\documentclass[a4paper, 11pt, reqno]{amsart}

\linespread{1.1}	

\usepackage[margin=2cm]{geometry}
\usepackage{graphicx}
\usepackage{amsmath}
\usepackage{amsthm,amsfonts,amssymb,mathrsfs,amscd,amstext,amsbsy}
\usepackage{epic,eepic}
\usepackage{enumerate}
\usepackage[all]{xy}
\usepackage{tikz}
\usepackage{color}
\usepackage{verbatim}

\usepackage[english]{babel}
\usepackage{enumitem}
\usepackage{hyperref}

\usepackage{tikz}

\usetikzlibrary{patterns}
\usetikzlibrary{decorations.markings}
\usetikzlibrary{arrows}
\usetikzlibrary{calc}

\tikzset{middlearrow/.style={
        decoration={markings,
            mark= at position 0.6 with {\arrow{#1}} ,
        },
        postaction={decorate}
    }
}

\tikzset{middleupuparrow/.style={
        decoration={markings,
            mark= at position 0.9 with {\arrow{#1}} ,
        },
        postaction={decorate}
    }
}
\tikzset{middleuparrow/.style={
        decoration={markings,
            mark= at position 0.65 with {\arrow{#1}} ,
        },
        postaction={decorate}
    }
}

\hypersetup{
    pdftitle=   {Sparse highly connected spanning subgraphs in dense directed graphs},
   pdfauthor=  {Dong Yeap Kang}
}

\makeatletter
\def\blfootnote{\gdef\@thefnmark{}\@footnotetext}
\makeatother
\newtheorem{THM}{Theorem}[section]
\newtheorem*{THM*}{Theorem~\ref{main}}
\newtheorem{LEM}[THM]{Lemma}
\newtheorem{OBS}[THM]{Observation}

\newtheorem{COR}[THM]{Corollary}
\newtheorem{PROP}[THM]{Proposition}
\newtheorem{CONJ}[THM]{Conjecture}
\newtheorem{CLAIM}{Claim}

\theoremstyle{remark}

\newtheorem*{REM}{Remark}

\theoremstyle{definition}
\newtheorem{DEFN}[THM]{Definition}

\numberwithin{equation}{section}


\title{Sparse highly connected spanning subgraphs in dense directed graphs}
\author{Dong Yeap Kang}
\address{Department of Mathematical Sciences, KAIST, 291 Daehak-ro
  Yuseong-gu Daejeon, 34141 South Korea}
\thanks{Supported by the National Research Foundation of Korea (NRF) grant funded by the Korea government (MSIT) (No. NRF-2017R1A2B4005020) and also by TJ Park Science Fellowship of POSCO TJ Park Foundation.}
\email{dyk90@kaist.ac.kr}

\date{\today}

\begin{document}
\begin{abstract}
Mader proved that every strongly $k$-connected $n$-vertex digraph contains a strongly $k$-connected spanning subgraph with at most $2kn - 2k^2$ edges, where the equality holds for the complete bipartite digraph ${DK}_{k,n-k}$. For dense strongly $k$-connected digraphs, this upper bound can be significantly improved. More precisely, we prove that every strongly $k$-connected $n$-vertex digraph $D$ contains a strongly $k$-connected spanning subgraph with at most $kn + 800k(k+\overline{\Delta}(D))$ edges, where $\overline{\Delta}(D)$ denotes the maximum degree of the complement of the underlying undirected graph of a digraph $D$. Here, the additional term $800k(k+\overline{\Delta}(D))$ is tight up to multiplicative and additive constants. As a corollary, this implies that every strongly $k$-connected $n$-vertex semicomplete digraph contains a strongly $k$-connected spanning subgraph with at most $kn + 800k^2$ edges, which is essentially optimal since $800k^2$ cannot be reduced to the number less than $k(k-1)/2$. 

We also prove an analogous result for strongly $k$-arc-connected directed multigraphs. Both proofs yield polynomial-time algorithms.\\

\textup{2010} \textit{Mathematics Subject Classification}:\:\:Primary: 05C20; Secondary: 05C40
\end{abstract}

\maketitle

\section{Introduction}\label{sec:intro}

Given a strongly connected digraph, what is the minimum number of edges of a strongly connected spanning subgraph? This minimum spanning strongly connected subgraph problem (or {\em MSSS}) is NP-hard, since it generalises the Hamiltonian cycle problem. The problem is closely related to both extremal graph theory and combinatorial optimization in perspective of studying the properties of extremal graphs and algorithmic aspects, and especially to industry, in order to build well-connected road systems with minimal cost. Even though the problem is NP-hard, it is known that the problem is polynomial-time solvable for various classes of digraphs~\cite{bang2003, bang2001}, and there are algorithms that approximate the minimum number of edges of a strongly connected spanning subgraph~\cite{bang2004spanning, vetta2001}.

One of the natural generalisations of the MSSS problem is the problem of determining the minimum number of edges in a strongly $k$-connected (or $k$-arc-connected) spanning subgraph of a strongly $k$-connected (or $k$-arc-connected, respectively) digraph. Even though the problem is known to be NP-hard~\cite{GJ1979}, there are algorithms that approximate the minimum number of edges of a strongly $k$-connected (or $k$-arc-connected) spanning subgraph~\cite{cheriyan2000}. For more on algorithmic aspects of both problems and their variants, the readers are referred to~\cite{bang2009problems},~\cite[Chapter 12]{bang2008digraphs} and the recent survey~\cite{bang2018} on tournaments and semicomplete digraphs.

We investigate an upper bound of the minimum number of edges in a strongly $k$-connected spanning subgraph and a strongly $k$-arc-connected spanning subgraph. The following are well-known results for general digraphs and directed multigraphs.

\begin{itemize} 
\item[$(\rm 1)$] (Mader~\cite{mader1985}) For integers $k \geq 1$ and $n \geq 4k+3$, every strongly $k$-connected $n$-vertex digraph contains a strongly $k$-connected spanning subgraph with at most $2k(n-k)$ edges.

\item[$(\rm 2)$] (Dalmazzo~\cite{dalmazzo1977}) For integers $k,n \geq 1$, every strongly $k$-arc-connected $n$-vertex directed multigraph contains a strongly $k$-arc-connected spanning subgraph with at most $2k(n-1)$ edges.

\item[$(\rm 3)$] (Berg and Jord\'{a}n~\cite{berg2005}) There exists a function $h(k)$ such that for integers $k \geq 1$ and $n \geq h(k)$, every strongly $k$-arc-connected $n$-vertex digraph contains a strongly $k$-arc-connected spanning subgraph with at most $2k(n-k)$ edges.
\end{itemize}

\begin{figure}[h]
\centering
\begin{tikzpicture}[scale=0.8]

\draw[fill=none] (0,0) ellipse [x radius=1,y radius=3];
\draw[fill=none] (4,0) ellipse [x radius=1,y radius=3];

\def \n {4}

\foreach \s in {1,...,\n}{
  \filldraw[fill=black] (0, -3 + \s*1.2) circle (2pt);
}

\foreach \s in {1,...,\n}{
  \filldraw[fill=black] (4, -3 + \s*1.2) circle (2pt);
}

\foreach \i in {1,...,\n}{
	\foreach \j in {1,...,\n}{
		\draw[middleuparrow={latex}] (0, -3 + \i*1.2) to[bend left=5] (4, -3 + \j*1.2) ;
		\draw[middleuparrow={latex}] (4, -3 + \i*1.2) to[bend left=5] (0, -3 + \j*1.2) ;
	}
}

\filldraw[fill=black] (11, 3) circle (2pt);
\filldraw[fill=black] (9, 0) circle (2pt);
\filldraw[fill=black] (13, 0) circle (2pt);
\filldraw[fill=black] (8, -3) circle (2pt);
\filldraw[fill=black] (10, -3) circle (2pt);
\filldraw[fill=black] (12, -3) circle (2pt);
\filldraw[fill=black] (14, -3) circle (2pt);

\draw[middleuparrow={latex}] (11,3) to[bend left=10] (9,0) ;
\draw[middleuparrow={latex}] (9,0) to[bend left=10] (11,3) ;
\draw[middleuparrow={latex}] (11,3) to[bend left=20] (9,0) ;
\draw[middleuparrow={latex}] (9,0) to[bend left=20] (11,3) ;

\draw[middleuparrow={latex}] (11,3) to[bend left=10] (13,0) ;
\draw[middleuparrow={latex}] (13,0) to[bend left=10] (11,3) ;
\draw[middleuparrow={latex}] (11,3) to[bend left=20] (13,0) ;
\draw[middleuparrow={latex}] (13,0) to[bend left=20] (11,3) ;

\draw[middleuparrow={latex}] (9,0) to[bend left=10] (8,-3) ;
\draw[middleuparrow={latex}] (8,-3) to[bend left=10] (9,0) ;
\draw[middleuparrow={latex}] (9,0) to[bend left=20] (8,-3) ;
\draw[middleuparrow={latex}] (8,-3) to[bend left=20] (9,0) ;

\draw[middleuparrow={latex}] (9,0) to[bend left=10] (10,-3) ;
\draw[middleuparrow={latex}] (10,-3) to[bend left=10] (9,0) ;
\draw[middleuparrow={latex}] (9,0) to[bend left=20] (10,-3) ;
\draw[middleuparrow={latex}] (10,-3) to[bend left=20] (9,0) ;

\draw[middleuparrow={latex}] (13,0) to[bend left=10] (12,-3) ;
\draw[middleuparrow={latex}] (12,-3) to[bend left=10] (13,0) ;
\draw[middleuparrow={latex}] (13,0) to[bend left=20] (12,-3) ;
\draw[middleuparrow={latex}] (12,-3) to[bend left=20] (13,0) ;

\draw[middleuparrow={latex}] (13,0) to[bend left=10] (14,-3) ;
\draw[middleuparrow={latex}] (14,-3) to[bend left=10] (13,0) ;
\draw[middleuparrow={latex}] (13,0) to[bend left=20] (14,-3) ;
\draw[middleuparrow={latex}] (14,-3) to[bend left=20] (13,0) ;
\end{tikzpicture}
\caption{$DK_{4,4}$ and the directed multigraph obtained from the 7-vertex tree whose edges are replaced by 2 directed 2-cycles.}
\end{figure}
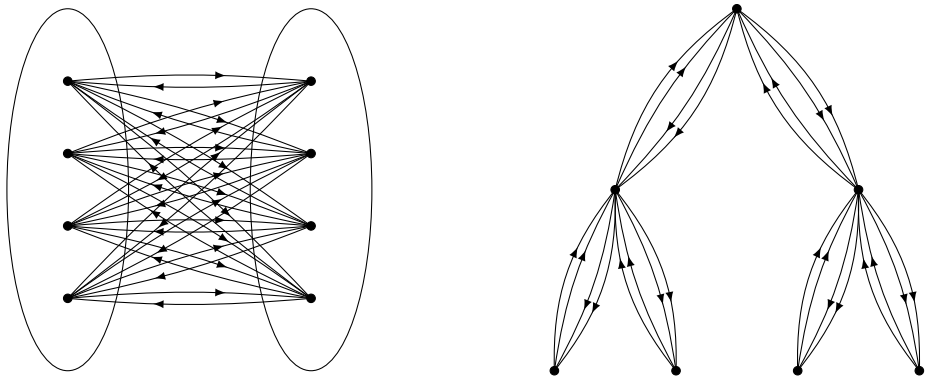

The upper bounds for these three cases are best possible; the digraph ${DK}_{k,n-k}$ obtained from $K_{k,n-k}$\footnote{An undirected graph $K_{k,n-k}$ is a complete bipartite graph with two independent sets of  size $k$ and size $n-k$, respectively.} by replacing each edge with a directed 2-cycle shows that the upper bounds given in $(\rm 1)$ and $(\rm 3)$ are tight, and a directed multigraph obtained from an $n$-vertex tree by replacing each edge with $k$ directed 2-cycles shows that $(\rm 2)$ cannot be improved.


Nevertheless, one may ask whether those upper bounds given in $(\rm 1)$--$(\rm 3)$ can be improved for dense digraphs, because all of these extremal examples are sparse. As a starting point, Bang-Jensen, Huang, and Yeo~\cite{bang2004spanning} proved the following result that improves the result of Berg and Jord\'{a}n for tournaments. 

\begin{THM}[Bang-Jensen, Huang, and Yeo~\cite{bang2004spanning}]\label{thm:old1}
For all integers $k,n \geq 1$, every strongly $k$-arc-connected $n$-vertex tournament contains a strongly $k$-arc-connected spanning subgraph with at most $kn + 136k^2$ edges.
\end{THM}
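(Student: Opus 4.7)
The strategy is to begin with $T$ itself and greedily delete arcs while preserving strong $k$-arc-connectivity, producing a \emph{minimal} $k$-arc-strong spanning subgraph $D\subseteq T$ in which every arc is \emph{critical}: for each $e=uv\in D$ there is a set $S_e\subsetneq V(T)$ with $u\in S_e$, $v\notin S_e$, and $|E_D(S_e, V(T)\setminus S_e)|=k$. The aim is to show that any such minimal $D$ has at most $kn+136k^2$ arcs. The easy regime is $n\le 17k$, where a direct computation gives $|E(T)|=\binom{n}{2}\le kn+136k^2$, so $T$ itself works; I may therefore assume $n>17k$.

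\textbf{Degree bookkeeping.} Because $D$ is $k$-arc-strong, every vertex $v$ satisfies $d_D^+(v),d_D^-(v)\ge k$. Setting $\Delta(v):=(d_D^+(v)-k)+(d_D^-(v)-k)\ge 0$, one has $|E(D)|=kn+\tfrac12\sum_v \Delta(v)$, so it suffices to prove $\sum_v\Delta(v)\le 272 k^2$.

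\textbf{Tournament constraint on critical cuts.} If $e=uv$ is critical with tight cut $(S_e, V(T)\setminus S_e)$, then $D$ has exactly $k$ arcs crossing forward and, by $k$-arc-strongness, at least $k$ arcs crossing backward. Since $T$ is a tournament, the $|S_e|\cdot|V(T)\setminus S_e|$ arcs of $T$ between the two sides partition by direction, and the forward direction in $T$ admits at most $k$ arcs (those that survive in $D$). A counting argument then forces $\min(|S_e|,|V(T)\setminus S_e|)=O(k)$ once $n$ is much larger than $k$. This is the tournament-specific input that allows one to beat Mader's general $2kn$ bound.

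\textbf{Core claim and main obstacle.} I aim to prove that in a minimal $k$-arc-strong subgraph $D$ of a large tournament, the excess set $L:=\{v:\Delta(v)>0\}$ is contained in a union of small boundary regions and the total excess is $O(k^2)$. The plan is: (i) for each vertex $v$ with $d_D^+(v)>k$ and each out-arc $uv$ of $v$ in $D$, produce the tight critical cut $S_e$; (ii) uncross the resulting family of $k$-cuts using submodularity of the cut function to obtain a laminar family; (iii) invoke the tournament constraint to bound both the depth and the branching of this family; (iv) deduce an arc of $D$ that may be removed without losing $k$-arc-strongness as soon as $\sum_v\Delta(v)$ exceeds $272k^2$, contradicting minimality. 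The hardest step is (ii)--(iv): uncrossing itself is standard but does not by itself bound the number of distinct critical cuts, and one needs the tournament hypothesis (through step (iii)) to convert a controlled laminar family into an $O(k^2)$ bound on the total excess and then to extract a removable arc at a vertex of large excess. I expect the explicit constant $136$ to emerge from tuning the small-case threshold ($n\le 17k$), the laminar counting in (iii), and the slack in the uncrossing of (ii).
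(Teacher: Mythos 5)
Your reduction to an inclusion-minimal subgraph is the fatal step: it is simply not true that every minimally strongly $k$-arc-connected spanning subgraph of a $k$-arc-connected tournament has at most $kn+O(k^2)$ arcs, so no amount of uncrossing can rescue the plan. Already for $k=1$, take vertices $v,a_1,b_1,\dots,a_m,b_m$ and let $D$ consist of the arcs $v a_i$, $a_i b_i$, $b_i v$ for all $i$; complete the orientation arbitrarily to a tournament $T$. Then $D$ is strongly connected, every arc of $D$ is critical (each $a_i$ and $b_i$ has in-degree $1$, and $b_i v$ is the only way for $a_i,b_i$ to reach $v$), and a greedy deletion starting from $T$ can legitimately terminate at $D$ since every intermediate graph contains $D$. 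Yet $|E(D)|\approx \tfrac32 n \gg n+136$. So the statement ``any minimal $D$ has at most $kn+136k^2$ arcs'' that your whole argument targets is false. A second, independent problem is your ``tournament constraint on critical cuts'': the cut $S_e$ is tight in $D$, not in $T$, so $T$ may have arbitrarily many forward arcs across it that $D$ simply did not retain, and nothing forces $\min(|S_e|,|V\setminus S_e|)=O(k)$. For instance, if $D$ is a Hamiltonian cycle ($k=1$), every cut along a segment of the cycle is tight in $D$ while both sides can have size $n/2$. Finally, even setting these aside, steps (ii)--(iv) are acknowledged to be missing, so what you have is a programme rather than a proof.

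The actual route (both in Bang-Jensen--Huang--Yeo and in this paper's generalization, Theorem~\ref{thm:main}/Corollary~\ref{cor:tournaments}) is constructive rather than deletion-based: one first builds a spanning subgraph with minimum in- and out-degree at least $k$ using only $kn+O(k^2)$ arcs (a degree/flow argument, here packaged as the sparse linkage structure of Proposition~\ref{prop:order}), and then adds $O(k^2)$ further arcs chosen via small dominating structures (transitive subtournaments that in-/out-dominate almost all vertices; in this paper, trios, escapers, hubs and absorbers, combined with $k$ Menger paths) to certify strong $k$-arc-connectivity of the union. The upper bound is obtained by accounting for what is added, not by bounding what survives an arbitrary minimalization; your counterexample above is precisely why the latter cannot give $kn+O(k^2)$.
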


They also proved that the number $136k^2$ of additional edges cannot be reduced to the number less than $\frac{k(k-1)}{2}$, so the result is essentially best possible. In 2009, Bang-Jensen~\cite{bang2009problems} asked whether there is a function $g(k)$ such that every strongly $k$-connected $n$-vertex tournament contains a strongly $k$-connected spanning subgraph with at most $kn + g(k)$ edges. Recently, Kim, Kim, Suh and the author~\cite{kang2016sparse} answered the question affirmatively.

\begin{THM}[Kang, Kim, Kim, and Suh~\cite{kang2016sparse}]\label{thm:old2}
For all integers $k,n \geq 1$, every strongly $k$-connected $n$-vertex tournament contains a strongly $k$-connected spanning subgraph with at most $kn + 750 k^2 \log_2 (k+1)$ edges.
\end{THM}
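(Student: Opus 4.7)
The plan is to build the spanning subgraph in two stages. In the first stage, I select a sparse ``backbone'' $D_{0}\subseteq T$ with about $kn$ arcs that already satisfies the necessary minimum in-degree and out-degree conditions. In the second stage, I add at most $O(k^{2}\log k)$ further arcs of $T$ to turn $D_{0}$ into a strongly $k$-connected spanning subdigraph.

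For the backbone, note that $T$ is very dense (underlying graph $K_{n}$) and strongly $k$-connected, so every vertex of $T$ has in-degree and out-degree at least $k$. I would select, for every vertex $v$, exactly $k$ arcs of $T$ into $v$ and exactly $k$ arcs of $T$ out of $v$, producing a spanning subdigraph with $kn$ arcs and minimum semi-degrees at least $k$. Feasibility of such a consistent selection is a routine Hall/flow-type degree-prescribed subdigraph argument, working because the tournament has total degree $n-1\gg 2k$ at every vertex (small $n$ is handled separately, possibly by taking the entire tournament).

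For the augmentation, I would proceed by repeatedly detecting and killing \emph{$(k-1)$-separations} of the current subdigraph $D$, i.e.\ triples $(S,A,B)$ with $|S|<k$, $V\setminus S=A\sqcup B$, and no arc of $D$ from $A$ to $B$. Since $T$ is strongly $k$-connected, $T-S$ must contain arcs from $A$ to $B$, so any such separation can be repaired by adding an arc of $T$. The naive one-at-a-time repair however gives only an $O(kn)$ bound, so to extract the additive $O(k^{2}\log(k+1))$ one needs a more global scheme. The natural device is induction on $k$ with a doubling step: use the inductive hypothesis to obtain a strongly $\lceil k/2 \rceil$-connected spanning subdigraph of $T$, then upgrade it to strongly $k$-connected by adding a second, nearly arc-disjoint, strongly $\lfloor k/2 \rfloor$-connected spanning subdigraph together with a small collection of ``bridging'' arcs killing every remaining $(k-1)$-separator of the union. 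The base case $k=1$ is Camion's theorem: any strongly connected tournament contains a Hamilton cycle, with $n$ arcs. The $\log_{2}(k+1)$ factor arises from the depth of this recursion.

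The central technical obstacle is the \emph{bridging lemma}: given a subdigraph $D'\subseteq T$ that is strongly $\lceil k/2 \rceil$-connected together with a nearly arc-disjoint strongly $\lfloor k/2\rfloor$-connected spanning subdigraph, one must exhibit $O(k^{2})$ arcs of $T$ whose addition destroys every $(k-1)$-separator of the union. The difficulty is that a priori the $(k-1)$-separators can look very different from the $(\lceil k/2\rceil-1)$-separators, and they can be numerous; one must exploit the tournament structure (in particular, that across any vertex separation of $T$ one of the two crossing arc-sets is large) and select bridging arcs globally so that a single packet kills many separators at once. Once the bridging lemma is in place, the stated bound follows by unwinding the recursion, and the constant $750$ is the accumulated multiplicative loss.
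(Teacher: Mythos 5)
Your augmentation scheme has a genuine gap exactly at the point you flag as the ``central technical obstacle,'' and the reductions around it do not hold up either. Vertex connectivity is not additive over (nearly) arc-disjoint unions: a single set $S$ with $|S|=k-1$ can simultaneously disconnect a strongly $\lceil k/2\rceil$-connected half and a strongly $\lfloor k/2\rfloor$-connected half, so the $(k-1)$-separations of the union bear no controlled relation to separations of the halves; they can be numerous and unstructured, and you give no argument that $O(k^{2})$ arcs of $T$ destroy them all. That bridging lemma is essentially the whole theorem (the entire content of Bang-Jensen's question is that an additive term depending only on $k$ suffices), so the proposal assumes what must be proved. Moreover the recursion is not well-founded as stated: to obtain the second half nearly arc-disjoint from the first you would have to apply the induction hypothesis to $T$ minus the arcs of the first half, which is no longer a tournament and need not be strongly $\lfloor k/2\rfloor$-connected. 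Two further inaccuracies: a spanning subdigraph with exactly $kn$ arcs and $\delta^{+},\delta^{-}\geq k$ would be $k$-regular, and the tournament $T_{n_1,n_2,k}$ of Section~\ref{sec:lowerbdd} shows this need not exist --- any such subgraph there has at least $kn+\frac{k(k-1)}{2}$ arcs (the correct backbone bound, $kn+O(k^{2})$, is the Bang-Jensen--Huang--Yeo flow argument recalled in Section~\ref{sec:conclude}); and even granting the bridging lemma, the cost recurrence $g(k)=g(\lceil k/2\rceil)+g(\lfloor k/2\rfloor)+O(k^{2})$ solves to $O(k^{2})$, so the $\log_{2}(k+1)$ factor would not arise from recursion depth as you claim.

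For comparison, the cited proof proceeds along entirely different lines, close to the sketch in Section~\ref{sec:proofsketch} of this paper: one finds $O(k)$ small transitive in-dominating and out-dominating structures, links their sinks and sources by $k$ disjoint paths via Menger's theorem, and handles all remaining vertices through a sparse linkage structure (a $(\sigma,k,t)$-good subgraph as in Proposition~\ref{prop:order}) together with fans into the dominating sets; the $\log_{2}(k+1)$ in the cited bound reflects the size of the dominating structures used there, and the present paper's refinement (size-$5$ dominators plus escapers, hubs and absorbers) is precisely what removes that logarithmic factor.
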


In particular, they answered the question of Bang-Jensen with $g(k) = 750 k^2 \log_2 (k+1)$. Since an example of Bang-Jensen, Huang, and Yeo~\cite{bang2004spanning} shows that $g(k) \geq \frac{k(k-1)}{2}$, there is a gap between the lower bound $\frac{k(k-1)}{2}$ and the upper bound $750 k^2 \log_2 (k+1)$ of $g(k)$. We close this gap by showing that $g(k) = \Theta(k^2)$ and generalise both Theorems~\ref{thm:old1} and~\ref{thm:old2} to a larger class of directed digraphs and directed multigraphs, respectively.

Before stating the results, let us begin with some terminology. Let $UG(D)$ be an \emph{underlying graph} of a directed multigraph $D$, a simple undirected graph obtained from $D$ by removing orientations of edges and multiple edges. Let $\overline{\Delta}(D)$ be the maximum degree of the complement of $UG(D)$, which is equal to $\max_{v \in V(D)}\lvert \left \{w \in V(D) \setminus \left \{v \right \} : (v,w),(w,v) \notin E(D) \right \} \rvert$. A directed multigraph $D$ is \emph{semicomplete} if $\overline{\Delta}(D) = 0$.

Bang-Jensen, Huang, and Yeo~\cite[Theorem 8.3]{bang2004spanning} proved that every strongly connected digraph $D$ contains a strongly connected spanning subgraph with at most $n + \overline{\Delta}(D)$ edges. We generalise this to strongly $k$-connected digraphs and strongly $k$-arc-connected directed multigraphs as follows.

\begin{THM}\label{thm:main} For integers $k,n \geq 1$, the following hold.
\begin{itemize} 
\item[$(\rm 1)$] Every strongly $k$-connected $n$-vertex \emph{digraph} $D$ contains a strongly $k$-connected spanning subgraph with at most $kn + 800 k \overline{\Delta}(D) + 800 k^2$ edges.

\item[$(\rm 2)$] Every strongly $k$-arc-connected $n$-vertex \emph{directed multigraph} $D$ contains a strongly $k$-arc-connected spanning subgraph with at most $kn + 670 k \overline{\Delta}(D) + 670 k^2$ edges. 
\end{itemize}
\end{THM}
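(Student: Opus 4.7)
I would prove (1) and (2) in parallel, exploiting that $D$ is ``nearly semicomplete'': every vertex has at most $d := \overline{\Delta}(D)$ non-neighbors in the underlying graph. The central observation is that a Hamilton path from $r$ to $r'$ in $D$ is simultaneously an out-branching rooted at $r$ and an in-branching rooted at $r'$, so finding $k$ nearly-edge-disjoint Hamilton-path-like spanning structures yields a $k$-arc-connected subgraph with about $kn$ edges, instead of the $2k(n-1)$ edges one gets by taking $k$ edge-disjoint out-branchings and $k$ edge-disjoint in-branchings separately via Edmonds' and Frank's branching theorems.

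The plan proceeds in three stages. \emph{Stage~1 (spanning structure).} Find $k$ pairwise nearly-edge-disjoint spanning structures in $D$, each a Hamilton path or a disjoint union of $O(k+d)$ paths, by extending Hamilton-packing results for tournaments to the nearly-semicomplete setting; I expect that the $\log_2(k+1)$ factor in Theorem~\ref{thm:old2} arises from an iterative halving step that a one-pass packing can avoid. This stage costs $kn + O(kd+k^2)$ edges and yields a digraph that contains $k$ edge-disjoint in-arborescences and $k$ edge-disjoint out-arborescences up to $O(kd+k^2)$ exceptional edges. \emph{Stage~2 (arc-connectivity repair).} Add at most $O(kd+k^2)$ further edges via a Mader-style augmentation to restore strong $k$-arc-connectivity, establishing~(2). \emph{Stage~3 (vertex-connectivity upgrade).} For~(1), add $O(k^2)$ more edges to protect against the arc-cuts that are not vertex-cuts, using the $k$ internally disjoint paths guaranteed by Menger's theorem applied in $D$ to each candidate separator. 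The gap between $670$ and $800$ in the two parts is accounted for by this final upgrade.

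\textbf{Main obstacle.} The decisive step is Stage~1: extending Hamilton-packing results from tournaments to nearly-semicomplete digraphs with the error controlled linearly in $\overline{\Delta}(D)$. A small number of ``bad'' vertices with many non-neighbors can obstruct Hamilton paths, so I would reserve an exception set $S \subseteq V(D)$ of size $O(d+k)$ on which the spanning structures are allowed to break into shorter segments, paying $O(k\abs{S})$ for reconnecting these segments — this matches the $O(kd+k^2)$ budget in the theorem statement. Guaranteeing that the reconnection does not destroy $k$-arc-connectivity (let alone $k$-vertex-connectivity) requires a careful global argument, most likely via a local exchange / splitting-off procedure that replaces ``problematic'' edges at vertices of $S$ with edges lying on arborescence paths routed through $V(D)\setminus S$. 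The delicate point is keeping both the edge budget and the connectivity invariant simultaneously through this exchange.
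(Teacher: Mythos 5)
Your proposal has genuine gaps, starting with the ``central observation.'' A union of $k$ edge-disjoint Hamilton paths from $r$ to $r'$ is indeed $k$ edge-disjoint out-branchings rooted at $r$ and $k$ edge-disjoint in-branchings rooted at $r'$, but this does \emph{not} give strong $k$-arc-connectivity: for a cut $\{S, V\setminus S\}$ with $r' \in S$ and $r \in V\setminus S$, each such path is forced to cross from $V\setminus S$ to $S$ but need never cross from $S$ to $V\setminus S$ (a transitive tournament has a Hamilton path yet is not even strongly connected). So the structures you would need are Hamilton cycles or carefully routed closed systems, and here Stage~1 collides with known lower bounds: packing $k$ edge-disjoint Hamilton cycles in tournaments requires connectivity $\Omega(k^2)$ (K\"uhn--Lapinskas--Osthus--Patel give a strongly $\frac{(k-1)^2}{4}$-connected tournament with no such packing), and Lemma~\ref{lem:lowerbdd2} of this paper shows that a strongly $k$-connected tournament may admit no spanning subgraph that is even approximately $k$-regular without $\Omega(k)$ exceptional vertices. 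Your hedge --- allowing each structure to break into $O(k+\overline{\Delta})$ segments with an exception set $S$ and then ``reconnecting'' --- pushes the entire difficulty into the reconnection step, which is exactly the problem being solved and for which you give no argument that both the edge budget and the connectivity invariant can be maintained.

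Stages~2 and~3 are also unsupported. Connectivity augmentation results in the Frank/Mader style allow adding \emph{arbitrary} new arcs; here you may only reinsert arcs of $D$, and there is no general guarantee that $O(k\overline{\Delta}+k^2)$ such arcs suffice to repair $k$-arc-connectivity of an arbitrary near-spanning structure. Likewise, upgrading a $k$-arc-connected spanning subgraph to a $k$-vertex-connected one by adding $O(k^2)$ edges is not a routine step and is not how the two parts relate: the paper proves (1) and (2) by two parallel constructions (vertex and arc versions of the same gadgets), not by an upgrade. For contrast, the paper's route avoids Hamilton structures entirely: it builds $O(k+\overline{\Delta})$ small dominating transitive subtournaments (5-in/outdominators), links their sinks and sources by $k$ disjoint Menger paths, and then uses sparse linkage structures ($(\sigma,k,t)$-good subgraphs with $\le k|U|+O(k\overline{\Delta})$ edges), escapers, hubs, and absorbers so that after deleting any $k-1$ vertices (or arcs) every vertex can reach and be reached from one of the surviving Menger paths; this is what yields $kn + O(k(k+\overline{\Delta}))$ under the bare hypothesis of strong $k$-(arc-)connectivity, where Hamilton packing is unavailable.
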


\begin{REM}~\
\begin{enumerate}
\item Theorem~\ref{thm:main} gives the better result for ``dense'' digraphs and directed multigraphs. Given any $0 < \varepsilon < 1$, Theorem~\ref{thm:main} $(\rm 1)$ implies that any strongly $k$-connected $n$-vertex digraph $D$ with $\overline{\Delta}(D) < (1-\varepsilon)n / 800$ has a strongly $k$-connected spanning subgraph of $D$ with at most $(2 - \varepsilon)kn + 800k^2$ edges, improving the result of Mader~\cite{mader1985} for these dense digraphs. Similarly, the result of Dalmazzo~\cite{dalmazzo1977} is also improved for strongly $k$-arc-connected $n$-vertex directed multigraphs with $\overline{\Delta}(D) < (1-\varepsilon)n / 670$.

\item Both additional terms $800 k(k+\overline{\Delta}(D))$ and $670 k(k + \overline{\Delta}(D))$ are optimal up to multiplicative and additive constants.  In Section~\ref{sec:lowerbdd}, it is proved that for all integers $k \geq 1$, $\overline{\Delta} \geq 0$ and $n \geq \max(5k+2 , \: 4k+\overline{\Delta}+3)$, there is a strongly $k$-connected $n$-vertex oriented graph $G$ with $\overline{\Delta}(G) \leq \overline{\Delta}$ such that every spanning subgraph $D$ with $\delta^+(D),\delta^-(D) \geq k$ contains at least $kn + \max \left ( \frac{k(k-1)}{2} , k \overline{\Delta} \right )$ edges. 
\end{enumerate}
\end{REM}

Note that the class of tournaments is a subclass of the class of semicomplete digraphs. Theorem~\ref{thm:main} proves that $g(k) = O(k^2)$ suffices, which improves Theorem~\ref{thm:old2} and provides a function that is asymptotically sharp for the question of Bang-Jensen. Moreover, Theorem~\ref{thm:main} extends Theorems~\ref{thm:old1} and~\ref{thm:old2} to semicomplete directed multigraphs.

\begin{COR}\label{cor:tournaments} For all integers $k,n \geq 1$, the following hold.
\begin{itemize} 
\item[$(\rm 1)$] Every strongly $k$-connected $n$-vertex semicomplete digraph $D$ contains a strongly $k$-connected spanning subgraph with at most $kn + 800k^2$ edges.

\item[$(\rm 2)$] Every strongly $k$-arc-connected $n$-vertex semicomplete directed multigraph $D$ contains a strongly $k$-arc-connected spanning subgraph with at most $kn + 670k^2$ edges.
\end{itemize}
\end{COR}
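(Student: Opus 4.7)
The plan is to derive Corollary \ref{cor:tournaments} as an immediate specialization of Theorem \ref{thm:main}. By the definition adopted in the introduction, a directed multigraph $D$ is semicomplete precisely when $\overline{\Delta}(D) = 0$, i.e.\ when for every pair of distinct vertices $u,v \in V(D)$ at least one of $(u,v)$, $(v,u)$ belongs to $E(D)$. Thus the hypothesis ``semicomplete'' for both parts of the corollary is exactly the numerical condition $\overline{\Delta}(D) = 0$.

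For part $(\rm 1)$, I would simply apply Theorem \ref{thm:main} $(\rm 1)$ to the given strongly $k$-connected $n$-vertex semicomplete digraph $D$: it produces a strongly $k$-connected spanning subgraph with at most
\[
kn + 800 k \overline{\Delta}(D) + 800 k^2 \;=\; kn + 800 k \cdot 0 + 800 k^2 \;=\; kn + 800 k^2
\]
edges, which is the claimed bound. For part $(\rm 2)$, the argument is identical: Theorem \ref{thm:main} $(\rm 2)$ applied to a strongly $k$-arc-connected $n$-vertex semicomplete directed multigraph $D$ delivers a strongly $k$-arc-connected spanning subgraph with at most $kn + 670 k \cdot 0 + 670 k^2 = kn + 670 k^2$ edges.

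No genuine obstacle arises, since Corollary \ref{cor:tournaments} is a direct instantiation of Theorem \ref{thm:main} at $\overline{\Delta}(D) = 0$; the only point to verify is the equivalence between being semicomplete and having $\overline{\Delta}(D) = 0$, which is built into the definition. All of the work already sits in Theorem \ref{thm:main}, whose proof is the substantive part of the paper.
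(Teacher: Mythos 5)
Your proposal is correct and matches the paper exactly: the corollary is stated as an immediate consequence of Theorem~\ref{thm:main}, obtained by observing that a semicomplete digraph or directed multigraph satisfies $\overline{\Delta}(D)=0$ and substituting this value into the bounds $kn+800k\overline{\Delta}(D)+800k^2$ and $kn+670k\overline{\Delta}(D)+670k^2$. Nothing further is needed.
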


One of the main ideas of the proof is the use of transitive subtournaments that dominate almost all vertices in order to link the vertices, which builds on the recent methods (see~\cite{kang2016sparse, kim2016bipartitions, kuhn2014proof, kuhn2016cycle, pokrovskiy2015highly, pokrovskiy2016edge}). Another main idea of the proof is called a \emph{sparse linkage structure}, which is introduced in~\cite{kang2016sparse} and will be discussed in Section~\ref{sec:prelim}. With some new ingredients, both ideas are extensively used in the proof of Theorem~\ref{thm:main}. 

The proof of Theorem~\ref{thm:main} is constructive so that there is a polynomial-time algorithm which, given a strongly $k$-connected digraph (strongly $k$-arc-connected directed multigraph) $D$ with $\overline{\Delta}(D) \leq \overline{\Delta}$, outputs a strongly $k$-connected (strongly $k$-arc-connected, respectively) spanning subgraph with at most $kn + 800k\overline{\Delta} + 800k^2$ ($kn + 670k\overline{\Delta} + 670k^2$, respectively) edges. Since every strongly $k$-arc-connected $n$-vertex directed multigraph has at least $kn$ edges, the algorithm approximates the minimum number of edges of a strongly $k$-connected (or strongly $k$-arc-connected) spanning subgraph of $G$ within an additive error $O(k(k+\overline{\Delta}))$.\\

{\bf Organization of the paper.} We introduce terminology and tools used in the proof in Section~\ref{sec:prelim}. We discuss a lower bound on the minimum number of edges in a strongly $k$-connected subgraph and a strongly $k$-arc-connected subgraph in Section~\ref{sec:lowerbdd}. We briefly sketch the proof of the main theorems in Section~\ref{sec:proofsketch}. Before the proof of the main results, we introduce some basic objects and notions for the construction of sparse highly connected subgraphs in Section~\ref{sec:object}. The main theorems are proved in Section~\ref{sec:proof}, and we discuss questions related to the main results in Section~\ref{sec:conclude}.


\section{Preliminaries}\label{sec:prelim}
\subsection{Basic notions and lemmas} 
We begin with some basic definitions.
\begin{itemize}
\item[$(\rm 1)$] {\bf Sets and orderings}. For any integer $N \geq 0$, let $[N]$ denote the set $\left \{1 , \dots , N \right \}$ if $N \geq 1$, $\emptyset$ otherwise. For any $m$-element finite set $S = \left \{s_1 , \dots , s_m \right \}$, a \emph{linear ordering} $\sigma = (s_1 , \dots , s_m)$ is a map from $[m]$ to $S$ such that $\sigma(i) := s_i$ for $1 \leq i \leq m$. For two integers $p$ and $q$, $\sigma(p,q) := \sigma(\left \{p,\dots,q \right \} \cap [m])$ if $p \leq q$, and $\emptyset$ otherwise.

\item[$\rm (2)$] {\bf Directed graphs, Directed multigraphs, Oriented graphs}. A \emph{directed graph} (or \emph{digraph}) $D$ is a pair $(V,E)$ with a finite set $V$ of vertices and a set of $E$ edges in $(V \times V) \setminus \left \{(v,v) \: : \: v \in V \right \}$. A \emph{directed multigraph} $D$ is a pair $(V,E)$ with a finite set $V$ of vertices and a multiset $E$ of edges in $(V \times V) \setminus \left \{(v,v) \: : \: v \in V \right \}$. For simplicity, $uv$ denotes any edge $(u,v) \in E(D)$ for $u,v \in V(D)$. For two directed multigraphs $D_1=(V_1,E_1)$ and $D_2=(V_2,E_2)$, its union $D_1 \cup D_2$ is a directed multigraph $(V_1 \cup V_2 , E_1 \cup E_2)$. For a set $S \subseteq V(D)$, $D[S]$ denotes the subgraph of $D$ induced by $S$. An \emph{underlying graph} $UG(D)$ of a directed multigraph $D$ is a simple undirected graph obtained from $D$ by removing its orientation and multiple edges.

An \emph{oriented graph} is a digraph obtained from an undirected graph by orienting each edge. An oriented graph $G$ is \emph{transitive} if $uv,vw \in E(G)$ then $uv \in E(G)$. For a vertex $v \in D$, a set $N_D^+(v)$ is the set of \emph{out-neighbours} of $v$, and $N_D^-(v)$ is the set of \emph{in-neighbours} of $v$. A set $\delta_D^{+}(v)$ is the multiset of edges out of $v$, and $\delta_D^{-}(v)$ is the multiset of edges into $v$. Let $d_D^+(v) := |\delta_D^+(v)|$ and $d_D^-(v) := |\delta_D^-(v)|$ be \emph{out-degree} and \emph{in-degree} of $v$, respectively. Let $\delta^+(D)$ and $\delta^-(D)$ be the \emph{minimum out-degree} and the \emph{minimum in-degree} of any vertex in $D$, respectively. For two sets $X,Y \subseteq V(D)$, let $E_D(X,Y)$ be the multiset of edges from $X$ to $Y$, and $e_D(X,Y) := |E_D(X,Y)|$. A vertex $v \in V(D)$ is a \emph{source} if the in-degree of $v$ is 0, and a vertex $v$ is a \emph{sink} if the out-degree of $v$ is 0. A vertex $w$ is a \emph{non-neighbour} of $v$ if $w$ is neither $v$ nor an in-neighbour of $v$ nor an out-neighbour of $v$. Let \emph{$\overline{\Delta}(D)$} be the maximum number of non-neighbours of any vertex in $D$, equivalently, the maximum degree of the complement of $UG(D)$. A digraph or a directed multigraph $D$ is \emph{semicomplete} if $\overline{\Delta}(D) = 0$, and a semicomplete oriented graph is called a \emph{tournament}. We frequently use the following fact that $\overline{\Delta}(D') \leq \overline{\Delta}(D)$ for every induced subgraph $D'$ of a multigraph $D$.

For any integer $k \geq 1$, a directed multigraph $D$ is \emph{$k$-regular} if for every $v \in V(D)$, $d_D^{+}(v) = d_D^{-}(v) = k$. A set $A \subseteq V(D)$ \emph{in-dominates} a vertex $v \in V(D)$ if $v \in A$ or there exists $a \in A$ with $va \in E(D)$. A set $B \subseteq V(D)$ \emph{out-dominates} a vertex $u \in V(D)$ if $u \in B$ or there exists $b \in B$ with $bu \in E(D)$.

\item[$(\rm 3)$] {\bf Paths and fans.} A \emph{path} $P = (v_1 , \dots , v_s)$ is a digraph $P$ with the set $V(P) := \left \{v_1 , \dots , v_s \right \}$ of $s$ distinct vertices and the set $E(P) := \left \{v_i v_{i+1} \colon 1 \leq i \leq s-1 \right \}$ of edges. The set of \emph{endvertices} of $P$ is $\left \{v_1 , v_s \right \}$, and the set ${\rm Int}(P)$ of \emph{internal vertices} is $V(P) \setminus \left \{v_1 , v_s \right \}$. A path $P = (v_1 , \dots , v_s)$ in a directed multigraph $D$ is \emph{minimal} if $v_i v_j \notin E(D)$ for $2 \leq i+1 < j \leq s$.

Let $k \geq 1$ be an integer and $S \subseteq V(D)$. For a vertex $v \in V(D) \setminus S$, a \emph{$k$-fan} from $v$ to $S$ (from $S$ to $v$) is a collection of $k$ paths from $v$ to vertices in $S$ (from vertices in $S$ to $v$, respectively) such that each of them contains exactly one vertex in $S$, and any two of them have only the vertex $v$ in common. A \emph{$k$-arc-fan} from $v$ to $S$ (from $S$ to $v$) is a collection of $k$ paths from $v$ to vertices in $S$ (from vertices in $S$ to $v$, respectively) such that each of them contains exactly one vertex in $S$, and any two of them have no edge in common. 

\item[$(\rm 4)$] {\bf Connectivity}. A directed multigraph $D$ is \emph{strongly connected} if for every $u,v \in V(D)$, there is a path from $u$ to $v$. For any integer $k \geq 1$, a directed graph $D$ is \emph{strongly $k$-connected} if $|V(D)| \geq k+1$ and for every $S \subseteq V(D)$ of $|S| \leq k-1$, the directed multigraph $D-S$ is strongly connected. A directed multigraph $D$ is \emph{strongly $k$-arc-connected} if for every $T \subseteq E(D)$ with $|T| \leq k-1$, the directed multigraph $D-T$ remains strongly connected. A directed multigraph $D$ is \emph{minimally strongly $k$-connected} (\emph{minimally strongly $k$-arc-connected}) if $D$ is strongly $k$-connected (strongly $k$-arc-connected, respectively) and $D- \left \{e \right \}$ is not strongly $k$-connected (strongly $k$-arc-connected, respectively) for every $e \in E(D)$.
\end{itemize}

We often use the following well-known facts easily deduced from Menger's theorem.
\begin{PROP}\label{prop:fan}
Let $k \geq 1$ be an integer and $D$ be a directed multigraph and $\emptyset \ne S \subseteq V(D)$.
\begin{itemize}
\item[$(\rm 1)$] If $D$ is strongly $k$-connected and $|S| \geq k$, then for every $v \in V(D) \setminus S$, there are a $k$-fan from $v$ to $S$ and a $k$-fan from $S$ to $v$.
\item[$(\rm 2)$] If $D$ is strongly $k$-arc-connected, then for every $v \in V(D) \setminus S$, there are a $k$-arc-fan from $v$ to $S$ and a $k$-arc-fan from $S$ to $v$.
\item[$(\rm 3)$] If $D$ is strongly $k$-connected and $a_1 , \dots , a_k , b_1 , \dots , b_k \in V(D)$ are $2k$ distinct vertices of $D$, then there are $k$ vertex-disjoint paths $P_1 , \dots , P_k$ such that there is a permutation $\sigma : [k] \to [k]$ and for $i \in [k]$, $P_i$ is a path from $a_i$ to $b_{\sigma(i)}$.
\item[$(\rm 4)$] If $D$ is strongly $k$-arc-connected and $a_1 , \dots , a_k , b_1 , \dots , b_k \in V(D)$ are $2k$ distinct vertices of $D$, then there are $k$ edge-disjoint paths $P_1 , \dots , P_k$ such that there is a permutation $\sigma : [k] \to [k]$ and for $i \in [k]$, $P_i$ is a path from $a_i$ to $b_{\sigma(i)}$.
\end{itemize}
\end{PROP}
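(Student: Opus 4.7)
The plan is to reduce each of the four statements to Menger's theorem, applying its set version with $v$ as source and $S$ as target (or vice versa) for (1) and (2), and for (3) and (4) passing to an auxiliary digraph obtained by attaching a super-source $a^*$ and super-sink $b^*$ to $D$.

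For (1), Menger's theorem (vertex form, set version) gives that the maximum number of internally vertex-disjoint $(v,S)$-paths equals the minimum size of a separator $X\subseteq V(D)\setminus\{v\}$ such that $D-X$ has no path from $v$ to $S\setminus X$. If some $X$ had $|X|\le k-1$, then $|S|\ge k>|X|$ would guarantee some $s\in S\setminus X$, and strong $k$-connectivity of $D$ would make $D-X$ strongly connected, so a $(v,s)$-path would exist in $D-X$, contradicting the separator property. Hence $k$ internally vertex-disjoint $(v,S)$-paths exist, and truncating each at its first vertex of $S$ yields the desired $k$-fan; the reverse fans follow by the obvious dual (swap the roles of in- and out-neighbourhoods). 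For (2), the arc form of Menger's theorem (set version) applies the same way: an arc-cut $F$ with $|F|<k$ would force $D-F$ to disconnect $v$ from $S$, but strong $k$-arc-connectivity of $D$ keeps $D-F$ strongly connected, a contradiction, so $k$ arc-disjoint $(v,S)$-paths exist and truncation yields the $k$-arc-fan.

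For (3) and (4), I would form $D'$ by adding a new vertex $a^*$ with arcs $a^*a_i$ for each $i\in[k]$ and a new vertex $b^*$ with arcs $b_jb^*$ for each $j\in[k]$ (and in (4), $k$ parallel copies of each new arc, so that the new arcs cannot form the binding arc-cut), and establish $k$ internally vertex-disjoint (resp.\ arc-disjoint) $(a^*,b^*)$-paths in $D'$ by Menger. A separator of size $<k$ (resp.\ an arc-cut of size $<k$) must omit some $a_i$ and some $b_j$; strong $k$-connectivity (resp.\ strong $k$-arc-connectivity) of $D$ then supplies an $(a_i,b_j)$-path in $D-X$ (resp.\ in $D-(F\cap E(D))$) that lifts to an $(a^*,b^*)$-path in $D'$, a contradiction. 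Since the $k$ arcs out of $a^*$ are in bijection with the $a_i$'s and the $k$ arcs into $b^*$ with the $b_j$'s, the $k$ paths use each $a^*a_i$ and each $b_jb^*$ exactly once, and matching the initial arc with the terminal arc along each path reads off the required permutation $\sigma:[k]\to[k]$. The only real obstacle throughout is the routine bookkeeping that any Menger cut in the auxiliary digraph induces a cut of $D$ of size $<k$, which is exactly where the parallel-copies trick in (4) and the hypothesis $|S|\ge k$ in (1) are used; beyond that, the argument is a direct invocation of Menger's theorem.
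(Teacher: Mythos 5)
The paper itself gives no written proof of Proposition~\ref{prop:fan} (it is recorded as a collection of facts ``easily deduced from Menger's theorem''), and your argument is exactly that intended deduction. Parts $(1)$--$(3)$ are correct as you present them: for $(1)$ and $(2)$ the fan/cut version of Menger applies because any vertex set $X\subseteq V(D)\setminus\{v\}$ with $|X|\le k-1$ (resp.\ any arc set $F$ with $|F|\le k-1$) leaves $D-X$ (resp.\ $D-F$) strongly connected while, thanks to $|S|\ge k$ in $(1)$ and $S\ne\emptyset$ in $(2)$, some target in $S$ survives; truncating at the first $S$-vertex gives the fan. For $(3)$, internal disjointness of the $k$ paths from $a^*$ to $b^*$ forces their second vertices to be $k$ distinct $a_i$'s and their penultimate vertices to be $k$ distinct $b_j$'s, which is precisely where the permutation $\sigma$ comes from.

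The one step that would fail is the parenthetical device in $(4)$. If you add $k$ parallel copies of each new arc, then the arcs leaving $a^*$ are no longer in bijection with the $a_i$'s, and Menger may return two arc-disjoint $a^*$--$b^*$ paths that leave $a^*$ through two parallel copies of the \emph{same} arc $a^*a_1$; after deleting $a^*$ and $b^*$ you would have two paths starting at $a_1$ and none at $a_2$, so no permutation $\sigma$ is obtained and the statement of $(4)$ is not proved. The parallel copies are also unnecessary: with a single copy of each new arc, an arc-cut $F$ of $D'$ with $|F|\le k-1$ cannot contain all $k$ arcs out of $a^*$ nor all $k$ arcs into $b^*$, so some $a_i$ is reachable from $a^*$ and some $b_j$ reaches $b^*$ in $D'-F$; since $|F\cap E(D)|\le k-1$, strong $k$-arc-connectivity gives a path from $a_i$ to $b_j$ in $D$ avoiding the arcs of $F$, which lifts to an $a^*$--$b^*$ path in $D'-F$, a contradiction. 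Then the minimum cut has size at least $k$, Menger gives $k$ arc-disjoint $a^*$--$b^*$ paths, and arc-disjointness forces each arc $a^*a_i$ and each arc $b_jb^*$ to be used exactly once, so the permutation is read off exactly as in $(3)$. In short: drop the parallel-copies trick and your proof of $(4)$ is correct as well.
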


Now we prove the following elementary lemma, which extends~\cite[Lemma 2.1]{kang2016sparse} to dense directed multigraphs.
\begin{LEM}\label{lem:manydeg}
For integers $k \geq 1$, $n \geq 2$ , $\overline{\Delta} \geq 0$ with $n \geq k$, let $D$ be an $n$-vertex directed multigraph with $\overline{\Delta}(D) \leq \overline{\Delta}$. Then $D$ has $k$ vertices having at least $(n-k-\overline{\Delta})/2$ in-neighbours in $D$ and $k$ vertices having at least $(n-k-\overline{\Delta})/2$ out-neighbours in $D$.
\end{LEM}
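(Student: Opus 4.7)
The plan is to prove the in-neighbour statement by a greedy iterative argument; the out-neighbour statement follows by a symmetric argument (or by applying the in-neighbour case to the reverse digraph). Set $D_1 := D$, and for $i = 1, \dots, k$ define $D_i := D - \{u_1, \dots, u_{i-1}\}$, where $u_i$ is chosen (in this order) as a vertex of $D_i$ maximizing $|N^-_{D_i}(v)|$. The goal is to show $|N^-_{D_i}(u_i)| \geq (n-i-\overline{\Delta})/2$, which by the monotonicity $N^-_{D_i}(u_i) \subseteq N^-_D(u_i)$ and $i \leq k$ yields $|N^-_D(u_i)| \geq (n-k-\overline{\Delta})/2$, as required.

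The one substantive step is an averaging argument on the underlying graph. Let $n_i := |V(D_i)| = n - i + 1$. Since $\overline{\Delta}(D_i) \leq \overline{\Delta}(D) \leq \overline{\Delta}$, every vertex of $UG(D_i)$ has at most $\overline{\Delta}$ non-neighbours, hence at least $n_i - 1 - \overline{\Delta}$ neighbours, so by the handshake lemma
\[
|E(UG(D_i))| \;\geq\; \frac{n_i(n_i - 1 - \overline{\Delta})}{2}.
\]
Each edge $\{u,v\}$ of $UG(D_i)$ arises from at least one of $(u,v),(v,u)$ in $D_i$, so the simple underlying digraph of $D_i$ has at least as many edges as $UG(D_i)$; counting those edges by their heads gives
\[
\sum_{v \in V(D_i)} |N^-_{D_i}(v)| \;\geq\; |E(UG(D_i))| \;\geq\; \frac{n_i(n_i - 1 - \overline{\Delta})}{2}.
\]
By averaging, some $v \in V(D_i)$ satisfies $|N^-_{D_i}(v)| \geq (n_i - 1 - \overline{\Delta})/2 = (n - i - \overline{\Delta})/2$, and our maximizer $u_i$ certainly achieves this.

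This produces distinct vertices $u_1, \dots, u_k$ (when $n_i \geq 1$, i.e. $i \leq n$, which holds since $n \geq k$) each with $|N^-_D(u_i)| \geq (n-k-\overline{\Delta})/2$. Repeating the procedure with out-neighbours in place of in-neighbours (the same underlying-graph counting works verbatim, since $\sum_v |N^+_{D_i}(v)|$ also equals the edge count of the simple underlying digraph of $D_i$) yields $k$ vertices with at least $(n-k-\overline{\Delta})/2$ out-neighbours. No step is a serious obstacle; the only thing to watch is that the bound $n_i - 1 - \overline{\Delta}$ may be non-positive in small cases, but then the conclusion $|N^-_{D_i}(u_i)| \geq (n-i-\overline{\Delta})/2$ is trivial since in-neighbour counts are non-negative.
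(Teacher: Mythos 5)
Your proof is correct and rests on the same idea as the paper's: delete at most $k-1$ vertices, use $\overline{\Delta}$-monotonicity and the handshake bound on the underlying graph to average, and note that degenerate (negative) bounds are trivial. The only cosmetic difference is that you peel off a maximizer one vertex at a time, whereas the paper removes the $k-1$ vertices of largest in-neighbourhood at once and applies the averaging step a single time; this changes nothing substantive.
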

\begin{proof}
Let $x_1 , \dots , x_k$ be $k$ vertices such that $|N_D^-(x_1)| \geq \dots \geq |N_D^-(x_k)|$ and $|N_D^-(x_i)| \geq |N_D^-(v)|$ for every $v \in V(D) \setminus \left \{x_1 , \dots , x_k \right \}$ and $1 \leq i \leq k$. Since $D' = D - \left \{x_1 , \dots , x_{k-1} \right \}$ contains $n-k+1$ vertices and $\overline{\Delta}(D') \leq \overline{\Delta}$,
$$\sum_{x \in V(D')}{|N_{D'}^{-}(x)|} = |E(D')| \geq |E(UG(D'))| \geq \frac{1}{2} |V(D')|(n-k-\overline{\Delta})$$ 
and there is $x \in V(D')$ such that $|N_{D'}^{-}(x)| \geq \frac{n-k-\overline{\Delta}}{2}$ since $|V(D')| \geq 1$. Therefore, for $1 \leq i \leq k$,
$$|N_{D}^-(x_i)| \geq |N_D^{-}(x_k)| \geq |N_{D}^{-}(x)| \geq |N_{D'}^{-}(x)| \geq \frac{n-k-\overline{\Delta}}{2}.$$

Similarly, there are $k$ vertices having at least $\frac{n-k-\overline{\Delta}}{2}$ out-neighbours in $D$.
\end{proof}

\subsection{Sparse linkage structures}
We need some notions introduced in~\cite[Section 3]{kang2016sparse}. For any $n$-vertex digraph $D$ and a linear ordering $\sigma = (v_1 , \dots , v_n)$ of $V(D)$, a digraph $D$ is \emph{$(\sigma, k, t)$-good} for positive integers $k$ and $t$, if the following hold.
\begin{itemize} 
\setlength{\itemindent}{.25in}
\item[$(a)$] If $v_i v_j \in E(D)$ for $1 \leq i,j \leq n$, then $i<j$.
\item[$(b)$] Every vertex $v_j$ for $1 \leq j \leq n-t$ has out-degree at least $k$ in $D$.
\item[$(c)$] Every vertex $v_j$ for $t+1 \leq j \leq n$ has in-degree at least $k$ in $D$.
\end{itemize}

The following lemma easily follows from the definition of $(\sigma,k,t)$-good digraphs. Note that (1) of the lemma follows by~\cite[Claim 3.1]{kang2016sparse}, and (2) is easily deduced from (1).

\begin{LEM}\label{lem:link}
For integers $n \geq 1$, $t \geq k \geq 1$ and a $(\sigma, k, t)$-good $n$-vertex digraph $D$, the following hold.
\begin{itemize} 
\item [$(\rm 1)$] Let $S \subseteq V(D)$ be a set of at most $k-1$ vertices. For every $u \in V(D)\setminus S$, there are vertices $v \in \sigma(1,t)$ and $w \in \sigma(n-t+1,n)$ such that $D-S$ contains a path from $v$ to $u$ and a path from $u$ to $w$.

\item [$(\rm 2)$] Let $F \subseteq E(D)$ be a set of at most $k-1$ edges. For every $u \in V(D)$, there are vertices $v \in \sigma(1,t)$ and $w \in \sigma(n-t+1,n)$ such that $D-F$ contains a path from $v$ to $u$ and a path from $u$ to $w$.
\end{itemize}
\end{LEM}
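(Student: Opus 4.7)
The plan is to prove both parts by a greedy walk along the linear ordering $\sigma$, using condition (a) to force indices to decrease (or increase) strictly along the walk, and conditions (b), (c) to guarantee that at each step there is a neighbour outside the forbidden set.

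For (1), I would construct the path from some $v \in \sigma(1,t)$ to $u$ as follows. Write $u = v_{i_0}$. If $i_0 \leq t$, take $v := u$ and the path is trivial. Otherwise $i_0 \geq t+1$, so condition (c) gives that $u$ has in-degree at least $k$ in $D$; since $|S| \leq k-1$, some in-neighbour $u_1 = v_{i_1}$ of $u$ lies outside $S$, and condition (a) forces $i_1 < i_0$. Iterating, as long as the current vertex $u_j = v_{i_j}$ satisfies $i_j \geq t+1$, it has at least $k$ in-edges, hence an in-neighbour $u_{j+1} \notin S$ with strictly smaller index. The strictly decreasing sequence $i_0 > i_1 > \cdots$ of positive integers must terminate at some $u_m = v_{i_m}$ with $i_m \leq t$, and reversing the walk yields a path in $D-S$ from $v := u_m \in \sigma(1,t)$ to $u$. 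Symmetrically, using condition (b) and stepping to out-neighbours outside $S$ with strictly increasing indices, we obtain a path from $u$ to some $w \in \sigma(n-t+1,n)$ in $D-S$.

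For (2), I would run the identical argument with $F$ playing the role of $S$. At any step $u_j = v_{i_j}$ with $i_j \geq t+1$, the in-degree of $u_j$ in $D$ is at least $k$, so at least one in-edge $u_{j+1} u_j$ lies outside $F$; by (a) the predecessor $u_{j+1}$ has strictly smaller index. The same termination argument produces a path from some $v \in \sigma(1,t)$ to $u$ in $D-F$, and symmetrically a path from $u$ to some $w \in \sigma(n-t+1,n)$.

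There is no substantial obstacle. The only points requiring care are the trivial base cases where $u$ already lies in $\sigma(1,t)$ or $\sigma(n-t+1,n)$, and the observation that the walk cannot revisit a vertex because its indices are strictly monotonic. Both parts are handled uniformly by the same greedy procedure, directly invoking the three defining conditions of a $(\sigma, k, t)$-good digraph.
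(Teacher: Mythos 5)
Your greedy-walk argument is correct and is exactly the intended proof: the paper itself only cites \cite[Claim 3.1]{kang2016sparse} for (1) and notes (2) follows the same way, and that claim is proved by precisely this monotone walk along $\sigma$ using conditions (a)--(c). Proving (2) directly by rerunning the walk with $F$ in place of $S$ (rather than ``deducing'' it from (1)) is fine and, if anything, cleaner.
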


The following proposition, the heart of the proof of Theorem~\ref{thm:main}, asserts that if $D$ is dense, then we can always find a sparse linkage structure (see~\cite[Lemma 3.4]{kang2016sparse}).

\begin{PROP}[Kang, Kim, Kim, and Suh~\cite{kang2016sparse}]\label{prop:order}
For integers $k,n \geq 1$ and $\overline{\Delta} \geq 0$, let $D$ be an $n$-vertex directed multigraph with $\overline{\Delta}(D) \leq \overline{\Delta}$. There is a linear ordering $\sigma$ of $V(D)$ and a $(\sigma, k, 2k+\overline{\Delta}-1)$-good digraph $D'$, where $D'$ is a spanning subgraph of $D$ with at most $kn - k + k\overline{\Delta}$ edges.
\end{PROP}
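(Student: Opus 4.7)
The plan is to construct the ordering $\sigma$ and the spanning subgraph $D'$ simultaneously via a greedy two-sided peeling procedure. Write $t := 2k + \overline{\Delta} - 1$. The crucial structural input is the following double-counting fact: for any induced sub-digraph $H$ of $D$ with $|V(H)| \geq t + 1$, the underlying graph $UG(H)$ has minimum degree at least $|V(H)| - 1 - \overline{\Delta}$, hence $\sum_{v \in V(H)} d^+_H(v) = |E(H)| \geq |E(UG(H))| \geq |V(H)|(|V(H)|-1-\overline{\Delta})/2$. The average out-degree in $H$ is therefore at least $(|V(H)|-1-\overline{\Delta})/2 \geq (2k-1)/2$, which forces some vertex of $H$ to have (integer) out-degree at least $k$; the symmetric statement for in-degree also holds.

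Starting from $M := V(D)$ and $L = R := \emptyset$, I maintain the positions of $\sigma$ already assigned from the front (by the set $L$) and from the back (by the set $R$). While $|M| \geq t + 1$, I alternate two operations: for a \emph{left-peel}, pick $v \in M$ with $d^+_{D[M]}(v) \geq k$, assign $v$ to position $|L| + 1$ of $\sigma$, and add to $D'$ the $k$ edges from $v$ to a chosen set of $k$ out-neighbors in $M \setminus \{v\}$; for a \emph{right-peel}, pick $u \in M$ with $d^-_{D[M]}(u) \geq k$, assign $u$ to position $n - |R|$, and add $k$ chosen in-edges. Since all chosen out-neighbors of any left-peel lie in the current $M$ and thus end up at later positions in $\sigma$ (and symmetrically for right-peels), every edge added to $D'$ goes forward in $\sigma$, which establishes condition (a) of $(\sigma, k, t)$-goodness; moreover, each left-peeled (resp.\ right-peeled) vertex immediately satisfies condition (b) (resp.\ (c)) by the $k$ edges added at its peel step. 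Once $|M| \leq t$, I place the remaining middle vertices in the unoccupied middle positions of $\sigma$ in an arbitrary order.

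The remaining task is to ensure conditions (b) and (c) for middle vertices and to bound the total number of edges by $kn - k + k\overline{\Delta}$. Peels contribute exactly $k(n - |M|)$ edges to $D'$, and naively adding up to $2k|M|$ further edges to cover middle vertices' out- and in-degrees yields at most $k(n + |M|) \leq kn + k(2k + \overline{\Delta} - 1)$, which exceeds the target by roughly $2k^2$. The key idea is to reuse edges: any left-peel edge whose head is a middle vertex counts toward both the peeled vertex's out-degree and the middle vertex's in-degree, and symmetrically for right-peel edges whose tail is a middle vertex. The main obstacle, and the technical heart of the proof, is to select the out- and in-neighbors at each peel so that every middle vertex eventually receives at least $k$ in-edges from $L$-peels and at least $k$ out-edges provided by $R$-peels; this requires a careful matching-style analysis, applying the double-counting fact at intermediate stages of the peeling to guarantee the availability of such middle-targeted neighbors. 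With this sharing, the slack of $2k(k + \overline{\Delta} - 1)$ between the minimum feasible count $k(n - t)$ and the target $kn - k + k\overline{\Delta}$ accommodates any residual edges, yielding the desired bound.
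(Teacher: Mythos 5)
The decisive gap is that you treat conditions (b) and (c) as if they partitioned the vertices, but with $t=2k+\overline{\Delta}-1$ they overlap on almost all of them: every vertex in a position between $t+1$ and $n-t$ must have \emph{both} out-degree and in-degree at least $k$ in $D'$. In your scheme a left-peeled vertex receives only its $k$ out-edges and a right-peeled vertex only its $k$ in-edges, and since every edge added at a peel has both endpoints in the current set $M$, a vertex acquires no further incident edges from later peels once it leaves $M$; hence the in-degree of a left-peeled vertex is essentially frozen at its peel step, and nothing guarantees it has accumulated $k$ in-edges by then (symmetrically for the out-degree of right-peeled vertices). All left-peeled vertices beyond position $t$ and all right-peeled vertices at positions at most $n-t$ --- roughly $n-3t$ vertices, not the $t$ middle vertices your ``remaining task'' paragraph discusses --- are therefore unaccounted for. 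This cannot be fixed by appending extra edges at the end, because the slack $2k(k+\overline{\Delta}-1)$ is independent of $n$ while these are $\Omega(n)$ vertices needing $k$ edges each; it has to be achieved almost entirely through sharing, i.e.\ by choosing at each peel neighbours that will themselves later need the opposite-direction degree, and proving that such choices always exist is precisely the technical heart that your sketch defers rather than supplies.

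Even the part you do identify is postponed to ``a careful matching-style analysis'' rather than proved, and with arbitrary choices the procedure can fail outright: a vertex that survives into the middle block may have all of its $D$-out-neighbours among the left-peeled vertices, in which case it has fewer than $k$ out-neighbours in later positions and condition (b) is unsatisfiable for the ordering you built no matter which edges are retained; so the peel selections and the placement of the middle block cannot be arbitrary. For comparison, the paper itself does not prove Proposition~\ref{prop:order}; it imports it from~\cite{kang2016sparse}, and its remark that the algorithmic version uses the Hopcroft--Karp bipartite matching algorithm points to the mechanism your proposal invokes but never constructs: the edge set is chosen so that (via a matching-type argument) each selected edge simultaneously serves as a forward out-edge for its tail and a forward in-edge for its head, which is what makes the bound $kn-k+k\overline{\Delta}$ attainable.
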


Indeed, the proof of~\cite[Lemma 3.4]{kang2016sparse} yields a polynomial-time algorithm that outputs $D'$ in time $O(n^3 + kn^{2.5})$ using the algorithm of Hopcroft and Karp~\cite{hopcroft1973} that finds a maximum matching in a bipartite graph.

We also need the following applications of Lemma~\ref{lem:link} and Proposition~\ref{prop:order}.

\begin{LEM}\label{lem:digraph1'}
For integers $k,n \geq 1$ and $\overline{\Delta} \geq 0$, let $D$ be a digraph with $\overline{\Delta}(D) \leq \overline{\Delta}$. Let $U$ be a nonempty subset of $V(D)$. Then there are a spanning subgraph $D'$ of $D[U]$, and subsets $U_i, U_o \subseteq U$ satisfying the following.
\begin{itemize} 
\item[$(1)$] $|E(D')| \leq k|U| - k + k \overline{\Delta}$.
\item[$(2)$] $|U_i|,|U_o| \leq 2k+\overline{\Delta}-1$.
\item[$(3)$] For every $S \subseteq V(D)$ with $|S| \leq k-1$ and for every $u,v \in U \setminus S$, the digraph $D'-S$ has a path from $u$ to a vertex in $U_o \setminus S$, and a path from a vertex in $U_i \setminus S$ to $v$.
\end{itemize}
\end{LEM}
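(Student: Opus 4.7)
The plan is to apply Proposition~\ref{prop:order} directly to the induced subgraph $D[U]$, which inherits the complement-degree bound $\overline{\Delta}(D[U]) \leq \overline{\Delta}(D) \leq \overline{\Delta}$. This yields a linear ordering $\sigma$ of $U$ together with a $(\sigma, k, 2k+\overline{\Delta}-1)$-good spanning subgraph $D'$ of $D[U]$ with $|E(D')| \leq k|U| - k + k\overline{\Delta}$, which already gives condition $(1)$.

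I would then simply take $U_i := \sigma(1, 2k+\overline{\Delta}-1)$ and $U_o := \sigma(|U| - 2k - \overline{\Delta} + 2,\, |U|)$, i.e.\ the first and last $2k+\overline{\Delta}-1$ vertices in the ordering. Each of these sets has size at most $2k+\overline{\Delta}-1$, giving $(2)$.

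For condition $(3)$, the main observation is that although $S$ may contain vertices outside $U$, only $S \cap U$ is relevant, since $D' \subseteq D[U]$ and hence $D' - S = D' - (S \cap U)$. Setting $S' := S \cap U$, we have $|S'| \leq k-1$, and for any $u, v \in U \setminus S = V(D') \setminus S'$ we invoke Lemma~\ref{lem:link}$(1)$ twice with parameter $t = 2k+\overline{\Delta}-1 \geq k$: once to produce a vertex $b \in \sigma(|U|-2k-\overline{\Delta}+2, |U|) = U_o$ with a $u \to b$ path in $D' - S'$, and once to produce a vertex $a \in \sigma(1, 2k+\overline{\Delta}-1) = U_i$ with an $a \to v$ path in $D' - S'$. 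Both endpoints $a, b$ necessarily avoid $S'$, hence avoid $S$, so they lie in $U_i \setminus S$ and $U_o \setminus S$ respectively.

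I do not expect any real obstacle here: the statement is essentially a localised, ``induced-subgraph'' repackaging of Proposition~\ref{prop:order} combined with Lemma~\ref{lem:link}, and the only subtle point is the bookkeeping that $S$ is allowed to meet $V(D) \setminus U$, which is handled by the trivial observation above.
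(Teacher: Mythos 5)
Your proposal is correct and is exactly the argument the paper intends: the paper's proof of this lemma is the one-liner ``immediate from Lemma~\ref{lem:link} and Proposition~\ref{prop:order}'', and your write-up (applying Proposition~\ref{prop:order} to $D[U]$, taking $U_i$, $U_o$ as the first and last $2k+\overline{\Delta}-1$ vertices of $\sigma$, and invoking Lemma~\ref{lem:link}$(1)$ with $S\cap U$) supplies precisely the omitted details, including the correct handling of $S$ meeting $V(D)\setminus U$.
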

\begin{proof}
The proof is immediate from Lemma~\ref{lem:link} and Proposition~\ref{prop:order}.
\end{proof}

\begin{LEM}\label{lem:digraph2'}
For integers $k,n \geq 1$ and $\overline{\Delta} \geq 0$, let $D$ be a digraph with $\overline{\Delta}(D) \leq \overline{\Delta}$, and $\left \{P_1 , \dots , P_k \right \}$ be a collection of $k$ vertex-disjoint minimal paths in $D$ such that $P_i$ is a path from $a_i \in V(D)$ to $b_i \in V(D)$.

For every nonempty $U \subseteq \bigcup_{i=1}^{k}{{\rm Int}(P_i)}$, there are a spanning subgraph $D'$ of $D[U] - \bigcup_{i=1}^{k}{E(P_i)}$, and subsets $U_i, U_o \subseteq U$ satisfying the following.
\begin{itemize} 
\item[$(1)$] $|E(D')| \leq (k-1)|U| + (k-1)(\overline{\Delta} + 1)$.
\item[$(2)$] $|U_i|,|U_o| \leq 2k+\overline{\Delta}-1$.
\item[$(3)$] For every $S \subseteq V(D)$ with $|S| \leq k-1$ and for every $u,v \in U \setminus S$, the subgraph $D-S$ has a path from $u$ to a vertex in $(U_o \cup \left \{b_1 , \dots , b_k \right \}) \setminus S$ using only edges in $E(D') \cup \bigcup_{i=1}^{k}E(P_i)$, and a path from a vertex in $(U_i \cup \left \{a_1 , \dots , a_k \right \})\setminus S$ to $v$ only using edges in $E(D') \cup \bigcup_{i=1}^{k}E(P_i)$.
\end{itemize}
\end{LEM}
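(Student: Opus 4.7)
The plan is to apply Proposition~\ref{prop:order} directly to the induced subgraph with the path edges deleted, but with the reduced integer $k-1$ instead of $k$, using the paths $P_1,\dots,P_k$ to compensate for the weaker connectivity. The key numerical observation is that $\overline{\Delta}(D[U] - \bigcup_i E(P_i)) \leq \overline{\Delta} + 2$: because the $P_i$ are vertex-disjoint, each vertex of $U$ has at most two edges of $\bigcup_i E(P_i)$ incident to it in $D[U]$, and each such deletion can increase the non-neighbour count by at most one. Applying Proposition~\ref{prop:order} with integer $k-1$ and this bound then produces a linear ordering $\sigma$ of $U$ and a spanning subgraph $D'$ of $D[U] - \bigcup_i E(P_i)$ that is $(\sigma, k-1, t)$-good for $t := 2(k-1) + (\overline{\Delta}+2) - 1 = 2k+\overline{\Delta}-1$, with $|E(D')| \leq (k-1)|U| - (k-1) + (k-1)(\overline{\Delta}+2) = (k-1)|U| + (k-1)(\overline{\Delta}+1)$. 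Setting $U_i := \sigma(1,t)$ and $U_o := \sigma(|U|-t+1, |U|)$ immediately secures (1) and (2).

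For (3), write $\tilde{D}$ for the digraph on $V(D)$ with edge set $E(D') \cup \bigcup_i E(P_i)$ and set $T := U_o \cup \{b_1,\dots,b_k\}$. Fix $S \subseteq V(D)$ with $|S| \leq k-1$ and $u \in U \setminus S$, and let $j$ be such that $u \in \mathrm{Int}(P_j)$. If $|S \cap U| \leq k-2$, then Lemma~\ref{lem:link}(1) applied to $D'$ with the set $S \cap U$ (of cardinality at most $k-2$, which is legal for a $(\sigma,k-1,t)$-good digraph) yields a path from $u$ to some $w \in U_o$ in $D' - (S \cap U)$. Because $D' \subseteq D[U]$, this path lies entirely in $U$ and therefore avoids $S \setminus U$ automatically, placing it in $\tilde D - S$ with endpoint $w \in U_o \setminus S \subseteq T \setminus S$.

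The harder case is $|S \cap U| = k-1$, which forces $S \subseteq U$ and $|S|=k-1$. Since $S \subseteq \bigcup_i \mathrm{Int}(P_i)$ and the $P_i$ are vertex-disjoint, the $k-1$ vertices of $S$ distribute across $k$ pairwise disjoint paths, and pigeonhole supplies some $m^* \in [k]$ with $V(P_{m^*}) \cap S = \emptyset$. If $m^* = j$, the suffix $P_j^{>u}$ from $u$ to $b_j$ is a path in $\tilde D - S$ terminating at $b_j \in T \setminus S$ (note $b_j \notin U \supseteq S$). Otherwise $m^* \neq j$, and I argue via Menger's theorem in $\tilde D$ that no $(u,T)$-separator $X$ of size at most $k-1$ exists. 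Any such $X$ must block the $k-1$ internally vertex-disjoint $u$-to-$U_o$ paths inside $D'$ (produced by Lemma~\ref{lem:link}(1) combined with Menger inside $D'$), forcing $|X \cap U| \geq k-1$ and hence $X \subseteq U$ with $|X|=k-1$; in addition, $X$ must block the hybrid route that walks $P_j$ from $u$ up to the predecessor of its first $S$-vertex, then uses $D'$-edges to reach $V(P_{m^*}) \cap U$, and finally follows $P_{m^*}$ to $b_{m^*}$. The minimality of each $P_i$ (no forward chords in $D$) rules out the shortcuts that would otherwise let a single $X$-vertex simultaneously cut a $D'$-escape and the path-edge escape, yielding $|X| \geq k$ and the desired contradiction. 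The symmetric argument applied to in-degrees and the set $U_i \cup \{a_1,\dots,a_k\}$ produces the second half of (3).

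The main obstacle is the Menger argument in the hard case: one must show that even when a separator $X \subseteq U$ of size $k-1$ already saturates the $k-1$ internally vertex-disjoint $(u,U_o)$-paths inside $D'$, it cannot simultaneously cut the path-edge escape through an unblocked $P_{m^*}$. The bookkeeping is delicate because $X$ is precisely one vertex short of blocking everything, so the argument must account for the possibility that the $X$-vertex on $P_j^{>u}$ coincides with one of the $D'$-path blockers, and rule this out using the minimality of the $P_i$ together with the lower bound $k-1$ on the $D'$-out-degree of every vertex of $U \setminus U_o$.
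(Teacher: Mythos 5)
Your construction of $D'$, $U_i$, $U_o$ is exactly the paper's (apply Proposition~\ref{prop:order} with $k-1$ to $D[U]-\bigcup_i E(P_i)$, noting $\overline{\Delta}(D[U]-\bigcup_i E(P_i))\leq\overline{\Delta}+2$), and your easy case $|S\cap U|\leq k-2$ is fine. But the heart of $(3)$ --- your ``hard case'' $|S\cap U|=k-1$ with $m^*\neq j$ --- is not proved: you yourself flag it as ``the main obstacle,'' and the route you sketch does not work as stated. There is no reason a separator analysis in $\tilde D$ can be closed the way you describe: the ``hybrid route'' that leaves $P_j$ just before its first $S$-vertex and ``uses $D'$-edges to reach $V(P_{m^*})\cap U$'' is not justified by anything you have established (why should any vertex of $P_{m^*}$ be reachable from there in $D'-S$?), and it can even be vacuous, since $U\subseteq\bigcup_i \mathrm{Int}(P_i)$ does not force $V(P_{m^*})\cap U\neq\emptyset$, in which case the edges of $P_{m^*}$ are unusable (every edge of $D'$ has both ends in $U$, so one cannot board $P_{m^*}$ at all). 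You also conflate the fixed set $S$ with a hypothetical separator $X$. So, as written, the proof has a genuine gap precisely at the step the lemma exists for.

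The missing idea is simpler and avoids Menger entirely. Given $S$ and $u$, let $v_i$ be the vertex of \emph{maximum} index in $\sigma$ reachable from $u$ in $D'-S$. If $v_i\in U_o$ you are done; otherwise $v_i$ has out-degree at least $k-1$ in $D'$, and maximality forces every $D'$-out-neighbour of $v_i$ to lie in $S$, hence $S=N_{D'}^+(v_i)$ exactly (so in particular $S\subseteq N_D^+(v_i)$). Now take the path $P_t$ with $v_i\in\mathrm{Int}(P_t)$ --- note this need \emph{not} be a path disjoint from $S$, which is why your pigeonhole choice of $P_{m^*}$ is the wrong object --- and let $Q$ be its suffix from $v_i$ to $b_t$, with $w_i$ the successor of $v_i$ on $Q$. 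Minimality of $P_t$ gives $V(Q)\cap N_D^+(v_i)=\{w_i\}$, and $w_i\notin S$ because the unique edge $v_iw_i$ lies in $\bigcup_i E(P_i)$, which is disjoint from $E(D')$, so $w_i\notin N_{D'}^+(v_i)=S$. Hence $V(Q)\cap S=\emptyset$ and concatenating the $D'-S$ path from $u$ to $v_i$ with $Q$ gives the required escape to $b_t$ using only edges of $E(D')\cup\bigcup_i E(P_i)$; the in-version is symmetric. This is where the minimality hypothesis is actually used, and it replaces the delicate separator bookkeeping you were attempting.
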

\begin{proof}
Let $E_\textrm{path} := \bigcup_{i=1}^{k}E(P_i)$. Since $\overline{\Delta}(D) \leq \overline{\Delta}$ and every vertex intersects at most one path in $\left \{P_1 , \dots , P_k \right \}$, we have $\overline{\Delta}(D[U] - E_{\rm path}) \leq \overline{\Delta} + 2$. By Proposition~\ref{prop:order}, there are a linear ordering $\sigma$ of $U$ and a $(\sigma , k-1 , 2k+\overline{\Delta}-1)$-good spanning subgraph $D'$ of $D[U] - E_\textrm{path}$ that satisfies (1). Let $U_i := \sigma(1,2k+\overline{\Delta}-1)$ and $U_o := \sigma(|U|-2k-\overline{\Delta}+2,|U|)$. Then $|U_i|,|U_o| \leq 2k+\overline{\Delta}-1$, satisfying (2).

Now it remains to prove (3). Let $S \subseteq V(D)$ with $|S| \leq k-1$ and $u \in U \setminus S$. We aim to prove that there is a path $P$ in $D-S$ from $u$ to a vertex in $(U_o \cup \left \{b_1 , \dots , b_k \right \}) \setminus S$ with $E(P) \subseteq E(D') \cup E_\textrm{path}$. Let us write $\sigma = (v_1 , \dots , v_{|U|})$ and $i$ be the maximum index such that $u$ can reach to $v_i$ by a directed path in $D'-S$. 

If $i \geq |U|-2k-\overline{\Delta}+2$, then $v_i \in U_o$. Let $P$ be a directed path in $D'-S$ from $u$ to $v_i$ and we are done. We may assume that $i \leq |U|-2k-\overline{\Delta}+1$. By the maximality of $i$, we have $S = N_{D'}^+ (v_i)$ since every vertex in $\sigma(1,|U|-2k-\overline{\Delta}+1)$ has out-degree at least $k-1$ in $D'$ and $|S| \leq k-1$. From the definition of $U$, there is $t \in [k]$ such that $v_i \in V^\textrm{int}(P_t)$, where $P_t$ is a minimal path in $D$ from $a_t$ to $b_t$. Let $Q$ be the subpath of $P_t$ from $v_i$ to $b_t$, and $w_i$ be the out-neighbour of $v_i$ in $Q$. Since $P_t$ is a minimal path in $D$, we have $V(Q) \cap N_D^{+}(v_i) = \left \{ w_i \right \}$. Hence it follows that $V(Q) \cap N_{D'}^+(v_i) = V(Q) \cap S = \emptyset$ since $E(D') \cap E(Q) \subseteq E(D') \cap E_\textrm{path} = \emptyset$. Therefore, there is a path $P$ in $D-S$ from $u$ to $b_t$ with $E(P) \subseteq E(D') \cup E_\textrm{path}$, as desired. Similarly, for every $v \in U \setminus S$, there is a path $P'$ in $D-S$ from a vertex in $U_i \cup \left \{a_1 , \dots , a_k \right \}$ to $v$ with $E(P') \subseteq E(D') \cup E_\textrm{path}$. 
\end{proof}

Both Lemmas~\ref{lem:digraph1'} and~\ref{lem:digraph2'} have the following variations with the identical proofs. When applying Proposition~\ref{prop:order}, we may assume that $D$ is a digraph by removing multiple edges.

\begin{LEM}\label{lem:digraph3'}
For integers $k,n \geq 1$ and $\overline{\Delta} \geq 0$, let $D$ be a directed multigraph with $\overline{\Delta}(D) \leq \overline{\Delta}$. Let $U$ be a nonempty subset of $V(D)$. Then there are a spanning subgraph $D'$ of $D[U]$, and subsets $U_i, U_o \subseteq U$ satisfying the following.
\begin{itemize} 
\item[$(1)$] $|E(D')| \leq k|U| - k + k \overline{\Delta}$.
\item[$(2)$] $|U_i|,|U_o| \leq 2k+\overline{\Delta}-1$.
\item[$(3)$] For every $F \subseteq E(D)$ with $|F| \leq k-1$ and for every $u,v \in U$, the digraph $D'-F$ has a path from $u$ to a vertex in $U_o$, and a path from a vertex in $U_i$ to $v$.
\end{itemize}
\end{LEM}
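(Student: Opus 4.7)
The plan is to mimic the proof of Lemma~\ref{lem:digraph1'} almost verbatim, replacing the vertex-deletion version of Lemma~\ref{lem:link} by the edge-deletion version, i.e.\ Lemma~\ref{lem:link}~(2). The only essentially new ingredient is the parenthetical remark preceding the statement: Proposition~\ref{prop:order} is stated for simple digraphs, so I first reduce the multigraph $D$ to its simple version $D_0$ obtained by discarding multiplicities. Since $UG(D_0)=UG(D)$, we still have $\overline{\Delta}(D_0)\leq\overline{\Delta}$.

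First I apply Proposition~\ref{prop:order} to the simple digraph $D_0[U]$ with connectivity parameter $k$. This yields a linear ordering $\sigma=(v_1,\dots,v_{|U|})$ of $U$ together with a $(\sigma,k,2k+\overline{\Delta}-1)$-good spanning subgraph $D'$ of $D_0[U]$ satisfying $|E(D')|\leq k|U|-k+k\overline{\Delta}$. Since $E(D_0[U])\subseteq E(D[U])$, $D'$ is also a spanning subgraph of $D[U]$, giving (1). Setting
\[
U_i:=\sigma(1,2k+\overline{\Delta}-1),\qquad U_o:=\sigma(|U|-2k-\overline{\Delta}+2,|U|)
\]
each of size at most $2k+\overline{\Delta}-1$ establishes (2).

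For (3), fix $F\subseteq E(D)$ with $|F|\leq k-1$ and $u,v\in U$. Because $D'$ is a simple digraph contained in $D_0$, the parallel copies of an edge of $D'$ that lie in $D\setminus D_0$ are not present in $D'$ at all; hence $D'-F=D'-(F\cap E(D'))$, and $F':=F\cap E(D')$ is a subset of $E(D')$ of size at most $k-1$. Applying Lemma~\ref{lem:link}~(2) to the $(\sigma,k,2k+\overline{\Delta}-1)$-good digraph $D'$ with the edge-set $F'$ and the vertex $u$ produces a vertex $x\in\sigma(|U|-2k-\overline{\Delta}+2,|U|)=U_o$ with a path from $u$ to $x$ in $D'-F'=D'-F$, and applying it with $F'$ and the vertex $v$ produces a vertex $w\in\sigma(1,2k+\overline{\Delta}-1)=U_i$ with a path from $w$ to $v$ in $D'-F$.

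There is really no substantive obstacle: the only subtle point is ensuring that edge-deletions in the multigraph $D$ interact correctly with the simple subgraph $D'$, and this is handled in one line by the identity $D'-F=D'-(F\cap E(D'))$.
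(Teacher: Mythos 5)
Your proposal is correct and follows essentially the same route as the paper: the paper's proof is exactly the proof of Lemma~\ref{lem:digraph1'} (Proposition~\ref{prop:order} applied to $D[U]$ after discarding multiplicities, with $U_i,U_o$ taken as the first and last $2k+\overline{\Delta}-1$ vertices of $\sigma$), with Lemma~\ref{lem:link}~(2) replacing Lemma~\ref{lem:link}~(1). Your explicit observation that $D'-F = D'-(F\cap E(D'))$ with $|F\cap E(D')|\leq k-1$ is precisely the (routine) point the paper leaves implicit in its remark about removing multiple edges.
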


\begin{LEM}\label{lem:digraph4'}
For integers $k,n \geq 1$ and $\overline{\Delta} \geq 0$, let $D$ be a directed multigraph with $\overline{\Delta}(D) \leq \overline{\Delta}$ and $\left \{P_1 , \dots , P_k \right \}$ be a collection of $k$ edge-disjoint paths in $D$ such that for $i \in [k]$,  $P_i$ is a path from $a_i \in V(D)$ to $b_i \in V(D)$. 

For every nonempty $U \subseteq \bigcup_{i=1}^{k}{{\rm Int}(P_i)}$, there are a spanning subgraph $D'$ of $D[U] - \bigcup_{i=1}^{k}{E(P_i)}$, and subsets $U_i, U_o \subseteq U$ satisfying the following.
\begin{itemize} 
\item[$(1)$] $|E(D')| \leq (k-1)|U| + (k-1)(\overline{\Delta} + 2k - 1)$.
\item[$(2)$] $|U_i|,|U_o| \leq 4k + \overline{\Delta} - 3$.
\item[$(3)$] For every $F \subseteq E(D)$ with $|F| \leq k-1$ and for every $u,v \in U$, a subgraph $D-F$ has a path from $u$ to a vertex in $U_o \cup \left \{b_1 , \dots , b_k \right \}$ using only edges in $E(D') \cup \bigcup_{i=1}^{k}E(P_i)$, and a path from a vertex in $U_i \cup \left \{a_1 , \dots , a_k \right \}$ to $v$ using only edges in $E(D') \cup \bigcup_{i=1}^{k}E(P_i)$.
\end{itemize}
\end{LEM}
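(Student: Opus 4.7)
The plan is to follow the template of Lemma~\ref{lem:digraph2'}, with two adjustments to handle the weaker hypothesis that the $P_i$ are edge-disjoint rather than vertex-disjoint, and the replacement of vertex-removal by arc-removal in condition (3). The minimality assumption used in Lemma~\ref{lem:digraph2'} turns out to be unnecessary here, for reasons made clear below.

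First I would bound $\overline{\Delta}(D[U] - E_{\rm path})$, where $E_{\rm path} := \bigcup_{i=1}^{k} E(P_i)$. Since the $P_i$ are edge-disjoint and each vertex of $U$ lies on any given $P_i$ at most once (contributing at most two incident arcs per path), any $v \in U$ is incident to at most $2k$ arcs of $E_{\rm path}$, so at most $2k$ new non-neighbours of $v$ can appear after removing $E_{\rm path}$. This yields $\overline{\Delta}(D[U] - E_{\rm path}) \leq \overline{\Delta} + 2k$. Applying Proposition~\ref{prop:order} with parameter $k-1$ to the simple digraph underlying $D[U] - E_{\rm path}$ then produces a linear ordering $\sigma$ of $U$ and a $(\sigma, k-1, 4k + \overline{\Delta} - 3)$-good spanning subgraph $D'$ with at most $(k-1)|U| - (k-1) + (k-1)(\overline{\Delta} + 2k) = (k-1)|U| + (k-1)(\overline{\Delta} + 2k - 1)$ arcs, giving (1). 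Setting $U_i := \sigma(1, 4k + \overline{\Delta} - 3)$ and $U_o := \sigma(|U| - 4k - \overline{\Delta} + 4, |U|)$ gives (2).

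For (3), given $u \in U$ and $F \subseteq E(D)$ with $|F| \leq k-1$, let $v_i$ be the vertex of largest $\sigma$-index reachable from $u$ in $D'-F$. If $v_i \in U_o$ we are done; otherwise $(\sigma, k-1, 4k+\overline{\Delta}-3)$-goodness forces $v_i$ to have out-degree at least $k-1$ in $D'$, and since arcs of $D'$ go forward in $\sigma$, the maximality of $i$ forces each such out-arc to lie in $F$. Combined with $|F| \leq k-1$, this pins $F$ down as $F = \delta_{D'}^+(v_i) \subseteq E(D')$, so in particular $F \cap E_{\rm path} = \emptyset$. Now pick any $t \in [k]$ with $v_i \in {\rm Int}(P_t)$, and let $Q$ be the subpath of $P_t$ from $v_i$ to $b_t$; the arcs of $Q$ all lie in $E_{\rm path}$ and hence avoid $F$, so concatenating the $uv_i$-path in $D'-F$ with $Q$ and pruning to a simple path produces a $ub_t$-path in $D-F$ all of whose arcs lie in $E(D') \cup E_{\rm path}$. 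The in-direction is symmetric. The main subtlety is the tight accounting that forces $F \subseteq E(D')$: this is precisely what makes the extension along $P_t$ automatic in the arc-removal setting and obviates the minimality hypothesis that was needed in Lemma~\ref{lem:digraph2'}.
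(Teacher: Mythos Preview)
Your proposal is correct and follows essentially the same route as the paper's own proof: bound $\overline{\Delta}(D[U]-E_{\rm path})\le \overline{\Delta}+2k$, apply Proposition~\ref{prop:order} with parameter $k-1$ to obtain a $(\sigma,k-1,4k+\overline{\Delta}-3)$-good $D'$, set $U_i,U_o$ as the initial and final $\sigma$-segments, and for (3) use the maximality argument forcing $F=\delta_{D'}^+(v_i)\subseteq E(D')$ so that the tail of $P_t$ from $v_i$ to $b_t$ avoids $F$. Your observation that this accounting is exactly what replaces the minimality hypothesis of Lemma~\ref{lem:digraph2'} is apt and matches the paper's reasoning.
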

\begin{proof}
Let $E_\textrm{path} := \bigcup_{i=1}^{k}E(P_i)$. Since $P_i$ intersects every vertex at most once for $i \in [k]$ and $\overline{\Delta}(D) \leq \overline{\Delta}$, we have $\overline{\Delta}(D[U] - E_{\rm path}) \leq \overline{\Delta} + 2k$. By Proposition~\ref{prop:order}, there are a linear ordering $\sigma$ of $U$ and a $(\sigma , k-1 , 4k+\overline{\Delta}-3)$-good digraph $D'$, where $D'$ is a spanning subgraph of $D[U] - E_\textrm{path}$ that satisfies (1). Let $U_i := \sigma(1,4k+\overline{\Delta}-3)$ and $U_o := \sigma(|U|-4k-\overline{\Delta}+4,|U|)$. Then $|U_i|,|U_o| \leq 4k+\overline{\Delta}-3$, satisfying (2).

Now it remains to prove (3). Let $F \subseteq E(D)$ with $|F| \leq k-1$ and $u \in U$. We aim to prove that there is a path $P$ in $D-F$ from $u$ to a vertex in $U_o \cup \left \{b_1 , \dots , b_k \right \}$ with $E(P) \subseteq E(D') \cup E_\textrm{path}$. Let us write $\sigma = (v_1 , \dots , v_{|U|})$ and $i$ be the maximum index such that $u$ can reach to $v_i$ by a directed path in $D'-F$. 

If $i \geq |U|-4k-\overline{\Delta}+4$, then $v_i \in U_o$. Let $P$ be a directed path in $D'-F$ from $u$ to $v_i$ and we are done. We may assume that $i \leq |U|-4k-\overline{\Delta}+3$. By the maximality of $i$, we have $F = \delta_{D'}^+ (v_i)$ since every vertex in $\sigma(1,|U|-4k-\overline{\Delta}+4)$ has out-degree at least $k-1$ in $D'$ and $|F| \leq k-1$. From the definition of $U$, there is $t \in [k]$ such that $v_i \in V^\textrm{int}(P_t)$, where $P_t$ is a path in $D$ from $a_t$ to $b_t$. Let $Q$ be a subpath of $P_t$ from $v_i$ to $b_t$. Since $E(D') \cap E_{\rm path} = \emptyset$, it follows that $E(Q) \cap F = \emptyset$. Therefore, there is a path $P$ in $D-F$ from $u$ to $b_t$ with $E(P) \subseteq E(D') \cup E_\textrm{path}$, as desired. Similarly, for every $v \in U$, there is a path $P'$ in $D-F$ from a vertex in $U_i \cup \left \{a_1 , \dots , a_k \right \}$ to $v$ with $E(P') \subseteq E(D') \cup E_\textrm{path}$.
\end{proof}

\subsection{Minimally strongly $k$-connected digraphs}

For any undirected graph $G$, a subgraph $C = (v_1 , \dots , v_t)$ is a \emph{circuit} in $G$ if $v_1 , \dots , v_t \in V(G)$ and $v_i v_{i+1} \in E(G)$ for $1 \leq i \leq t$, where we define $v_{t+1} = v_1$ and these $t$ edges are distinct. Note that the vertices $v_1 , \dots , v_t$ are not necessarily distinct, and we regard a circuit $C$ as a subgraph of $G$, such that $V(C) := \left \{v_1 , \dots , v_t \right \}$ and $E(C) := \left \{v_i v_{i+1} \: : \: 1 \leq i \leq t \right \}$.

For a digraph $D$, a subgraph $C = (v_1 , \dots , v_{2m})$ is an \emph{anti-directed trail} in $D$ if $v_1 , \dots , v_{2m} \in V(D)$, $v_{2i-1} v_{2i} \in E(D)$ and $v_{2i+1} v_{2i} \in E(D)$ for $1 \leq i \leq m$, where we define $v_{2m+1} = v_1$ and these $2m$ edges are distinct. Note that the vertices $v_1 , \dots , v_{2m}$ are not necessarily distinct, and we regard an anti-directed trail $C$ as a subgraph of $D$, such that $V(C) := \left \{v_1 , \dots , v_{2m} \right \}$ and $E(C) := \bigcup_{i=1}^{m} \left \{v_{2i-1}v_{2i} , v_{2i+1}v_{2i} \right \}$.

For a digraph $D = (V,E)$, let $V' := \left \{v' \: : \: v \in V \right \}$ and $V'' := \left \{v'' \: : \: v \in V \right \}$ be two disjoint copies of $V$. A \emph{bipartite representation} $BG(D)$ of $D$ be an undirected bipartite graph with $V(BG(D)) := V' \cup V''$ and $E(BG(D)) := \left \{ \left \{x',y''\right \} \: : \: (x,y) \in E(D) \right \}$.

It is easy to see that a subgraph $D'$ of $D$ is an anti-directed trail if and only if its bipartite representation $BG(D')$ is a circuit in $BG(D)$. Therefore, $D$ has no anti-directed trail then 
$$|E(D)| = |E(BG(D))| \leq |V(BG(D))|-1 = 2|V(D)|-1,$$ 
since $BG(D)$ is a forest. This proves the following proposition (see~\cite[Lemma 2]{mader1985}) that characterizes digraphs without anti-directed trails. 

\begin{PROP}\label{prop:forest}
A digraph $D$ does not contain an anti-directed trail if and only if $BG(D)$ is a forest. In particular, $|E(D)| \leq 2|V(D)|-1$ if $D$ has no anti-directed trail.
\end{PROP}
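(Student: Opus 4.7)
The plan is to establish the biconditional by showing a bijective correspondence between anti-directed trails in $D$ and circuits in $BG(D)$; the ``in particular'' clause will then follow from the standard edge count for forests.

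First I would carefully unpack the definitions to produce the correspondence. Given an anti-directed trail $C = (v_1, \dots, v_{2m})$ in $D$ with edges $v_{2i-1}v_{2i}$ and $v_{2i+1}v_{2i}$ (indices mod $2m$), map it to the subgraph of $BG(D)$ on the edge set
\[
\bigcup_{i=1}^{m}\bigl\{\{v'_{2i-1},v''_{2i}\},\,\{v'_{2i+1},v''_{2i}\}\bigr\}.
\]
Reading the edges in order produces the closed walk $v'_1,\,v''_2,\,v'_3,\,v''_4,\dots,v'_{2m-1},\,v''_{2m},\,v'_1$ in $BG(D)$, and since the $2m$ edges of $C$ are distinct by definition, their images in $BG(D)$ are also distinct; hence this is a circuit in $BG(D)$. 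Conversely, any circuit in the bipartite graph $BG(D)$ alternates between the copies $V'$ and $V''$ and so has even length $2m$; writing it as $u'_1,u''_2,u'_3,\dots,u''_{2m},u'_1$ and setting $v_i := u_i$ yields an anti-directed trail in $D$, because each edge $\{u'_{2i-1},u''_{2i}\}$ of $BG(D)$ comes from $v_{2i-1}v_{2i}\in E(D)$ and $\{u'_{2i+1},u''_{2i}\}$ from $v_{2i+1}v_{2i}\in E(D)$, and the $2m$ edges are distinct because the corresponding edges of $BG(D)$ were distinct. This establishes the correspondence.

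From the correspondence, $D$ has no anti-directed trail if and only if $BG(D)$ has no circuit, i.e.\ $BG(D)$ is a forest. For the final inequality, I use that $|E(D)| = |E(BG(D))|$ (the map $(x,y)\mapsto \{x',y''\}$ is a bijection between $E(D)$ and $E(BG(D))$) together with the elementary bound $|E(F)|\le |V(F)|-1$ for any forest $F$; here $|V(BG(D))| = 2|V(D)|$, which gives $|E(D)| \le 2|V(D)|-1$.

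The main obstacle is purely bookkeeping: one must check that ``distinct edges'' in $C$ really does correspond to ``distinct edges'' in the image, and vice versa, even when some of the vertices $v_i$ coincide as vertices of $D$. This is where the bipartite structure of $BG(D)$ is essential: two distinct edges $(x_1,y_1)\ne(x_2,y_2)$ of $D$ give distinct edges $\{x'_1,y''_1\}\ne\{x'_2,y''_2\}$ of $BG(D)$ (and conversely), because the $V'$--$V''$ bipartition records the orientation. Apart from this verification, the argument is direct.
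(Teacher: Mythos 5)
Your proposal is correct and matches the paper's argument: the paper also proves the proposition by observing that anti-directed trails in $D$ correspond exactly to circuits in $BG(D)$ (so no anti-directed trail means $BG(D)$ is a forest) and then using $|E(D)| = |E(BG(D))| \leq |V(BG(D))| - 1 = 2|V(D)| - 1$. You simply spell out the correspondence and the distinctness bookkeeping that the paper leaves as ``easy to see.''
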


For a directed multigraph $D=(V,E)$ and a vertex $u \in V$, a spanning subgraph $T$ is an \emph{out-branching} (\emph{in-branching}) of $D$ rooted at $u$ if $T$ is an oriented graph obtained from a tree by orienting edges and $u$ is the only vertex with in-degree (out-degree, respectively) zero in $T$. We make the use of the following theorem (see~\cite{edmonds1973} or~\cite[Theorem 9.3.1]{bang2008digraphs}).

\begin{THM}[Edmonds~\cite{edmonds1973}]\label{thm:branching}
Let $D=(V,E)$ be a directed multigraph with a vertex $u \in V(D)$. Then the following hold.
\begin{itemize} 
\item[$(\rm 1)$] $D$ contains $k$ edge-disjoint out-branchings rooted at $u$ if and only if for every $\emptyset \ne S \subseteq V(D) \setminus \left \{u \right \}$, $e_D (V(D) \setminus S , S) \geq k$. 
\item[$(\rm 2)$] $D$ contains $k$ edge-disjoint in-branchings rooted at $u$ if and only if for every $\emptyset \ne S \subseteq V(D) \setminus \left \{u \right \}$, $e_D (S , V(D) \setminus S) \geq k$.
\end{itemize}
\end{THM}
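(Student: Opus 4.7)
\bigskip

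\noindent\textbf{Proof plan for Theorem~\ref{thm:branching}.} I will prove (1); statement (2) then follows by reversing the orientation of every edge of $D$. The necessity of the cut condition is immediate: if $D$ has $k$ edge-disjoint out-branchings rooted at $u$, then for any $\emptyset \ne S \subseteq V(D) \setminus \{u\}$, each out-branching must contain at least one edge from $V(D) \setminus S$ to $S$ (since $u \notin S$ and every vertex of $S$ is reachable from $u$ along the branching). As the branchings are edge-disjoint, these contribute $k$ distinct edges to $E_D(V(D) \setminus S,\, S)$.

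For sufficiency the plan is to induct on $k$. The case $k = 0$ is vacuous. For the inductive step it suffices to find a single out-branching $T$ of $D$ rooted at $u$ such that $D' := D - E(T)$ still satisfies
\[
    e_{D'}(V(D) \setminus S,\, S) \;\geq\; k-1 \qquad\text{for every } \emptyset \ne S \subseteq V(D) \setminus \{u\};
\]
applying the induction hypothesis to $D'$ then produces $k-1$ further out-branchings at $u$, edge-disjoint from $T$ and from each other.

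To construct such a $T$, I would grow it greedily. Maintain an arborescence $T_A$ rooted at $u$ that spans a set $A$ with $u \in A$, subject to the invariant that $D - E(T_A)$ satisfies the $(k-1)$-cut condition above. The invariant holds trivially when $A = \{u\}$ and $T_A$ is edgeless. The key lemma, which drives the induction, is that whenever $A \subsetneq V(D)$ there exists an edge $(x,y) \in E(D - E(T_A))$ with $x \in A$ and $y \in V(D) \setminus A$ such that appending $(x,y)$ to $T_A$ preserves the invariant; iterating yields the desired spanning $T$.

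The main obstacle is precisely this key lemma, and I would handle it via submodularity of the in-degree function $\rho(S) := e_{D-E(T_A)}(V(D)\setminus S, S)$, which satisfies $\rho(S)+\rho(T) \geq \rho(S\cup T)+\rho(S\cap T)$. Call $S$ \emph{tight} if $u \notin S$, $S \cap (V(D) \setminus A) \neq \emptyset$, and $\rho(S) = k-1$. Submodularity implies that the tight sets are closed under union (once one checks that $S\cup T$ still meets $V(D)\setminus A$, which holds because each of $S,T$ does). Pick a maximal tight set $S^*$; I must show some edge from $A$ to $V(D)\setminus A$ in $D-E(T_A)$ originates outside $S^*$, for such an edge $(x,y)$ cannot lie in any tight $S \supseteq \{y\}$ with $x \notin S$ (maximality plus the union-closure forces $S \subseteq S^*$, contradicting $x \notin S$ when $x \in S^*$). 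Existence of such an edge follows from the original $k$-cut condition applied to $S^*$: of the $\geq k$ edges of $D$ entering $S^*$, at most one lies in the arborescence $T_A$ (since $T_A$ is an in-arborescence when restricted to $A$, using that $u \notin S^*$ means the unique $T_A$-edge entering $S^*$, if any, comes from $A\setminus S^*$), leaving at least $k-1$ edges in $D-E(T_A)$ entering $S^*$; combined with $\rho(S^*)=k-1$ this pins down exactly which edges enter $S^*$ and forces an edge from $A \setminus S^*$ to $V(D)\setminus A$ to exist, completing the step.
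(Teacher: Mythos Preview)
The paper does not prove this theorem --- it is quoted from Edmonds and used as a black box --- so there is no paper proof to compare against. Your overall plan (peel off one out-branching whose removal preserves the $(k{-}1)$-cut condition, and grow it greedily via submodularity of the in-cut function) is exactly Lov\'asz's classical argument and is correct in outline.

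The final paragraph, however, does not establish the key lemma. First, ``at most one $T_A$-edge enters $S^*$'' is false: in an out-arborescence a subset avoiding the root can receive many arborescence edges (take $T_A$ to be the star $u\to a$, $u\to b$ with $\{a,b\}\subseteq S^*$). Second, and more seriously, you seek an edge with tail $x\in A\setminus S^*$ and then invoke ``$x\in S^*$'' for the contradiction, which is internally inconsistent; indeed an edge with tail outside $S^*$ and head in $S^*$ is precisely one that \emph{enters} $S^*$. A concrete failure: let $k=2$, $V=\{u,a,b,c\}$, $A=\{u,a,b\}$, $T_A=\{u\to a,\,u\to b\}$, and $E(D)=\{u\to a,\,u\to b,\,u\to c,\,a\to c,\,c\to a,\,c\to b\}$. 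One checks that $D$ satisfies the $2$-cut condition and that the invariant holds for $D-E(T_A)$; the tight sets meeting $V\setminus A=\{c\}$ are $\{a,c\}$ and $S^*=\{a,b,c\}$. The only edge from $A\setminus S^*=\{u\}$ to $\{c\}$ is $u\to c$, and that edge enters the tight set $\{a,c\}$, so your rule selects a bad edge; the unique good edge is $a\to c$, with tail \emph{inside} $S^*$. The correct step compares the $\rho(V\setminus A)\geq k$ edges from $A$ to $V\setminus A$ against the $\rho(S^*)=k-1$ edges of $D-E(T_A)$ entering $S^*$ to obtain an edge from $A$ to $V\setminus A$ that does not enter $S^*$, and then shows this edge enters no tight set at all; completing this requires a further submodularity argument (see e.g.\ \cite[Theorem~9.3.1]{bang2008digraphs}).
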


Theorem~\ref{thm:branching} has the following corollary, which extends the result of Dalmazzo~\cite{dalmazzo1977} that every strongly $k$-arc-connected $n$-vertex directed multigraph contains a strongly $k$-arc-connected subgraph with at most $2k(n-1)$ edges (see~\cite[Theorem 5.6.1]{bang2008digraphs}).

\begin{COR}\label{cor:minimal_arc}
Let $k \geq 1$ be an integer and $D$ be a minimally strongly $k$-arc-connected directed multigraph and $\emptyset \ne U \subseteq V(D)$. Then $|E(D[U])| \leq 2k(|U|-1)$.
\end{COR}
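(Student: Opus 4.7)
The plan is to apply Edmonds' theorem (Theorem~\ref{thm:branching}) to $D$ to produce a strongly $k$-arc-connected spanning subgraph $T \subseteq D$ whose edges are organized into branchings, invoke the minimality hypothesis to conclude $T = D$, and then bound $|E(D[U])|$ by restricting each branching to $U$.

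Pick any $u \in U$, which exists since $U \neq \emptyset$. Since $D$ is strongly $k$-arc-connected, both $e_D(V(D)\setminus S, S) \geq k$ and $e_D(S, V(D)\setminus S) \geq k$ hold for every nonempty $S \subsetneq V(D)$, so Theorem~\ref{thm:branching} yields $k$ edge-disjoint out-branchings $T_1^+,\ldots,T_k^+$ and $k$ edge-disjoint in-branchings $T_1^-,\ldots,T_k^-$ of $D$ rooted at $u$. Let $T$ be the spanning subgraph of $D$ with edge multiset $\bigcup_{i=1}^{k} E(T_i^+) \cup \bigcup_{i=1}^{k} E(T_i^-)$. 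To see that $T$ is strongly $k$-arc-connected, fix a nonempty $S \subsetneq V(D)$; by symmetry we may assume $u \in V(D)\setminus S$. Each $T_i^-$ contains a path from any chosen vertex of $S$ to $u$, and this path must use at least one edge crossing from $S$ to $V(D)\setminus S$; since the $T_i^-$'s are mutually edge-disjoint, this gives $k$ distinct edges of $T$ from $S$ to $V(D)\setminus S$, whence $e_T(S, V(D)\setminus S) \geq k$. An analogous argument using the $T_i^+$'s yields $e_T(V(D)\setminus S, S) \geq k$. Minimality of $D$ now forces $E(T) = E(D)$: otherwise any edge $e \in E(D) \setminus E(T)$ would leave $D - \{e\} \supseteq T$ strongly $k$-arc-connected, a contradiction.

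It remains to bound $|E(D[U])| = |E(T[U])|$. In the out-branching $T_i^+$ the root $u \in U$ has in-degree $0$ while every other vertex has in-degree exactly $1$; hence every vertex of $U$ has in-degree at most $1$ in $T_i^+[U]$, with $u$ having in-degree $0$, giving $|E(T_i^+[U])| \leq |U|-1$. The dual statement yields $|E(T_i^-[U])| \leq |U|-1$. Summing these estimates,
\[
|E(D[U])| \;\leq\; \sum_{i=1}^{k} |E(T_i^+[U])| + \sum_{i=1}^{k} |E(T_i^-[U])| \;\leq\; 2k(|U|-1),
\]
as required.

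The only genuinely nontrivial step is verifying that the union $T$ of the $2k$ branchings is strongly $k$-arc-connected (and in particular, keeping track of the fact that an out-branching edge may coincide with an in-branching edge, so I argue via the cut condition rather than by concatenating walks). Once this is confirmed, minimality of $D$ converts Edmonds' decomposition into the global equality $T = D$, and the per-branching degree count in the last paragraph is immediate from the defining property of in- and out-branchings.
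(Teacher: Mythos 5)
Your proof is correct and follows essentially the same route as the paper: fix $u\in U$, apply Edmonds' branching theorem to get $k$ edge-disjoint out-branchings and $k$ edge-disjoint in-branchings rooted at $u$, use minimality to conclude their union is all of $D$, and bound $|E(D[U])|$ by the in-/out-degree count in each branching restricted to $U$. The only difference is that you spell out the cut-counting verification that the union of the branchings is strongly $k$-arc-connected, which the paper states without proof.
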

\begin{proof}
Fix a vertex $u \in U$. By Theorem~\ref{thm:branching}, there are $k$ edge-disjoint out-branchings $T_1^+ , \dots , T_{k}^+$ rooted at $u$, and $k$ edge-disjoint in-branchings $T_1^- , \dots , T_{k}^-$ rooted at $u$. Since $\bigcup_{i=1}^{k} T_i^+ \cup \bigcup_{i=1}^{k} T_i^-$ is a strongly $k$-arc-connected spanning subgraph of $D$, we have $D = \bigcup_{i=1}^{k} T_i^+ \cup \bigcup_{i=1}^{k} T_i^-$. As $|E(T_i^+ [U])| \leq |U|-1$ and $|E(T_i^- [U])| \leq |U|-1$ for every $i \in [k]$, we have
$$|E(D[U])| \leq \sum_{i=1}^{k} |E(T_i^+ [U])| + \sum_{i=1}^{k} |E(T_i^- [U])| \leq 2k(|U|-1)$$
as desired.
\end{proof}

We use the following theorem by Mader (see~\cite{mader1996} or~\cite[Corollary 5.6.20]{bang2008digraphs}).

\begin{THM}[Mader~\cite{mader1996}]\label{thm:mader_minimal}
For any integer $k \geq 2$ and a minimally strongly $k$-connected digraph $D = (V,E)$, let $D' = (V,E')$ be a strongly $(k-1)$-connected spanning subgraph of $D$. Then the digraph $(V,E \setminus E')$ contains no anti-directed trail.
\end{THM}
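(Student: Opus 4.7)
The plan is to argue by contradiction. Suppose $E \setminus E'$ contains an anti-directed trail $C$; by the correspondence in Proposition~\ref{prop:forest}, this is equivalent to the bipartite representation $BG(V, E \setminus E')$ containing a cycle $\widetilde C$. I would work with $\widetilde C$ rather than directly with $C$, since cycles in undirected bipartite graphs are easier to traverse (the vertex sequence of $C$ need not be simple, only its edges are distinct). The goal is to combine criticality information at each edge of $C$ to build a vertex separator of size at most $k-2$ in $D'$, contradicting its strong $(k-1)$-connectivity.

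The first step is to extract a criticality certificate at each edge of $C$. For each $e = xy \in E(C) \subseteq E \setminus E'$, minimality of $D$ implies $D - e$ is not strongly $k$-connected, and since $|V| \geq k+1$ and $D$ itself is strongly $k$-connected, there exists $T_e \subseteq V \setminus \{x, y\}$ with $|T_e| = k-1$ such that $D - T_e$ is strongly connected while $D - T_e - e$ is not. Hence there is a partition $V \setminus T_e = A_e \sqcup B_e$ with $x \in A_e$, $y \in B_e$, such that $e$ is the unique edge of $D$ from $A_e$ to $B_e$. Among all such $(T_e, A_e)$ I would fix an extremal choice (say with $A_e$ inclusion-minimal), which removes ambiguity in later uncrossing steps.

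The main step is an uncrossing argument along $\widetilde C$. At each vertex of $\widetilde C$ two edges of $C$ meet, sharing either a tail (if the vertex is a ``$v'$'' copy) or a head (if it is a ``$v''$'' copy). At each such meeting the two fragment pairs $(T_e, A_e)$ and $(T_{e'}, A_{e'})$ are linked by the submodular identity
\begin{equation*}
|T_e \cup T_{e'}| + |T_e \cap T_{e'}| \;=\; |T_e| + |T_{e'}| \;=\; 2(k-1),
\end{equation*}
while strong $k$-connectivity of $D$ forces $|T_e \cup T_{e'}| \geq k$, and hence $|T_e \cap T_{e'}| \leq k-2$. Propagating this, together with the appropriate intersection or union of the $A_e$-fragments, around the entire cycle $\widetilde C$ should produce a set of at most $k-2$ vertices that separates a pair of distinct vertices in $D'$, yielding the desired contradiction.

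The main obstacle is making this uncrossing close up correctly around $\widetilde C$. At each junction one must choose the ``correct'' side (intersection versus symmetric union of $A_e$ and $A_{e'}$) so that a non-trivial separation is maintained, and one must verify that after a full traversal one still separates two distinct vertices rather than degenerating to the empty or trivial cut. The alternating tail/head pattern of an anti-directed trail matches exactly the alternation between the $V'$ and $V''$ sides of $BG(D)$, which is precisely what should make the global closure argument go through; carrying this out edge-by-edge along $\widetilde C$ (rather than vertex-by-vertex in $D$, which is complicated by repeated vertices) is the technical heart of the proof.
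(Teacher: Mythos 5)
There is a genuine gap here. First, the one concrete inequality your plan rests on is unjustified: you assert that strong $k$-connectivity of $D$ forces $|T_e \cup T_{e'}| \geq k$ for two trail-edges meeting at a common tail or head. But $T_e \cup T_{e'}$ is not a separating set of $D$ (indeed $D - T_e$ and $D-T_{e'}$ are each strongly connected, and nothing says removing the union disconnects $D$), so no lower bound on its size follows from $k$-connectivity; in particular nothing rules out $T_e = T_{e'}$, in which case $|T_e \cup T_{e'}| = k-1$ and the claimed bound $|T_e \cap T_{e'}| \leq k-2$ fails. Two edges sharing a tail $x$ can perfectly well be certified by the same $(k-1)$-set $T$, with both edges being ``strong bridges'' of $D-T$ across two different cuts. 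So already the local step of the uncrossing is broken, not merely unfinished. Second, the part you defer as ``the technical heart'' --- making the fragment manipulation close up coherently around the anti-directed trail --- \emph{is} the theorem: this is precisely the content of Mader's argument, and the paper does not reprove it but quotes it from Mader~\cite{mader1996} (see also Corollary 5.6.20 in~\cite{bang2008digraphs}). What you have written is a plausible research plan, not a proof.

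For orientation, the way this statement is usually obtained is not by exhibiting a small separator of $D'$ directly. Mader's underlying theorem says (roughly) that if every edge of an anti-directed trail $W$ in a $k$-strong digraph is critical --- which holds here since $D$ is minimally strongly $k$-connected --- then some vertex $z$ of $W$ occurs in a ``tail'' position with $d_D^+(z) = k$, or in a ``head'' position with $d_D^-(z) = k$. Such an occurrence carries two distinct trail-edges leaving (resp.\ entering) $z$, all lying in $E \setminus E'$, so $d_{D'}^+(z) \leq k-2$ (resp.\ $d_{D'}^-(z) \leq k-2$), contradicting the fact that a strongly $(k-1)$-connected digraph has minimum out- and in-degree at least $k-1$. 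The contradiction is thus a degree deficiency at a single vertex, and all the difficulty is concentrated in producing that vertex via a careful analysis of how fragments of critical edges sharing a head or a tail interact --- exactly the closure argument your proposal leaves open.
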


The following proposition proves that, if a digraph $D$ is minimally strongly $k$-connected, then for any $U \subseteq V(D)$, the induced subgraph $D[U]$ contains only few edges. This also proves that every strongly $k$-connected digraph $D$ contains a strongly $k$-connected spanning subgraph with at most $2k|V(D)|$ edges, which is slightly weaker than the result of Mader~\cite{mader1985}.

\begin{PROP}\label{prop:fewedges}
For any integer $k \geq 1$, let $D$ be a minimally strongly $k$-connected digraph and $\emptyset \ne U \subseteq V(D)$. Then $|E(D[U])| \leq 2k|U| - k - 1$.
\end{PROP}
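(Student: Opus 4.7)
The plan is to prove this by induction on $k$, peeling off a minimally strongly $(k-1)$-connected spanning subgraph at each step and applying Mader's theorem (Theorem~\ref{thm:mader_minimal}) to the leftover edges.

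For the base case $k = 1$, I would avoid Theorem~\ref{thm:mader_minimal} (which requires $k \geq 2$) and argue directly from Edmonds' branching theorem (Theorem~\ref{thm:branching}). Pick any vertex $u \in V(D)$. Since $D$ is strongly connected, Theorem~\ref{thm:branching} yields an out-branching $T^+$ and an in-branching $T^-$ rooted at $u$. Their union is a strongly connected spanning subgraph of $D$ (any vertex can reach $u$ along $T^-$ and $u$ can reach any vertex along $T^+$), so by minimality $D = T^+ \cup T^-$. Restricting to $U$, each branching contributes at most $|U|-1$ edges to $D[U]$, so $\lvert E(D[U])\rvert \leq 2|U| - 2 = 2k|U| - k - 1$ when $k=1$.

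For the inductive step $k \geq 2$, assume the statement holds for $k-1$. Starting from $D$, greedily delete edges while preserving strong $(k-1)$-connectivity to obtain a spanning subgraph $D_0 \subseteq D$ that is minimally strongly $(k-1)$-connected. By the induction hypothesis applied to $D_0$ and the same set $U$,
\[
\lvert E(D_0[U])\rvert \leq 2(k-1)|U| - (k-1) - 1 = 2(k-1)|U| - k.
\]
By Theorem~\ref{thm:mader_minimal} applied to $D$ and the strongly $(k-1)$-connected spanning subgraph $D_0$, the digraph $D - E(D_0)$ contains no anti-directed trail. The induced subgraph $(D - E(D_0))[U]$ inherits this property, so Proposition~\ref{prop:forest} gives
\[
\lvert E((D - E(D_0))[U])\rvert \leq 2|U| - 1.
\]
Since $E(D[U])$ is the disjoint union of $E(D_0[U])$ and $E((D-E(D_0))[U])$, summing the two bounds yields $\lvert E(D[U])\rvert \leq 2(k-1)|U| - k + 2|U| - 1 = 2k|U| - k - 1$, which closes the induction.

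The only subtle point is handling the base case separately, since Mader's structural theorem does not apply at $k=1$; the branching argument above takes care of this cleanly. The rest of the proof is essentially bookkeeping, with the two key ingredients (Theorems~\ref{thm:branching} and \ref{thm:mader_minimal} together with Proposition~\ref{prop:forest}) already in place.
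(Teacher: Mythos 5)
Your proof is correct and follows essentially the same route as the paper: induction on $k$, peeling off a minimally strongly $(k-1)$-connected spanning subgraph and bounding the leftover edges on $U$ via Theorem~\ref{thm:mader_minimal} and Proposition~\ref{prop:forest}. The only cosmetic difference is the base case, where the paper invokes Corollary~\ref{cor:minimal_arc} (itself proved from Edmonds' Theorem~\ref{thm:branching}) while you inline the same out-/in-branching argument directly.
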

\begin{proof}
We prove by induction on $k$. If $k=1$, the proposition follows from Corollary~\ref{cor:minimal_arc}, as $D$ is minimally strongly 1-arc-connected. Now we may assume that $k \geq 2$. Let $D'$ be a minimally strongly $(k-1)$-connected spanning subgraph of $D$. By the induction hypothesis, $|E(D'[U])| \leq 2(k-1)|U| - k$.

By Theorem~\ref{thm:mader_minimal}, the digraph $D'' := (V,E \setminus E')$ has no anti-directed trail by Theorem~\ref{thm:mader_minimal}. As its induced subgraph $D''[U]$ also has no anti-directed trail, it has at most $2|U|-1$ edges by Proposition~\ref{prop:forest}. Hence
$$|E(D[U])| = |E(D'[U])| + |E(D''[U])| \leq (2(k-1)|U| - k) + (2|U|-1) \leq 2k|U| - k - 1.$$
as desired.
\end{proof}

\section{Lower bounds}\label{sec:lowerbdd}
Inspired by the construction of $\mathcal{T}_{n, k}$ in~\cite[Section 2]{bang2004spanning}, we define a strongly $k$-connected $(n_1 + n_2 + \overline{\Delta} + 1)$-vertex oriented graph $G_{n_1 , n_2 , k, \overline{\Delta}}$ for integers $n_1 , n_2 \geq 2k+1$ as follows. Let $G_1$ be an $(\overline{\Delta}+1)$-vertex digraph with no edges. Let $T_2$ be an $n_1$-vertex tournament obtained from an $\lfloor \frac{n_1 - 1}{2} \rfloor$-th power\footnote{A $k$th power of a digraph $D$ is a digraph that has the vertex-set $V(D)$ and $(u,v)\in E(D)$ when the distance from $u$ to $v$ is at most $k$ in $D$.} of a directed cycle of length $n_1$ by adding arbitrary edges to ensure that $T_2$ is a tournament. Since $\lfloor \frac{n_1 - 1}{2} \rfloor \geq k$, the tournament $T_2$ is strongly $k$-connected and $\delta^+(T_2),\delta^-(T_2) \geq \lfloor \frac{n_1 - 1}{2} \rfloor$. Similarly, let $T_3$ be an $n_2$-vertex tournament obtained from an $\lfloor \frac{n_2 - 1}{2} \rfloor$-th power of a directed cycle of length $n_2$ by adding arbitrary edges. Since $\lfloor \frac{n_2 - 1}{2} \rfloor \geq k$, the tournament $T_3$ is strongly $k$-connected and $\delta^+(T_3),\delta^-(T_3) \geq \lfloor \frac{n_2 - 1}{2} \rfloor$. We may assume that $V(G_1), V(T_2)$, and $V(T_3)$ are disjoint. Let $a_1 , \dots , a_k \in V(T_2)$ and $b_1 , \dots , b_k \in V(T_3)$ be $2k$ distinct vertices and define
\begin{align*}
V(G_{n_1 , n_2 , k,\overline{\Delta}}) &:= V(G_1) \cup V(T_2) \cup V(T_3)\\
E(G_{n_1 , n_2 , k,\overline{\Delta}}) &:= (V(G_1) \times V(T_3)) \cup (V(T_2) \times V(G_1)) \\
&\qquad\cup ((V(T_2) \times V(T_3)) \setminus \left \{a_i b_i \: : \: 1 \leq i \leq k \right \}) \cup \left \{b_i a_i \: : \: 1 \leq i \leq k \right \}
\end{align*}

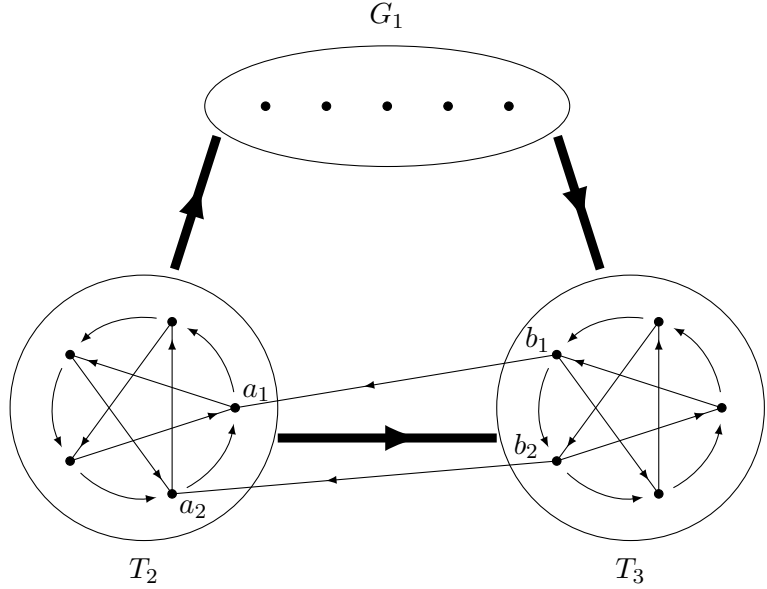
\begin{figure}[h]
\centering
\begin{tikzpicture}[scale=0.8]

\draw[fill=none] (4,0) ellipse [x radius=3,y radius=1];
\node at (4,1.5) {$G_1$};
\draw[fill=none] (0,-5) ellipse [x radius=2.2,y radius=2.2];
\node at (0,-7.7) {$T_2$};
 \draw[fill=none] (8,-5) ellipse [x radius=2.2,y radius=2.2];
\node at (8,-7.7) {$T_3$};

\filldraw[fill=black] (2,0.0) circle (2pt);
\filldraw[fill=black] (3,0.0) circle (2pt);
\filldraw[fill=black] (4,0.0) circle (2pt);
\filldraw[fill=black] (5,0.0) circle (2pt);
\filldraw[fill=black] (6,0.0) circle (2pt);

\def \n {5}
\def \radius {1.5cm}
\def \margin {10} 
\foreach \s in {1,...,\n}
{
  \filldraw[fill=black] (0,-5) + ({360/\n * (\s - 1)}:\radius)  circle (2pt);
  \draw[->, >=latex] (0,-5) + ({360/\n * (\s - 1)+\margin}:\radius)
    arc ({360/\n * (\s - 1)+\margin}:{360/\n * (\s)-\margin}:\radius); 
  \draw[middleupuparrow={latex}, yshift=-5cm] ({360/\n * (\s - 1)}:\radius) -- ({360/\n * (\s + 1)}:\radius) ;
}

\node[xshift=0.28cm, yshift=-3.8cm] at ({360/\n * 0}:\radius) {$a_1$};
\node[xshift=0.28cm, yshift=-4.2cm] at ({360/\n * 4}:\radius) {$a_2$};

\foreach \s in {1,...,\n}
{
  \filldraw[fill=black] (8,-5) + ({360/\n * (\s - 1)}:\radius)  circle (2pt);
  \draw[->, >=latex] (8,-5) + ({360/\n * (\s - 1)+\margin}:\radius)
    arc ({360/\n * (\s - 1)+\margin}:{360/\n * (\s)-\margin}:\radius); 
  \draw[middleupuparrow={latex}, xshift=8cm, yshift=-5cm] ({360/\n * (\s - 1)}:\radius) -- ({360/\n * (\s + 1)}:\radius) ;
}

\node[xshift=6.15cm, yshift=-3.8cm] at ({360/\n * 2}:\radius) {$b_1$};
\node[xshift=6.00cm, yshift=-3.8cm] at ({360/\n * 3}:\radius) {$b_2$};

\draw[middlearrow={latex}, xshift=8cm, yshift=-5cm] ({360/\n * 2}:\radius) -- ($(-8,0)+({360/\n * 0}:\radius)$);
\draw[middlearrow={latex}, xshift=8cm, yshift=-5cm] ({360/\n * 3}:\radius) -- ($(-8,0)+({360/\n * 4}:\radius)$);

\draw[middleuparrow={latex}, line width=1.2mm] (0.5,-2.7) -- (1.2,-0.5);
\draw[middleuparrow={latex}, line width=1.2mm] (6.8,-0.5) -- (7.5,-2.7);
\draw[middleuparrow={latex}, line width=1.2mm] (2.2,-5.5) -- (5.8,-5.5);
\end{tikzpicture}
\caption{The oriented graph $G_{5,5,2,4}$. }
\end{figure}

Note that $G_{n_1 , n_2 , k , \overline{\Delta}}$ has the following properties.
\begin{itemize} 
\item $G_{n_1 , n_2 , k, \overline{\Delta}}$ is strongly $k$-connected. 
\item $\overline{\Delta}(G_{n_1 , n_2 , k, \overline{\Delta}}) \leq \overline{\Delta}$.
\item The minimum in-degree and the minimum out-degree are at least $\min (\lfloor \frac{n_1 - 1}{2} \rfloor , \lfloor \frac{n_2 - 1}{2} \rfloor)$.
\end{itemize}

If $n = n_1 + n_2 + \overline{\Delta} + 1$ and $|n_1 - n_2| \leq 1$, then $\min(n_1 , n_2) \geq \frac{n-\overline{\Delta}-2}{2}$ and $\min \left (\lfloor \frac{n_1 - 1}{2} \rfloor , \lfloor \frac{n_2 - 1}{2} \rfloor \right ) \geq \lfloor \frac{n - \overline{\Delta}}{4} \rfloor - 1$.

Let $D$ be a spanning subgraph of $G_{n_1 , n_2 , k,\overline{\Delta}}$ with $\delta^{+}(D),\delta^{-}(D) \geq k$. Since every vertex in $G_1$ has in-degree at least $k$ in $D$,
\begin{align*}
\sum_{v \in V(T_2)}d_D^+(v) - \sum_{w \in V(T_2)}d_D^-(w) & \geq e_D(V(T_2),V(G_1)) - e_D(V(T_3),V(T_2)) \geq k(\overline{\Delta}+1) - k
\end{align*}
and thus $\sum_{v \in V(T_2)}d_D^+(v) \geq \sum_{w \in V(T_2)}d_D^-(w) + k \overline{\Delta} \geq kn_1 + k\overline{\Delta}$. Hence
\begin{align*}
|E(D)| & = \sum_{u \in V(G_1)}d_D^{+}(u) + \sum_{v \in V(T_2)}d_D^{+}(v) + \sum_{w \in V(T_3)}d_D^{+}(w) \\
& \geq k|V(G_1)| + \sum_{v \in V(T_2)}d_D^{+}(v) + k|V(T_3)| \geq k(n_1 + n_2 + \overline{\Delta} + 1) + k\overline{\Delta}.
\end{align*}

Let us define $T_{n_1 , n_2 , k}$ be an $(n_1 + n_2 + k)$-vertex tournament obtained from an $(n_1 + n_2 + k)$-vertex oriented graph $G_{n_1 , n_2 , k , k-1}$ by replacing $G_1$ with a $k$-vertex transitive tournament $T_1$. Note that $T_{n_1 , n_2 , k}$ has the following properties.
\begin{itemize} 
\item $T_{n_1 , n_2 , k}$ is strongly $k$-connected. 
\item The minimum in-degree and the minimum out-degree are at least $\min (\lfloor \frac{n_1 - 1}{2} \rfloor , \lfloor \frac{n_2 - 1}{2} \rfloor)$.
\end{itemize}
Let $D$ be a spanning subgraph of $T_{n_1 , n_2 , k}$ with $\delta^+ (D), \delta^- (D) \geq k$. Let $\sigma = (v_1 , \dots , v_t)$ be a transitive ordering of the transitive tournament $T_1$. Since $d_D^-(v_i) \geq k$ for $1 \leq i \leq k$, we have $e_D (V(T_2) , v_i) + e_D(V(T_1) , v_i) \geq k$. In particular, $e_D(V(T_2) , v_i) \geq k - i + 1$, and thus $e_D (V(T_2) , V(T_1)) \geq \sum_{i=1}^{k}(k-i+1) = \frac{k(k+1)}{2}.$ Hence
$$\sum_{v \in V(T_2)}d_D^+(v) - \sum_{w \in V(T_2)}d_D^-(w) \geq e_D (V(T_2),V(T_1)) - e_D(V(T_3),V(T_2))  \geq \frac{k(k+1)}{2} - k$$
and thus 
\begin{align*}
|E(D)| & = \sum_{u \in V(G_1)}d_D^{+}(u) + \sum_{v \in V(T_2)}d_D^{+}(v) + \sum_{w \in V(T_3)}d_D^{+}(w) \\
& \geq k|V(G_1)| + \sum_{v \in V(T_2)}d_D^{+}(v) + k|V(T_3)| \geq k(n_1 + n_2 + k) + \frac{k(k-1)}{2}.
\end{align*}

If $n = n_1 + n_2 + k$ and $|n_1 - n_2| \leq 1$, then $\min(n_1 , n_2) \geq \frac{n-k-1}{2}$ and $\min \left (\lfloor \frac{n_1 - 1}{2} \rfloor , \lfloor \frac{n_2 - 1}{2} \rfloor \right ) \geq \lfloor \frac{n - k - 3}{4} \rfloor$.

The construction above proves the following proposition.

\begin{PROP}\label{prop:lowerbdd}
Let $k \geq 1$ and $\overline{\Delta} \geq 0$ be integers.
\begin{itemize} 
\item[$(\rm 1)$] For any integer $n \geq 4k+\overline{\Delta}+3$, there is a strongly $k$-connected $n$-vertex oriented graph $G$ with $\overline{\Delta}(G) \leq \overline{\Delta}$ and $\delta^+(G),\delta^-(G) \geq \lfloor \frac{n-\overline{\Delta}}{4} \rfloor - 1$, such that every spanning subgraph $D$ with $\delta^+(D),\delta^-(D) \geq k$ contains at least $kn + k\overline{\Delta}$ edges.

\item[$(\rm 2)$] For any integer $n \geq 5k+2$, there is a strongly $k$-connected $n$-vertex tournament $T$ with $\delta^+(T),\delta^-(T) \geq \lfloor \frac{n-k-3}{4} \rfloor $, such that every spanning subgraph $D$ with $\delta^+(D),\delta^-(D) \geq k$ contains at least $kn + \frac{k(k-1)}{2}$ edges.
\end{itemize}
\end{PROP}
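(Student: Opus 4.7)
The plan is that both parts of the proposition follow directly from the explicit constructions $G_{n_1, n_2, k, \overline{\Delta}}$ and $T_{n_1, n_2, k}$ built just before the statement, together with the edge-count computations already carried out for each of them. All that remains is to specialize $n_1$ and $n_2$ so that the vertex count matches $n$, and to verify that the hypothesis $n_1, n_2 \geq 2k+1$ required by those constructions holds under the numerical lower bounds on $n$ assumed in each part.

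For part $(1)$, given $n \geq 4k + \overline{\Delta} + 3$, I will take $n_1 = \lceil (n - \overline{\Delta} - 1)/2 \rceil$ and $n_2 = \lfloor (n - \overline{\Delta} - 1)/2 \rfloor$, so that $n_1 + n_2 + \overline{\Delta} + 1 = n$ and $|n_1 - n_2| \leq 1$. Since $n - \overline{\Delta} - 1 \geq 4k + 2$, this forces $\min(n_1, n_2) \geq 2k + 1$, so the construction is legal. The three bullet points listed right after the construction then show that $G := G_{n_1, n_2, k, \overline{\Delta}}$ is strongly $k$-connected, satisfies $\overline{\Delta}(G) \leq \overline{\Delta}$, and has minimum in- and out-degrees at least $\lfloor (n - \overline{\Delta})/4 \rfloor - 1$, while the displayed chain of inequalities ending in $|E(D)| \geq k(n_1 + n_2 + \overline{\Delta} + 1) + k\overline{\Delta}$ yields the required edge lower bound $kn + k\overline{\Delta}$ for every spanning subgraph $D$ with $\delta^+(D), \delta^-(D) \geq k$.

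For part $(2)$, given $n \geq 5k + 2$, I will analogously pick $n_1, n_2$ with $n_1 + n_2 = n - k$ and $|n_1 - n_2| \leq 1$, so that $\min(n_1, n_2) \geq 2k + 1$ once more. Then $T := T_{n_1, n_2, k}$ has exactly $n$ vertices, is strongly $k$-connected, and has the claimed minimum semi-degrees; the edge lower bound $|E(D)| \geq kn + k(k-1)/2$ comes from exactly the computation already written out, where a transitive ordering of the $k$-vertex subtournament $T_1$ is used to force $e_D(V(T_2), V(T_1)) \geq k(k+1)/2$, which then feeds into the same degree-sum identity as in part $(1)$. There is no serious obstacle; the entire content of the proof is the bookkeeping needed to verify the integer constraints on $n_1, n_2$ and to invoke the inequalities and structural properties already established during the construction.
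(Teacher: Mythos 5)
Your proposal is correct and follows exactly the route the paper itself takes: the paper's proof of this proposition consists precisely of the constructions $G_{n_1,n_2,k,\overline{\Delta}}$ and $T_{n_1,n_2,k}$ with balanced $n_1,n_2$ (so that $n_1,n_2 \geq 2k+1$ under the stated bounds on $n$) together with the degree and edge-count computations already displayed, which is exactly your bookkeeping. Nothing is missing.
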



\section{Brief idea	 of the proof of Theorem~\ref{thm:main}}\label{sec:proofsketch}
Before introducing tools used in the proof, we illustrate the brief idea of the proof of $(\rm 1)$ of Theorem~\ref{thm:main} for $\overline\Delta = 0$, where the given digraph $D$ is semicomplete.

In order to provide enough intuition, we assume the simplest case. First, let us assume that we have $3k$ disjoint sets $A_1 , \dots , A_{3k} \subseteq V(D)$ and $3k$ disjoint sets $B_1 , \dots , B_{3k} \subseteq V(D) \setminus \bigcup_{i=1}^{3k}A_i$ such that 
\begin{itemize}
\item $|A_i| = |B_i| = 5$ for $1 \leq i \leq 3k$.
\item $D[A_i]$ contains a spanning transitive tournament $T[A_i]$ with a sink $a_i$ and $D[B_i]$ contains a spanning transitive tournament $T[B_i]$ with a source $b_i$ for $1 \leq i \leq 3k$.
\item Every vertex $v \in V(D) \setminus \left ( \bigcup_{i=1}^{3k}A_i \cup \bigcup_{i=1}^{3k}B_i \right )$ is in-dominated by $A_i$ and out-dominated by $B_i$ for $1 \leq i \leq 3k$.
\end{itemize}

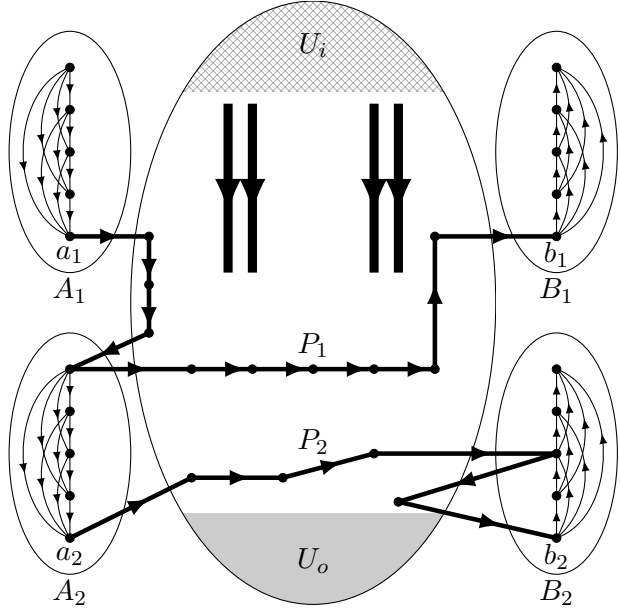
\begin{figure}[h]
\centering
\begin{tikzpicture}[scale=0.8]
\begin{scope}
\clip
  (1,1.0) rectangle (7,2.5);
  \draw[preaction={fill=none}, pattern=crosshatch, pattern color=black!30](4,-2.5) ellipse [x radius=3,y radius=5];
\end{scope}

\begin{scope}
\clip
  (1,-6) rectangle (7,1.0);
  \draw[fill=none] (4,-2.5) ellipse [x radius=3,y radius=5];
\end{scope}

\begin{scope}
\clip
  (1,-7.5) rectangle (7,-6);
  \draw[fill=black!20] (4,-2.5) ellipse [x radius=3,y radius=5];
\end{scope}

\node at (4,1.77) {$U_i$};
\draw[middleuparrow={latex}, line width=1.2mm] (2.6,0.8) -- (2.6,-2);
\draw[middleuparrow={latex}, line width=1.2mm] (3,0.8) -- (3,-2);
\draw[middleuparrow={latex}, line width=1.2mm] (5,0.8) -- (5,-2);
\draw[middleuparrow={latex}, line width=1.2mm] (5.4,0.8) -- (5.4,-2);

\node at (4,-6.77) {$U_o$};

\draw[fill=none] (0,0) ellipse [x radius=1,y radius=2];
\node at (0,-2.3) {$A_1$};
 \draw[fill=none] (0,-5) ellipse [x radius=1,y radius=2];
\node at (0,-7.3) {$A_2$};

\draw[fill=none] (8,0) ellipse [x radius=1,y radius=2];
\node at (8,-2.3) {$B_1$};
 \draw[fill=none] (8,-5) ellipse [x radius=1,y radius=2];
\node at (8,-7.3) {$B_2$};

\filldraw[fill=black] (0,1.4) circle (2pt);
\filldraw[fill=black] (0,0.7) circle (2pt);
\filldraw[fill=black] (0,0.0) circle (2pt);
\filldraw[fill=black] (0,-0.7) circle (2pt);
\filldraw[fill=black] (0,-1.4) circle (2pt);

\draw[middlearrow={latex}] (0,1.4) to[bend right=70] (0,-1.4) ;
\draw[middleuparrow={latex}] (0,1.4) -- (0,0.7) ;
\draw[middleuparrow={latex}] (0,0.7) -- (0,0.0) ;
\draw[middleuparrow={latex}] (0,0.0) -- (0,-0.7) ;
\draw[middleuparrow={latex}] (0,-0.7) -- (0,-1.4) ;
\draw[middlearrow={latex}] (0,1.4) to[bend right=50] (0,-0.7) ;
\draw[middlearrow={latex}] (0,0.7) to[bend right=50](0,-1.4) ;
\draw[middlearrow={latex}] (0,1.4) to[bend right=30] (0,0.0) ;
\draw[middlearrow={latex}] (0,0.7) to[bend right=30](0,-0.7) ;
\draw[middlearrow={latex}] (0,0.0) to[bend right=30](0,-1.4) ;

\node at (0,-1.7) {$a_1$};

\filldraw[fill=black] (0,-3.6) circle (2pt);
\filldraw[fill=black] (0,-4.3) circle (2pt);
\filldraw[fill=black] (0,-5.0) circle (2pt);
\filldraw[fill=black] (0,-5.7) circle (2pt);
\filldraw[fill=black] (0,-6.4) circle (2pt);

\draw[middlearrow={latex}] (0,-3.6) to[bend right=70] (0,-6.4) ;
\draw[middleuparrow={latex}] (0,-3.6) -- (0,-4.3) ;
\draw[middleuparrow={latex}] (0,-4.3) -- (0,-5.0) ;
\draw[middleuparrow={latex}] (0,-5.0) -- (0,-5.7) ;
\draw[middleuparrow={latex}] (0,-5.7) -- (0,-6.4) ;
\draw[middlearrow={latex}] (0,-3.6) to[bend right=50] (0,-5.7) ;
\draw[middlearrow={latex}] (0,-4.3) to[bend right=50](0,-6.4) ;
\draw[middlearrow={latex}] (0,-3.6) to[bend right=30] (0,-5.0) ;
\draw[middlearrow={latex}] (0,-4.3) to[bend right=30](0,-5.7) ;
\draw[middlearrow={latex}] (0,-5.0) to[bend right=30](0,-6.4) ;

\node at (0,-6.7) {$a_2$};

\filldraw[fill=black] (8,1.4) circle (2pt);
\filldraw[fill=black] (8,0.7) circle (2pt);
\filldraw[fill=black] (8,0.0) circle (2pt);
\filldraw[fill=black] (8,-0.7) circle (2pt);
\filldraw[fill=black] (8,-1.4) circle (2pt);

\draw[middlearrow={latex}] (8,-1.4) to[bend right=70] (8,1.4) ;
\draw[middleuparrow={latex}] (8,-1.4) -- (8,-0.7) ;
\draw[middleuparrow={latex}] (8,-0.7) -- (8,0.0) ;
\draw[middleuparrow={latex}] (8,0.0) -- (8,0.7) ;
\draw[middleuparrow={latex}] (8,0.7) -- (8,1.4) ;
\draw[middlearrow={latex}] (8,-1.4) to[bend right=50] (8,0.7) ;
\draw[middlearrow={latex}] (8,-0.7) to[bend right=50](8,1.4) ;
\draw[middlearrow={latex}] (8,-1.4) to[bend right=30] (8,0.0) ;
\draw[middlearrow={latex}] (8,-0.7) to[bend right=30](8,0.7) ;
\draw[middlearrow={latex}] (8,0.0) to[bend right=30](8,1.4) ;

\node at (8,-1.7) {$b_1$};

\filldraw[fill=black] (8,-3.6) circle (2pt);
\filldraw[fill=black] (8,-4.3) circle (2pt);
\filldraw[fill=black] (8,-5.0) circle (2pt);
\filldraw[fill=black] (8,-5.7) circle (2pt);
\filldraw[fill=black] (8,-6.4) circle (2pt);

\draw[middlearrow={latex}] (8,-6.4) to[bend right=70] (8,-3.6) ;
\draw[middleuparrow={latex}] (8,-6.4) -- (8,-5.7) ;
\draw[middleuparrow={latex}] (8,-5.7) -- (8,-5.0) ;
\draw[middleuparrow={latex}] (8,-5.0) -- (8,-4.3) ;
\draw[middleuparrow={latex}] (8,-4.3) -- (8,-3.6) ;
\draw[middlearrow={latex}] (8,-6.4) to[bend right=50] (8,-4.3) ;
\draw[middlearrow={latex}] (8,-5.7) to[bend right=50](8,-3.6) ;
\draw[middlearrow={latex}] (8,-6.4) to[bend right=30] (8,-5.0) ;
\draw[middlearrow={latex}] (8,-5.7) to[bend right=30](8,-4.3) ;
\draw[middlearrow={latex}] (8,-5.0) to[bend right=30](8,-3.6) ;
\node at (8,-6.7) {$b_2$};

\filldraw[fill=black] (1.3,-1.4) circle (2pt);
\filldraw[fill=black] (1.3,-2.2) circle (2pt);
\filldraw[fill=black] (1.3,-3.0) circle (2pt);

\draw[middleuparrow={latex}, line width=0.6mm] (0,-1.4) -- (1.3,-1.4);
\draw[middleupuparrow={latex}, line width=0.6mm] (1.3,-1.4) -- (1.3,-2.2);
\draw[middleupuparrow={latex}, line width=0.6mm] (1.3,-2.2) -- (1.3,-3.0);
\draw[middleuparrow={latex}, line width=0.6mm] (1.3,-3.0) -- (0,-3.6);

\filldraw[fill=black] (2,-3.6) circle (2pt);
\filldraw[fill=black] (3,-3.6) circle (2pt);
\filldraw[fill=black] (4,-3.6) circle (2pt);
\filldraw[fill=black] (5,-3.6) circle (2pt);
\filldraw[fill=black] (6,-3.6) circle (2pt);
\filldraw[fill=black] (6,-1.4) circle (2pt);

\draw[middleuparrow={latex}, line width=0.6mm] (0,-3.6) -- (2,-3.6);
\draw[middleupuparrow={latex}, line width=0.6mm] (2,-3.6) -- (3,-3.6);
\draw[middleupuparrow={latex}, line width=0.6mm] (3,-3.6) -- (4,-3.6);
\draw[middleupuparrow={latex}, line width=0.6mm] (4,-3.6) -- (5,-3.6);
\draw[middleupuparrow={latex}, line width=0.6mm] (5,-3.6) -- (6,-3.6);
\draw[middleuparrow={latex}, line width=0.6mm] (6,-3.6) -- (6,-1.4);
\draw[middleuparrow={latex}, line width=0.6mm] (6,-1.4) -- (8,-1.4);
\node at (4,-3.2) {{\bf $P_1$}};

\filldraw[fill=black] (2,-5.4) circle (2pt);
\filldraw[fill=black] (3.5,-5.4) circle (2pt);
\filldraw[fill=black] (5,-5.0) circle (2pt);
\filldraw[fill=black] (5.4,-5.8) circle (2pt);
\draw[middleuparrow={latex}, line width=0.6mm] (0,-6.4) -- (2,-5.4);
\draw[middleuparrow={latex}, line width=0.6mm] (2,-5.4) -- (3.5,-5.4);
\draw[middleuparrow={latex}, line width=0.6mm] (3.5,-5.4) -- (5,-5.0);
\draw[middleuparrow={latex}, line width=0.6mm] (5,-5.0) -- (8,-5.0);
\draw[middleuparrow={latex}, line width=0.6mm] (8,-5.0) -- (5.4,-5.8);
\draw[middleuparrow={latex}, line width=0.6mm] (5.4,-5.8) -- (8,-6.4);
\node at (4,-4.8) {{\bf $P_2$}};
\end{tikzpicture}
\caption{In-dominating sets $A_1,A_2$ and out-dominating sets $B_1,B_2$ with two paths $P_1$ and $P_2$ connecting pairs of vertices $(a_1,b_1)$ and $(a_2,b_2)$, respectively. The paths $P_1$ and $P_2$ may intersect other vertices in $A \cup B$. The thick lines depict that after removing one  vertex in $V(D)$, each remaining vertex in $V(D)\setminus (A \cup B)$ can be reached from a vertex in $U_i$ and can reach to a vertex in $U_o$ via sparse linkage structure.}
\end{figure}

We may assume that $d_{D[\left \{a_1 , \dots , a_{3k} \right \}]}^-(a_1) \geq \dots \geq d_{D[\left \{a_1 , \dots , a_{3k} \right \}]}^-(a_{3k})$ and $d_{D[\left \{b_1 , \dots , b_{3k} \right \}]}^+ (b_1) \geq \dots \geq d_{D[\left \{b_1 , \dots , b_{3k} \right \}]}^+ (b_{3k})$ by permuting indices in $[3k]$. By Lemma~\ref{lem:manydeg}, it follows that $d_{D[\left \{a_1 , \dots , a_{3k} \right \}]}^- (a_i) \geq k$ and $d_{D[\left \{b_1 , \dots , b_{3k} \right \}]}^+ (b_i) \geq k$ for $1 \leq i \leq k$.

Since $D$ is strongly $k$-connected, we can use Menger's theorem. There exists a permutation $\sigma : [k] \to [k]$ such that for $1 \leq i \leq k$, there exists a path $P_i$ from $a_i$ to $b_{\sigma(i)}$ in $D$. We may assume that $\sigma$ is an identity map by permuting indices in $[k]$. As we only permute indices in $[k]$ here, it is still preserved that $d_{D[\left \{a_1 , \dots , a_{3k} \right \}]}^- (a_i) \geq k$ and $d_{D[\left \{b_1 , \dots , b_{3k} \right \}]}^+ (b_i) \geq k$ for $1 \leq i \leq k$.

Let $A = \bigcup_{i=1}^{3k}A_i$ and $B = \bigcup_{i=1}^{3k}B_i$. Using \emph{escapers} (see Lemma~\ref{lem:escape}), there exist a set $E_{\rm escape} \subseteq E(D)$ of edges and a set $V_{\rm out} \subseteq V(D) \setminus (A \cup B)$ of vertices such that $|E_{\rm escape}| = O(k^2)$ and $|V_{\rm out}| = O(k^2)$, where they allow vertices in $A \cup B$ can easily escape from $A \cup B$ using these edges, in the following sense.
\begin{itemize}
\item[$(\rm A4.1)$] For any $S \subseteq V(D)$ with $|S| \leq k-1$ and $u \in (A \cup B) \setminus S$, there is a path from $u$ to a vertex in $V_{\rm out}$ in $D-S$ using only edges in $E_{\rm escape}$.
\item[$(\rm A4.2)$] For any $S \subseteq V(D)$ with $|S| \leq k-1$ and $u \in (A \cup B) \setminus S$, there is a path from a vertex in $V_{\rm out}$ to $u$ in $D-S$ using only edges in $E_{\rm escape}$.
\end{itemize}

Now we use the sparse linkage structure introduced in Section 2. Let us apply Lemma~\ref{lem:digraph1'} to $D[V_{\rm out}]$ and $D[V(D) \setminus (A \cup B \cup \bigcup_{i=1}^{k}V^{\rm int}(P_i) \cup V_{\rm out})]$, where we get a spanning subgraph $D'$ of $D[V_{\rm out}]$, $U_i',U_o' \subseteq V_{\rm out}$, a spanning subgraph $D''$ of $D[V(D) \setminus (A \cup B \cup \bigcup_{i=1}^{k}V^{\rm int}(P_i) \cup V_{\rm out})]$ and $U_i'' , U_o'' \subseteq V(D) \setminus (A \cup B \cup \bigcup_{i=1}^{k}V^{\rm int}(P_i) \cup V_{\rm out})$. Similarly, let us apply Lemma~\ref{lem:digraph2'} to $D[\bigcup_{i=1}^{k}V^{\rm int}(P_i) \setminus (A \cup B \cup V_{\rm out})]$, where we get a spanning subgraph $D'''$ of $D[\bigcup_{i=1}^{k}V^{\rm int}(P_i) \setminus (A \cup B \cup V_{\rm out})]$ and $U_i''',U_o''' \subseteq \bigcup_{i=1}^{k}V^{\rm int}(P_i) \setminus (A \cup B \cup V_{\rm out})$. Given any $S \subseteq V(D)$ with $|S| \leq k-1$, they satisfy the following.
\begin{itemize}
\item $|E(D')| \leq k|V_{\rm out}| - k$.
\item $|E(D'')| \leq k|V(D) \setminus (A \cup B \cup \bigcup_{i=1}^{k}V^{\rm int}(P_i) \cup V_{\rm out})| - k$.
\item $|E(D''')| \leq (k-1)|\bigcup_{i=1}^{k}V^{\rm int}(P_i) \setminus (A \cup B \cup V_{\rm out})| + (k-1)$.
\item $|U_i'|, |U_o'|,|U_i''|, |U_o''|,|U_i'''|,|U_o'''| \leq 2k-1$.	
\item[($\rm B4.1$)] For any vertex $w \in V(D') \setminus S$, there exist a path from $w$ to a vertex in $U_o' \setminus S$ in $D'-S$ and a path from a vertex in $U_i' \setminus S$ to $w$ in $D'-S$.
\item[($\rm B4.2$)] For any vertex $w \in V(D'') \setminus S$, there exist a path from $w$ to a vertex in $U_o'' \setminus S$ in $D''-S$ and a path from a vertex in $U_i'' \setminus S$ to $w$ in $D''-S$.
\item[($\rm B4.3$)] For any vertex $w \in V(D''') \setminus S$, there exist a path from $w$ to a vertex in $(U_o''' \cup \left \{b_1 , \dots , b_k \right \}) \setminus S$ in $D'''-S$ and a path from a vertex in $(U_i''' \cup \left \{a_1 , \dots , a_k \right \}) \setminus S$ to $w$ in $D'''-S$.
\end{itemize}

In the following section, an object \emph{absorber} will be related to these properties above. Let 
$$U_o := U_o' \cup U_o'' \cup U_o''' \:\:,\:\:U_i := U_i' \cup U_i'' \cup U_i'''.$$

For any $u \in U_o$ and $1 \leq i \leq k$, as $u \in V(D) \setminus (A \cup B)$, $u$ is in-dominated by $A_i$ and there exists a path $P_{u,i}$ of length at most two from $u$ to $a_i$, since $D[A_i]$ contains a spanning transitive subtournament with a sink $a_i$. Similarly, for any $v \in U_i$ and $1 \leq i \leq k$, $v$ is out-dominated by $B_i$ and there exists a path $Q_{v,i}$ of length at most two from $b_i$ to $v$, since $D[B_i]$ contains a spanning transitive subtournament with a source $b_i$.

Let us define 
$$E' := E(D[A \cup B]) \cup \bigcup_{u \in U_o}\bigcup_{i=1}^{3k} E(P_{u,i}) \cup \bigcup_{v \in U_i}\bigcup_{i=1}^{3k} E(Q_{v,i}).$$ 
Then $|E'| = O(k^2)$, as $|U_o| \leq 6k$ and $|U_i| \leq 6k$.

Let $S \subseteq V(D)$ with $|S| \leq k-1$ and $u \in U_o \setminus S$. For any $1 \leq t \leq k$ with $a_t \notin S$, we claim that there exists a path from $u$ to $a_t$ in $D$ only using edges in $E'$. Indeed, let $i \in N_D^-(a_t)$ be an index with $A_i \cap S = \emptyset$, which is guaranteed by $d_{D[\left \{a_1 , \dots , a_{3k} \right \}]}^-(a_t) \geq k$ and the disjointness of $A_1 , \dots , A_{3t}$. Since $A_i \cap S = \emptyset$ and $a_t \notin S$, the path $P_{u,t}^* := P_{u,i} \cup (a_i , a_t)$ does not intersect $S$ and is from $u$ to $a_t$ only using edges in $E'$.  Similarly, for any $v \in U_i \setminus S$ and $b_t \notin S$ with $1 \leq t \leq k$, there exists a path from $b_t$ to $v$ in $D$ only using edges in $E'$. In summary, 
\begin{itemize}
\item[($\rm C4.1$)] For any $S \subseteq V(D)$ with $|S| \leq k-1$, $u \in U_o \setminus S$ and $a_t \notin S$ with $1 \leq t \leq k$, there exists a path from $u$ to $a_t$ in $D$ only using edges in $E'$.
\item[($\rm C4.2$)] For any $S \subseteq V(D)$ with $|S| \leq k-1$, $v \in U_i \setminus S$ and $b_t \notin S$ with $1 \leq t \leq k$, there exists a path from $b_t$ to $v$ in $D$ only using edges in $E'$
\end{itemize}

In the following section, an object \emph{hub} will attain these properties above. Now, let $D_{\rm sparse}$ be a spanning subgraph of $D$ with the edge set
$$ \bigcup_{i=1}^{k}E(P_i) \cup E_{\rm escape} \cup E(D') \cup E(D'') \cup E(D''') \cup E'$$

Then it is straightforward to see that $|E(D_{\rm sparse})| = k|V(D)| + O(k^2)$. Now we claim that $D_{\rm sparse}$ is strongly $k$-connected. Let $S \subseteq V(D)$ with $|S| \leq k-1$ and $u,v \in V(D) \setminus S$. We aim to find a path from $u$ to $v$ in $D_{\rm sparse}-S$. Let $i \in [k]$ be an index such that $V(P_i) \cap S = \emptyset$. 

Now it suffices to find a path from $u$ to $u^* \in U_o \setminus S$ in $D_{\rm sparse}-S$ and a path from $v^* \in U_i \setminus S$ to $v$ in $D_{\rm sparse}-S$. Indeed, by $(\rm C4.1)$ and $(\rm C4.2)$ we have a path from $u^*$ to $a_i$ and a path from $b_i$ to $v^*$. Together with the path $P_i$, there exists a path from $u$ to $v$ in $D_{\rm sparse}-S$ as desired.
\begin{itemize}
\item If $u \in A \cup B$, then by $(\rm A4.1)$, there exists a path from $u$ to $u' \in V_{\rm out}$ in $D_{\rm sparse}-S$. By $(\rm B4.1)$, there exists a path from $u'$ to $u^* \in U_o \setminus S$ in $D_{\rm sparse}-S$.

\item If $u \in \bigcup_{i=1}^{k} V^{\rm int}(P_i) \setminus V_{\rm out}$, then by $(\rm B4.3)$ there is a path $P$ from $u$ to a vertex $w \in U_o \cup \left \{b_1 , \dots , b_k \right \}$ in $D'''-S$. If $w \in U_o$, then let $u^* := w$. Otherwise, $w \in \left \{b_1 , \dots , b_k \right \} \subseteq A \cup B$, where this case has been already considered above.

\item If $u \in V(D) \setminus (A \cup B \cup \bigcup_{i=1}^{k} V^{\rm int}(P_i))$, then by $(\rm B4.1)$ and $(\rm B4.2)$ there is a path from $u$ to a vertex $u^* \in U_o \setminus S$ in $D_{\rm sparse}-S$.
\end{itemize}

Similarly, one can find a path from a vertex $v^* \in U_i \setminus S$ to $v$. This proves that $D_{\rm sparse}$ is strongly $k$-connected.

Note that this proof only works when for $1 \leq i \leq 3k$, every vertex in $V(D) \setminus (A \cup B)$ is in-dominated by $A_i$, is also out-dominated by $B_i$, and $A_i$ and $B_i$ have the small size.  As we cannot guarantee the existence of these subsets of vertices, this ideal situation might not happen. Nevertheless, we are able to force all vertices in $V(D) \setminus (A \cup B)$ to satisfy the conditions close to the ideal one as follows.

Using Lemma~\ref{lem:dom}, we choose 5-indominators $A_1 , \dots , A_{5k}$ and 5-outdominators $B_1 , \dots , B_{5k}$ (see Definitions~\ref{def:indom} and~\ref{def:outdom}). Each of these 5-indominators $A_i$ (5-outdominators $B_i$) would in-dominate (out-dominate, respectively) all vertices in $V(D) \setminus (A \cup B)$ but a few exceptional vertices $U_i^+$ ($U_i^-$, respectively). As the size of $U_i^+$ or $U_i^-$ could be $\Omega(n)$, we utilise the following two observations to reduce the size. First, we do not need to force all vertices to in/out-dominated by all $5k$ 5-in/outdominators. Second, if the size of $U_i^+$ ($U_i^-$) is big enough, then all vertices in $U_i^+$ ($U_i^-$, respectively) have large out-degree (in-degree, respectively) so they can easily escape from $U_i^+$ ($U_i^-$, respectively).

Hence, we regard any vertex $v \in V(D) \setminus (A \cup B)$ as an exceptional vertex only when there are more than $k$ indices $i \in [5k]$ such that $v$ is not in-dominated by $A_i$ (not out-dominated by $B_i$) and $|U_i^+|$ ($|U_i^-|$, respectively) is not big enough. Let $O^+$ ($O^-$, respectively) be the set of all these vertices in $V(D) \setminus (A \cup B)$ and $O^* := O^+ \cup O^-$ be the set of exceptional vertices. In summary, it can be shown that 5-indominators $A_1 , \dots , A_{5k}$, 5-outdominators $B_1 , \dots , B_{5k}$ and $O^*$ attain the following properties (see Lemma~\ref{lem:trioexist}).
\begin{itemize}
\item $|O^*| = O(k)$.
\item For any vertex $w \in V(D) \setminus (A \cup B \cup O^*)$, there exist a path of length at most two from $w$ to a vertex in $A_i$ for at least $4k$ indices $i \in [5k]$, and a path of length at most two from a vertex in $B_i$ to $w$ for at least $4k$ indices $i \in [5k]$. 
\end{itemize}

Indeed, as every vertex in $V(D) \setminus (A \cup B)$ is not in/out-dominated by all 5-in/outdominators, we cannot simply follow the proof illustrated in this section and it is required to develop more ideas. In the following section, we introduce the objects according to the modification discussed as above.

\section{Basic objects in the construction}\label{sec:object}
As the proof of the main result consists of many technical parts, we divide the proof into statements constructing objects called \emph{dominators}, \emph{trios}, \emph{escapers}, \emph{hubs}, and \emph{absorbers}. Dominators are the most basic objects, very simple but useful in controlling the length of many disjoint paths. A collection of many dominators with many good properties are called a trio, which is  our main interest when involving collections of many dominators. Based on trios, we construct hubs and absorbers, and combine them into a highly connected spanning subgraph with few edges to prove Theorem~\ref{thm:main}.

\subsection{Dominators}
In this subsection, we define indominators and outdominators in digraphs, which are the most basic objects in constructing a sparse highly connected spanning subgraph.

\begin{DEFN}\label{def:indom}
Let $t \geq 1$ be an integer. A \emph{$t$-indominator} is a quadtuple $(D, A,x,a)$ such that $D$ is a directed multigraph, $A$ is a subset of $V(D)$ with at most $t$ vertices, and $x,a \in A$ satisfying the following.
\begin{itemize} 
\item[$(\rm ID1)$] $D[A]$ contains a spanning transitive tournament with a source $x$ and a sink $a$.
\item[$(\rm ID2)$] $x$ has at least $2^{t-1} |U^+|$ out-neighbours in $D$, where $U^+ := \bigcap_{v \in A}{N_D^+(v)} \setminus \bigcup_{v \in A}{N_D^-(v)}$.
\end{itemize}
\end{DEFN}

\begin{DEFN}\label{def:outdom}
Let $t \geq 1$ be an integer. A \emph{$t$-outdominator} is a quadtuple $(D , B,x',b)$ such that $D$ is a directed multigraph, $B$ is a subset of $V(D)$ with at most $t$ vertices, and $x',b \in B$ satisfying the following.
\begin{itemize} 
\item[$(\rm OD1)$] $D[B]$ contains a spanning transitive tournament with a source $b$ and a sink $x'$.
\item[$(\rm OD2)$] $x'$ has at least $2^{t-1} |U^-|$ in-neighbours in $D$, where $U^- := \bigcap_{v \in B}{N_D^-(v)} \setminus \bigcup_{v \in B}{N_D^+(v)}$.
\end{itemize}
\end{DEFN}

The following lemma guarantees the existence of a $t$-in/outdominator in directed multigraphs. This is a variation of~\cite[Lemma 2.3]{pokrovskiy2015highly} proved for tournaments.

\begin{LEM}\label{lem:dom}
Let $t\geq 1$ be an integer. For each vertex $x$ of a directed multigraph $D$, there exist $A \subseteq V(D)$ and $a \in A$ such that $(D , A,x,a)$ is a $t$-indominator, and $B \subseteq V(D)$ and $b \in B$ such that $(D,B,x,b)$ is a $t$-outdominator.
\end{LEM}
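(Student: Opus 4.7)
The plan is to build $A$ greedily: start with $A_1 := \{x\}$ and $a_1 := x$, and extend one vertex at a time, each addition chosen to roughly halve the set $U^+_i := \bigcap_{v \in A_i} N_D^+(v) \setminus \bigcup_{v \in A_i} N_D^-(v)$. Note that $U^+_1 \subseteq N_D^+(x)$, so $|U^+_1| \leq |N_D^+(x)|$. At step $i$, if $|N_D^+(x)| \geq 2^{t-1}|U^+_i|$ then $(D, A_i, x, a_i)$ already satisfies $(\rm ID2)$, and since $(\rm ID1)$ will be maintained inductively, the construction stops. Otherwise I extend to $A_{i+1} := A_i \cup \{w\}$ with $a_{i+1} := w$ for a carefully chosen $w \in U^+_i$ (note $|U^+_i| \geq 1$ in this case).

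\textbf{Halving step.} A direct computation shows $U^+_{i+1} = U^+_i \cap N_D^+(w) \setminus N_D^-(w)$ for any choice of $w \in U^+_i$. Writing $f(w) := |U^+_i \cap N_D^+(w) \setminus N_D^-(w)|$, this quantity counts the vertices $w' \in U^+_i \setminus \{w\}$ with $ww' \in E(D)$ and $w'w \notin E(D)$. Summing $f$ over $w \in U^+_i$ counts ordered pairs of distinct vertices in $U^+_i$ joined by a one-way edge, and each unordered pair $\{w, w'\}$ contributes to at most one such ordered pair (the two directions of ``one-way'' are mutually exclusive). Hence $\sum_{w \in U^+_i} f(w) \leq \binom{|U^+_i|}{2}$, so some $w \in U^+_i$ achieves $f(w) \leq (|U^+_i|-1)/2$. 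Because $w \in U^+_i$ is an out-neighbour of every vertex in $A_i$, appending $w$ to the transitive ordering of $A_i$ yields a transitive tournament on $A_{i+1}$ with sink $w$, preserving $(\rm ID1)$.

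\textbf{Termination and outdominator.} Iterating the halving gives $|U^+_{i+1}| \leq |U^+_i|/2$, so after at most $t-1$ extensions one has $|U^+_t| \leq |U^+_1|/2^{t-1} \leq |N_D^+(x)|/2^{t-1}$, which forces $(\rm ID2)$ to hold. Because $|A_t| \leq t$, the quadruple $(D, A_t, x, a_t)$ is the desired $t$-indominator. The $t$-outdominator will be obtained by applying the argument above to the digraph $D^R$ obtained from $D$ by reversing every edge, since a $t$-indominator of $D^R$ at $x$ translates directly into a $t$-outdominator of $D$ at $x$ (the source and sink of the transitive tournament swap, and $U^+$ in $D^R$ equals $U^-$ in $D$). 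The only non-routine ingredient is the halving step in a directed multigraph rather than a tournament, but the ordered-pair argument above handles this cleanly, since it uses only that each unordered pair of vertices carries at most one one-way edge.
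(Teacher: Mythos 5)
Your proof is correct and follows essentially the same route as the paper: a greedy construction of the transitive tournament in which each newly added vertex is chosen (by a counting/pigeonhole argument) so that the relevant common out-neighbourhood set at least halves, with the outdominator obtained by reversing all edges. The only difference is cosmetic: the paper first passes to an auxiliary oriented graph $G$ (deleting multiple edges and one edge of each $2$-cycle) and halves $\bigcap_k N_G^+(v_k)$, whereas you track $U_i^+$ directly in $D$ via the one-way-edge count, which amounts to the same estimate.
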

\begin{proof}
We only prove that there exist $A \subseteq V(D)$ and $a \in A$ such that $(D,A,x,a)$ is a $t$-indominator. The rest of the proof follows by reversing orientations of all edges.

Let $G$ be an oriented graph obtained from $D$ by removing multiple edges and exactly one edge from each directed 2-cycle. Let $V_1 := N_{G}^{+}(x)$ and $v_1 := x$. Let $s$ be the maximum integer that satisfies $1 \leq s \leq t$ and $v_1 , \dots , v_s \in V(D)$ and $V_1 , \dots, V_s \subseteq V(D)$ satisfying the following properties.

\begin{itemize}[noitemsep, nolistsep]
\item[$(\rm i)$] For $1 \leq i < j \leq s$, $v_j \in N_{G}^{+}(v_i)$.
\item[$(\rm ii)$] For $1 \leq i \leq s$, $V_i := \bigcap_{k=1}^{i}{N_{G}^{+}(v_k)}$.
\item[$(\rm iii)$] For $1 \leq i < s$, $|V_{i+1}| \leq \frac{1}{2}|V_i|$.
\end{itemize}

Note that such $s$ exists as $(\rm i)$, $(\rm ii)$, and $(\rm iii)$ hold for $s=1$. We claim that $V_s = \emptyset$ or $s=t$. Otherwise, let $v_{s+1} \in V_s$ with $d_{G[V_s]}^{+}(v_{s+1}) \leq \frac{|V_s|}{2}$. Indeed, since $G$ is an oriented graph, $G[V_s]$ contains at most $\frac{|V_s|(|V_s|-1)}{2}$ edges, proving that there is a vertex in $V_s$ with out-degree at most $\frac{|V_s|-1}{2}$. Let us define $V_{s+1} := N_{G[V_s]}^+(v_{s+1}) = V_s \cap N_G^{+}(v_{s+1})$, then $|V_{s+1}| \leq \frac{|V_s|}{2}$, contradicting the maximality of $s$.

Therefore, $V_s = \emptyset$ or $s=t$. Let us define $A := \left \{v_1 , \dots , v_s \right \}$ with $a := v_s$. Then $G[A]$ is a transitive tournament with a source $x$ and a sink $a$. Let $V^+ := V_s = \bigcap_{k=1}^{s}{N_G^{+}(v_k)}$. Since $|V^+| \leq 2^{-t+1} |V_1|$ by (iii), implying $|N_G^{+}(x)| = |V_1| \geq 2^{t-1} |V^+|$. Now we claim that
$$\bigcap_{v \in A}N_D^{+}(v) \setminus \bigcup_{v \in A}N_D^{-}(v) \subseteq \bigcap_{v \in A}N_G^+(v).$$

For every $w \in \bigcap_{v \in A}N_D^{+}(v) \setminus \bigcup_{v \in A}N_D^{-}(v)$, we have $w \in \bigcap_{v \in A}{N_G^+(v)}$ otherwise there exists $v \in A$ such that $wv,vw \in E(D)$, implying that $w \in \bigcup_{v \in A}{N_D^{-}(v)}$ and contradicting the assumption on $w$. Therefore, $|V^+| \geq |U^+|$ and we have

$$|N_D^+(x)| \geq |N_G^+(x)| \geq 2^{t-1} |V^+| \geq 2^{t-1} |U^+|,$$
where $U^+ := \bigcap_{v \in A}{N_D^+(v)} \setminus \bigcup_{v \in A}{N_D^-(v)}$. This proves that $(D, A,x,a)$ is a $t$-indominator.
\end{proof}

Throughout the proof,  it is worth noting that $t$ will be always 5 when regarding $t$-indominators and $t$-outdominators.


\subsection{Trios}
In Section~\ref{sec:proofsketch}, we sketched the proof provided that every vertex in $V(D) \setminus (A \cup B)$ is in-dominated by $A_1 , \dots , A_{3k}$ and out-dominated by $B_1 , \dots , B_{3k}$. However, we cannot guarantee these sets in/out-dominating all other vertices, but the sets in/out-dominating almost all other vertices by Lemma~\ref{lem:dom}. In this subsection, we introduce the object called a \emph{trio}, allowing that most of the  vertices can reach to many 5-indominators and can be reached from many 5-outdominators by paths of length at most two. The other subsections will introduce other objects to follow the sketched proof in Section~\ref{sec:proofsketch} according to this modification.

\begin{DEFN}\label{def:trio}
Let $d,k,t_1,t_2 \geq 1$, $m \geq k$, $\overline{\Delta} \geq 0$ be integers, and $u > 0$ be a real number. Let $D$ be a directed multigraph with $\overline{\Delta}(D) \leq \overline{\Delta}$. A 3-tuple $(\mathcal{A} , \mathcal{B} , O^*)$ is called a \emph{$(t_1 , t_2 , d , m , u)$-{trio}} in $D$ 
if $\mathcal{A}$ is a collection of $m$ distinct 5-indominators $\left \{(D_i , A_i , x_i , a_i) \right \}_{i=1}^{m}$, and $\mathcal{B}$ is a collection of $m$ distinct 5-outdominators $\left \{(D_i' , B_i , x_i' , b_i ) \right \}_{i=1}^{m}$, and a subset $O^* \subseteq V(D)$ of vertices satisfying the following properties, where $U_i^+ := \bigcap_{w \in A_i}N_{D_i}^+(w) \setminus \bigcup_{w \in A_i}N_{D_i}^-(w)$ and $U_i^- := \bigcap_{w \in B_i}N_{D_i'}^-(w) \setminus \bigcup_{w \in B_i}N_{D_i'}^+(w)$.

\begin{itemize} 
\item[$(\rm T1)$] For every $i \in [m]$, $D_i$ is a subgraph of $D$, and contains $D - (\bigcup_{i=1}^{m} A_i \cup \bigcup_{i=1}^{m} B_i)$ as a subgraph.

\item[$(\rm T2)$] For every $i \in [m]$, $D_i'$ is a subgraph of $D - \bigcup_{i=1}^{m}A_i$, and contains $D - (\bigcup_{i=1}^{m} A_i \cup \bigcup_{i=1}^{m} B_i)$ as a subgraph.

\item[$(\rm T3)$] $A_1 , \dots , A_{m}$, $B_1 , \dots , B_{m}$ are disjoint subsets.

\item[$(\rm T4)$] For every $i \in [k]$, $|N_{D[\left \{a_1 , \dots , a_m \right \}]}^- (a_i)| \geq \frac{m-k-\overline{\Delta}}{2}$ and $|N_{D[\left \{b_1 , \dots , b_m \right \}]}^+ (b_i)| \geq \frac{m-k-\overline{\Delta}}{2}$.

\item[$(\rm T5)$] For every $v \in V(D) \setminus (\bigcup_{i=1}^{m}A_i \cup \bigcup_{i=1}^{m}B_i \cup O^*)$, there are at least $m - t_1 - t_2$ indices $i \in [m]$ such that either $v$ is in-dominated by $A_i$, or $v$ is in $U_i^+$ with $|U_i^+| \geq u$.

\item[$(\rm T6)$] For every $v \in V(D) \setminus (\bigcup_{i=1}^{m}A_i \cup \bigcup_{i=1}^{m}B_i \cup O^*)$, there are at least $m - t_1 - t_2$ indices $i \in [m]$ such that either $v$ is out-dominated by $B_i$, or $v$ is in $U_i^-$ with $|U_i^-| \geq u$.

\item[$(\rm T7)$] For every $u \in U_i^+$ with $i \in [m]$ and $\left | U_i^+ \right | \geq u$, the vertex $u$ has at least $d + \left | U_i^+ \right |$ out-neighbours in $D_i$.

\item[$(\rm T8)$] For every $u \in U_i^-$ with $i \in [m]$ and $\left | U_i^- \right | \geq u$, the vertex $u$ has at least $d + \left | U_i^- \right |$ in-neighbours in $D_i'$.

\item[$(\rm T9)$] $|O^*|$ is small enough; $|O^*|$ is at most $\frac{2mu}{t_1} + \frac{10 \overline{\Delta} m}{t_2}$, and if $t_2 \geq \overline\Delta$ then $|O^*| \leq \frac{2mu}{t_1}$.
\end{itemize}
\end{DEFN}

The following lemma guarantees a $(t_1,t_2,d,m,u)$-trio for dense digraphs.

\begin{LEM}\label{lem:trioexist}
Let $d,k,n,m,t_1 , t_2 \geq 1$, $\overline{\Delta} \geq 0$ be integers with $m \geq k$, and $u > 0$ be a real number. Let $D$ be an $n$-vertex directed multigraph with $\overline{\Delta}(D) \leq \overline{\Delta}$. If $n \geq 10m$ and $u \geq \frac{d}{15}$, then $D$ contains a $(t_1 , t_2,d,m,u)$-trio $(\mathcal{A},\mathcal{B},O^*)$.
\end{LEM}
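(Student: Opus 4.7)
The plan is to build $\mathcal{A}$ and $\mathcal{B}$ iteratively via Lemma~\ref{lem:dom}, pick $O^*$ at the end, and verify (T1)--(T9) in turn. I will construct $A_1, \ldots, A_m$ in order: having chosen $A_1, \ldots, A_{i-1}$, set $D_i := D - \bigcup_{j<i} A_j$ (with $|V(D_i)| \geq n - 5(m-1) \geq 5m+5$, using $n \geq 10m$), select a convenient root $x_i \in V(D_i)$, and apply Lemma~\ref{lem:dom} to $D_i$ to produce a $5$-indominator $(D_i, A_i, x_i, a_i)$. The outdominators $B_1, \ldots, B_m$ are constructed in the same fashion inside $D_i' := D - \bigcup_j A_j - \bigcup_{j<i} B_j$, applying Lemma~\ref{lem:dom} to the edge-reversed digraph. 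Properties (T1)--(T3) hold immediately from these definitions: each $D_i$ and $D_i'$ sits between $D - (\bigcup_j A_j \cup \bigcup_j B_j)$ and the respective ambient digraph, and the families $(A_j)$, $(B_j)$ are pairwise disjoint by construction.

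For (T4), I apply Lemma~\ref{lem:manydeg} to $D[\{a_1,\ldots,a_m\}]$ (an $m$-vertex multigraph with the same $\overline\Delta$-bound), extract $k$ vertices with at least $(m-k-\overline\Delta)/2$ in-neighbours inside the set, and relabel the indominators so these are $a_1,\ldots,a_k$; an analogous step handles the $b_i$'s. Properties (T7) and (T8) then follow from (ID2)/(OD2) combined with $u \geq d/15$: whenever $|U_i^+| \geq u$, (ID2) applied to the indominator living in $D_i$ gives $|N_{D_i}^+(x_i)| \geq 16|U_i^+|$, and $15|U_i^+| \geq 15u \geq d$ upgrades this to $|N_{D_i}^+(x_i)| \geq |U_i^+| + d$; the bound for $x_i'$ and $U_i^-$ is symmetric.

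Finally, I define $O^*$ by cataloguing the two failure modes of (T5). For a vertex $v \notin \bigcup_j A_j \cup \bigcup_j B_j$, call $v$ \emph{type~A bad at index $i$} if $A_i$ contains a non-neighbour of $v$ (the precise condition for $v$ to fail to be in-dominated by $A_i$ while also lying outside $U_i^+$), and \emph{type~B bad at $i$} if $v \in U_i^+$ with $|U_i^+| < u$. Disjointness of the $A_i$'s together with $v$ having at most $\overline\Delta$ non-neighbours caps type~A badness of $v$ at $\overline\Delta$ indices (so this mode vanishes as soon as $t_2 \geq \overline\Delta$); a routine double-counting using $\sum_i |\{v : v \text{ has a non-neighbour in } A_i\}| \leq 5\overline\Delta m$ bounds the vertices type~A bad at more than $t_2$ indices by $5\overline\Delta m / t_2$, and $\sum_{i:\,|U_i^+|<u} |U_i^+| < um$ bounds those type~B bad at more than $t_1$ indices by $um/t_1$. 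Taking $O^+$ to be their union, defining $O^-$ symmetrically via the $B_i$'s and $U_i^-$'s, and setting $O^* := O^+ \cup O^-$ gives (T5) and (T6) by construction and fulfils the size bound $|O^*| \leq 2um/t_1 + 10\overline\Delta m / t_2$ (reducing to $2um/t_1$ when $t_2 \geq \overline\Delta$), verifying (T9). The main subtlety I anticipate is (T7)/(T8): if they are read so that \emph{every} vertex of $U_i^{\pm}$ must carry the required out- or in-degree bound (rather than only $x_i$ or $x_i'$), then each root must additionally be drawn from a preselected pool of high-degree vertices, which Lemma~\ref{lem:manydeg} supplies, so that $U_i^{\pm}$ inherits the degree bound through the containment $U_i^+ \subseteq N^+(x_i)$.
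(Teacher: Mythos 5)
Your construction of the dominators, the index permutation via Lemma~\ref{lem:manydeg} for (T4), and the two-failure-mode double counting for (T5), (T6), (T9) all match the paper's argument and are fine. The genuine gap is in (T7)/(T8). These conditions really do demand the degree bound for \emph{every} vertex $u\in U_i^{+}$ (resp.\ $U_i^{-}$) --- and they are used in exactly that strength later, in Lemma~\ref{lem:hubexist}, where an arbitrary $u\in W_o\cap U_i^{+}$ must satisfy $|N_{D_i}^{+}(u)|\ge d+|U_i^{+}|$ --- whereas your main argument only yields the bound for the root $x_i$ via (ID2). Your proposed repair does not work: drawing $x_i$ from a pool of high-degree vertices supplied by Lemma~\ref{lem:manydeg} gives no information about the out-degrees of the vertices of $U_i^{+}$, and the claimed ``inheritance through $U_i^{+}\subseteq N_{D_i}^{+}(x_i)$'' is false --- being an out-neighbour of a vertex of large out-degree says nothing about one's own out-degree. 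With an arbitrary (or high-degree) root, some $u\in U_i^{+}$ could have out-degree $1$ in $D_i$ while $|U_i^{+}|\ge u$, violating (T7).

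The correct choice is the opposite of the one you suggest, and it is precisely the step of the paper's proof your plan is missing: take $x_i$ to be a vertex of \emph{minimum} out-degree in $D_i$ (and $x_i'$ of minimum in-degree in $D_i'$) before applying Lemma~\ref{lem:dom}. Then (ID2) gives $|N_{D_i}^{+}(x_i)|\ge 16|U_i^{+}|$, and by minimality every $v\in V(D_i)$, in particular every $v\in U_i^{+}$, satisfies $|N_{D_i}^{+}(v)|\ge |N_{D_i}^{+}(x_i)|\ge 16|U_i^{+}|\ge d+|U_i^{+}|$ whenever $|U_i^{+}|\ge u\ge d/15$; the argument for (T8) is symmetric with $x_i'$ of minimum in-degree in $D_i'$. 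With this choice of roots the rest of your plan goes through essentially as written.
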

\begin{proof}
First of all, we construct $m$ distinct 5-indominators satisfying some properties.

\begin{CLAIM}\label{claim:indom*}
There exist a collection $\mathcal{A}$ of $m$ distinct 5-indominators $\left \{ (D_i , A_i , x_i , a_i ) \right \}_{i=1}^{m}$ satisfying the following. For every $1 \leq i \leq m$,
\begin{itemize} 
\item[$(1)$] $D_i := D - \bigcup_{j=1}^{i-1}{A_j}$.
\item[$(2)$] $x_i$ is a vertex in $D_i$ with the smallest number of out-neighbours in $V(D_i)$.
\item[$(3)$] $(D_i , A_i , x_i , a_i )$ is a 5-indominator.
\end{itemize}
\end{CLAIM}
\begin{proof}[Proof of Claim~\ref{claim:indom*}]
Since $|V(D)|=n \geq 5m$, the claim follows by successively applying Lemma~\ref{lem:dom}.
\end{proof}

Let us define $A := \bigcup_{i=1}^{m}A_i$. Now we construct $m$ distinct 5-outdominators satisfying some properties.

\begin{CLAIM}\label{claim:outdom*}
There exist a collection $\mathcal{B}$ of $m$ distinct 5-outdominators $\left \{ (D_i' , B_i , x_i' , b_i  ) \right \}_{i=1}^{m}$ satisfying the following. For every $1 \leq i \leq m$,
\begin{itemize} 
\item[$(1)$] $D_i' := D - (A \cup \bigcup_{j=1}^{i-1}{B_j})$.
\item[$(2)$] $x_i'$ is a vertex in $D_i'$ with the smallest number of in-neighbours in $V(D_i')$.
\item[$(3)$] $(D_i' , B_i , x_i' , b_i )$ is a 5-outdominator.
\end{itemize}
\end{CLAIM}
\begin{proof}[Proof of Claim~\ref{claim:outdom*}]
Since $|V(D) \setminus A| \geq n - 5m \geq 5m$, the claim follows by successively applying Lemma~\ref{lem:dom}.
\end{proof}

Let us define $B := \bigcup_{i=1}^{m}{B_i}$, and for every $i \in [m]$, let us define 
$$U_i^+ := \bigcap_{v \in A_i}N_{D_i}^+(v) \setminus \bigcup_{v \in A_i}N_{D_i}^-(v) \:\:, \: \:U_i^- := \bigcap_{v \in B_i}N_{D_i'}^-(v) \setminus \bigcup_{v \in B_i}N_{D_i'}^+(v).$$

By $(\rm ID2)$ and $(\rm OD2)$, for every $i \in [m]$ we have 
\begin{align}\label{eqn:deg}
|N_{D_i}^+(x_i)| \geq 16 |U_i^+|\:\: , \:\:|N_{D_i'}^-(x_i)| \geq 16 |U_i^-|
\end{align}

Since both $D_i$ and $D_i'$ contain $D-(A \cup B)$ as a subgraph for $1 \leq i \leq m$, this proves $(\rm T1)$ and $(\rm T2)$ of Definition~\ref{def:trio}. From the construction of $\mathcal{A}$ and $\mathcal{B}$, $(\rm T3)$ is clear.

By Lemma~\ref{lem:manydeg} and permuting indices, we may assume that for every $i \in [k]$, 
$$|N_{D[\left \{a_1 , \dots , a_m \right \}]}^- (a_i)| \geq \frac{m-k-\overline{\Delta}}{2} \:\:, \:\:|N_{D[\left \{b_1 , \dots , b_m \right \}]}^+ (b_i)| \geq \frac{m-k-\overline{\Delta}}{2}.$$ 
which proves $(\rm T4)$ of Definition~\ref{def:trio}.

For $1 \leq i \leq m$, let
\begin{align*}
F_i^+ := V(D_i) \setminus (A_i \cup U_i^+ \cup \bigcup_{v \in A_i}{N_{D_i}^- (v)}) \:\:, \:\: F_i^- := V(D_i') \setminus (B_i \cup U_i^- \cup \bigcup_{v \in B_i}{N_{D_i'}^+ (v)}),
\end{align*}
where $F_i^+$ is the set of vertices $v$ in $V(D_i) \setminus A_i$ that are not in-dominated by $A_i$ and are non-neighbours of some vertices in $A_i$, and $F_i^-$ is the set of vertices $v$ in $V(D_i') \setminus B_i$ that are not out-dominated by $B_i$ and are non-neighbours of some vertices in $B_i$.

Since every vertex in $D$ has at most $\overline{\Delta}$ other non-neighbour vertices and $|A_i|,|B_i| \leq 5$ for $i \in [m]$, it follows that 
\begin{align}
|F_i^+|, |F_i^-| \leq 5 \overline{\Delta}.\label{eqn:f_i}
\end{align}

It is easy to observe the following, from the definitions of $U_i^+$, $F_i^+$, $U_i^-$, and $F_i^-$.

\begin{OBS}\label{obs:trivial}
For every vertex $v \in V(D) \setminus (A \cup B)$ and $i \in [m]$, the following hold.
\begin{itemize} 
\item Either $v$ is in-dominated by $A_i$, or $v$ is in $U_i^+$, or $v$ is in $F_i^+$.
\item Either $v$ is out-dominated by $B_i$, or $v$ is in $U_i^-$, or $v$ is in $F_i^-$.
\end{itemize}
\end{OBS}

Let us define
\begin{align*}
I^+ &:= \left \{ i \in [m] \: \colon \: |U_i^{+}| < u \right \} , \: \: I^- := \left \{ i \in [m] \: \colon \: |U_i^{-}| < u \right \},\label{def:i}\\
O^+ &:= \left \{ v \in V(D) \:\colon\: \lvert \left \{ i \in I^+ \: : \: v \in U_i^{+} \right \} \rvert > t_1 \right \}, \\
F^+ &:= \left \{ v \in V(D) \:\colon\: \lvert \left \{ i \in [m] \: : \: v \in F_i^{+} \right \} \rvert > t_2 \right \}, \\
O^- &:= \left \{ v \in V(D) \:\colon\: \lvert \left \{ i \in I^- \: : \: v \in U_i^{-} \right \} \rvert > t_1 \right \},\\
F^- &:= \left \{ v \in V(D) \:\colon\: \lvert \left \{ i \in [m] \: : \: v \in F_i^{-} \right \} \rvert > t_2 \right \},\\
O &:= O^+ \cup O^-,\\
F &:= F^+ \cup F^-.
\end{align*}

Let $O^* := O \cup F$. By Observation~\ref{obs:trivial} and the definition of $O^*$, both $(\rm T5)$ and $(\rm T6)$ of Definition~\ref{def:trio} are satisfied.

\begin{CLAIM}\label{claim:sizebdd}
The following hold.
\begin{itemize} 
\item[$(\rm 1)$] For every $i \in [m] \setminus I^+$ and $v \in V(D_i) \setminus A_i$, $|N_{D_i}^{+}(v)| \geq d + |U_i^{+}|$.
\item[$(\rm 2)$] For every $i \in [m] \setminus I^-$ and $w \in V(D_i') \setminus B_i$, $|N_{D_i'}^{-}(w)| \geq d + |U_i^{-}|$.
\item[$(\rm 3)$] $|O| \leq \frac{2mu}{t_1}$.
\item[$(\rm 4)$] $|F| \leq \frac{10 \overline{\Delta} m}{t_2}$. Moreover, if $t_2 \geq \overline{\Delta}$, then $F = \emptyset$.
\end{itemize}
\end{CLAIM}
\begin{proof}[Proof of Claim~\ref{claim:sizebdd}]
For every $i \in [m]$, we have $|N_{D_i}^+(x_i)| \geq 16|U_i^+|$ and $|N_{D_i'}^-(x_i')| \geq 16|U_i^-|$ by~\eqref{eqn:deg}. From the definition of $x_i$ and $x_i'$, it follows that for every $v \in V(D_i) \setminus A_i$ and $w \in V(D_i') \setminus B_i$,
\begin{align*}
|N_D^+ (v)| & \geq |N_{D_i}^+(x_i)| \geq 16|U_i^+|\\
|N_{D-A}^- (w)| & \geq |N_{D_i'}^-(x_i')| \geq 16|U_i^-|.
\end{align*}
by Claims~\ref{claim:indom*} and~\ref{claim:outdom*}.

For every $i \in [m] \setminus I^+$ and $v \in V(D_i) \setminus A_i$, since $|U_i^{+}| \geq u$ it follows that $|N_{D_i}^{+}(v)| \geq 16|U_i^{+}| \geq d + |U_i^{+}|$ since $u \geq d/15$. Similarly, for every $i \in [m] \setminus I^-$ and $w \in V(D_i') \setminus B_i$, we have $|N_{D_i'}^{-}(w)| \geq d + |U_i^{-}|$. This proves (1) and (2).

Since every vertex in $O^+$ is in $U_i^{+}$ for more than $t_1$ indices $i \in I^+$,
$$ t_1 |O^+| \leq \sum_{i \in I^+}{|U_i^{+}|} \leq |I^+| \cdot u \leq m \cdot u$$
and $|O^+| \leq \frac{m \cdot u}{t_1}$. Similarly, $|O^-| \leq \frac{m \cdot u}{t_1}$, implying that $|O| \leq \frac{2mu}{t_1}$. This proves (3).

If $\overline{\Delta} = 0$, then $(\rm 4)$ is trivial. We may assume that $\overline{\Delta} > 0$. Since every vertex in $F^+$ is in $F_i^{+}$ for more than $t_2$ indices $i \in [m]$ and by~\eqref{eqn:f_i},
$$ t_2 |F^+| \leq \sum_{i \in [m]}{|F_i^{+}|} \leq m \cdot 5 \overline{\Delta}$$
and $|F^+| \leq \frac{5 \overline{\Delta} m}{t_2}$. Similarly, $|F^-| \leq \frac{5 \overline{\Delta} m}{t_2}$, implying that $|F| \leq \frac{10 \overline{\Delta} m}{t_2}$.

If $t_2 \geq \overline{\Delta}$, then for every $v \in F^+$, there are more than $\overline{\Delta}$ indices $i \in [m]$ such that $v \in F_i^+$ and there is $w \in A_i$ with $(v,w),(w,v) \notin E(D)$, implying that $v$ has more than $\overline{\Delta}$ non-neighbours. Hence $F^+ = \emptyset$. Similarly, we have $F^- = \emptyset$. This proves (4).
\end{proof}

Since $O^* = O \cup F$, $|O^*| \leq |O| + |F| \leq \frac{2mu}{t_1} + \frac{10 \overline{\Delta} m}{t_2}$ by Claim~\ref{claim:sizebdd}. If $t_2 \geq \overline{\Delta}$, then $F = \emptyset$ and thus $|O^*| \leq |O| \leq \frac{2mu}{t_1}$. Hence $(\mathcal{A},\mathcal{B},O^*)$ is a $(t_1 , t_2 , d , m , u)$-trio since $(\rm T7)$--$(\rm T9)$ hold by Claim~\ref{claim:sizebdd}.
\end{proof}

\subsection{Escapers}
In this subsection, we consider objects called escapers. Roughly speaking, given a directed multigraph $D$ and a small set $U \subseteq V(D)$, a $k$-escaper is a set of edges such that every vertex in $U$ can escape from $U$ to $V(D) \setminus U$ by a path, after we remove less than $k$ vertices of $D$. Finding $k$-escapers with few edges is one of the most crucial parts in constructing a sparse strongly $k$-connected subgraph of $D$. 

\begin{DEFN}
Let $k \geq 1$ be an integer and $D$ be a digraph. A \emph{$k$-escaper} in $D$ is a triple $(E_{\rm escape} , U , U_{\rm out})$ of a subset $E_{\rm escape}$ of $E(D)$ and subsets $U$ and $U_{\rm out}$ of $V(D)$ such that 
\begin{itemize} 
\item[$(\rm E1)$] $U_{\rm out} \subseteq V(D) \setminus U$, 
\item[$(\rm E2)$] For every $S \subseteq V(D)$ with $|S| \leq k-1$ and any vertex $u \in U \setminus S$, a subgraph $D-S$ contains a path from $u$ to a vertex in $U_{\rm out}$ only using edges in $E_{\rm escape}$, and
\item[$(\rm E3)$] For every $S \subseteq V(D)$ with $|S| \leq k-1$ and any vertex $v \in U \setminus S$, a subgraph $D-S$ contains a path from a vertex in $U_{\rm out}$ to $v$ only using edges in $E_{\rm escape}$.
\end{itemize}
\end{DEFN}

The following lemma is the main lemma of this subsection, which allows us to find a sparse $k$-escaper of a set $U$ of vertices.

\begin{LEM}\label{lem:escape}
Let $k,n \geq 1$ be integers. Let $D$ be a strongly $k$-connected digraph, and $U \subseteq V(D)$. If $|U| \leq |V(D)|-k$, then there is a $k$-escaper $(E_{\rm escape} , U , U_{\rm out})$ in $D$ such that $|E_{\rm escape}| \leq 4k|U|$ and $|U_{\rm out}| \leq 2k|U|$. 
\end{LEM}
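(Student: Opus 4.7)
The plan is to construct $E_{\rm escape}$ and $U_{\rm out}$ by assembling two symmetric pieces: an out-escape piece witnessing (E2) and an in-escape piece witnessing (E3). Since reversing every edge of $D$ exchanges the two problems, I will only describe the out-escape: I will build a subgraph $H^+\subseteq D$ with $|E(H^+)|\leq 2k|U|$ and a set $U_{\rm out}^+\subseteq V(D)\setminus U$ with $|U_{\rm out}^+|\leq k|U|$ such that, for every $S\subseteq V(D)$ with $|S|\leq k-1$ and every $u\in U\setminus S$, the digraph $H^+-S$ contains a path from $u$ to a vertex of $U_{\rm out}^+\setminus S$. Taking $E_{\rm escape}:=E(H^+)\cup E(H^-)$ and $U_{\rm out}:=U_{\rm out}^+\cup U_{\rm out}^-$ then yields the claimed bounds $|E_{\rm escape}|\leq 4k|U|$ and $|U_{\rm out}|\leq 2k|U|$, and (E1) is automatic.

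By Menger's theorem the out-escape condition (E2) is equivalent to requiring that, for every $u\in U$, the subgraph $H^+$ contains a $k$-fan from $u$ to $U_{\rm out}^+$. Since $|V(D)\setminus U|\geq k$ and $D$ is strongly $k$-connected, Proposition~\ref{prop:fan}(1) applied inside $D$ with terminal set $V(D)\setminus U$ produces, for each $u\in U$, a $k$-fan $\mathcal{F}_u$ from $u$ to $V(D)\setminus U$ in $D$. By making each of the $k$ paths in $\mathcal{F}_u$ minimal I may assume that every internal vertex of every path of $\mathcal{F}_u$ lies in $U\setminus\{u\}$, so $\mathcal{F}_u$ has at most $k$ endpoints in $V(D)\setminus U$ and at most $|U|+k-1$ edges. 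Letting $U_{\rm out}^+$ be the union of those endpoints over all $u\in U$ automatically gives the size bound $|U_{\rm out}^+|\leq k|U|$; the real issue is bounding the edges.

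The heart of the proof is to choose the fans $\{\mathcal{F}_u\}_{u\in U}$ compatibly so that the union $H^+:=\bigcup_u\mathcal{F}_u$ has at most $2k|U|$ edges, since the naive bound $|U|(|U|+k-1)$ is much too weak. The plan is to enforce sharing: process the vertices of $U$ in order of their distance to $V(D)\setminus U$ in $D$, and when building $\mathcal{F}_u$ splice in the suffix of any path already present in $H^+$ that starts at an out-neighbour of $u$. Once a vertex $w\in U$ has appeared on some previously chosen escape path, the tail of that path is already an escape route from $w$, so only a bounded number of genuinely new edges needs to be added for $\mathcal{F}_u$. An amortised charging---each edge of $H^+$ is charged to the unique $u\in U$ for which it is a ``first new edge'' of a path in $\mathcal{F}_u$---then gives the target bound, with the factor $2$ (rather than $1$) absorbing the boundary cost of the splicing.

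The main obstacle will be this charging step. A promising route is first to extract, inside the contracted digraph $D/(V(D)\setminus U)$, $k$ edge-disjoint in-branchings rooted at the contracted vertex: these exist by Theorem~\ref{thm:branching} because the strong $k$-connectivity of $D$ yields $e_D(S,V(D)\setminus S)\geq k$ for every nonempty $S\subseteq U$. Their lift to $D$ is a ``backbone'' of at most $k|U|$ edges that already provides $k$ edge-disjoint escape routes from every $u\in U$. The remaining task is then to upgrade edge-disjointness to internal vertex-disjointness, which is where the full strong $k$-connectivity of $D$ (rather than mere strong $k$-arc-connectivity) is used, and the second $k|U|$ edges are spent via local rerouting on vertices where two branchings collide. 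Verifying that this vertex-disjointness upgrade really costs only $O(k|U|)$ additional edges, rather than blowing up to $\Omega(|U|^2)$, is the delicate part of the argument.
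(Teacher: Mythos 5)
Your reduction to building, for each $u\in U$, a $k$-fan from $u$ to $V(D)\setminus U$ (plus the reversed version) matches the paper's starting point, but the only step carrying real quantitative content --- that the union of all these fans can be arranged to have $O(k|U|)$ edges while retaining, for \emph{every} $u$, robustness against deletion of up to $k-1$ vertices --- is exactly the step you leave open, and neither of your two suggested routes closes it. The splicing/charging scheme does not preserve what $(\rm E2)$ actually demands: if you replace portions of $\mathcal{F}_u$ by suffixes of previously constructed paths, the $k$ paths leaving $u$ need no longer be internally vertex-disjoint (several of them may funnel through, or merge at, the same earlier vertex), so a single deleted vertex can destroy many of them at once; and the remark that ``the tail of an earlier path is already an escape route from $w$'' supplies only one route from $w$, not $k$ internally disjoint ones, so it cannot be iterated against an adversarial set $S$ of $k-1$ deleted vertices. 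The in-branching route has the same defect in sharper form: $k$ edge-disjoint in-branchings obtained from Theorem~\ref{thm:branching} protect only against $k-1$ deleted \emph{edges}, and the assertion that edge-disjointness can be upgraded to the needed vertex form by ``local rerouting'' at a cost of only $O(k|U|)$ further edges is precisely what requires proof; nothing in your sketch shows such a rerouting exists, nor that its cost does not blow up to order $|U|^2$.

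The missing idea is to make the sparsity automatic rather than engineered. The paper first passes to a \emph{minimally} strongly $k$-connected spanning subgraph $D'$ of $D$ and takes all the fans (a $k$-fan from $u$ to $V(D)\setminus U$ and one from $V(D)\setminus U$ to $u$, for every $u\in U$) inside $D'$ via Proposition~\ref{prop:fan}. Since each path of such a fan meets $V(D)\setminus U$ only in its terminal vertex, every fan edge other than these boundary edges lies in $D'[U]$, and Proposition~\ref{prop:fewedges} --- which rests on Mader's theorem on anti-directed trails (Theorem~\ref{thm:mader_minimal}) together with Proposition~\ref{prop:forest} --- gives $|E(D'[U])|\le 2k|U|-k-1$ no matter how the fans from different vertices overlap. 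Adding at most $2k|U|$ boundary edges and collecting at most $2k|U|$ endpoints outside $U$ yields $|E_{\rm escape}|\le 4k|U|$ and $|U_{\rm out}|\le 2k|U|$, and the escaper properties follow because the $k$ paths of each fan pairwise share only $u$, so some path avoids any $S$ with $|S|\le k-1$. No ordering, splicing, or amortised charging is needed; without an ingredient of this kind (sparsity of every induced subgraph of a minimally strongly $k$-connected digraph), your outline does not yield the lemma.
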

\begin{proof}
Let $D'$ be a minimally strongly $k$-connected spanning subgraph of $D$. Since $|V(D)\setminus U| \geq k$, we can apply Proposition~\ref{prop:fan} as follows. For every $u \in U$, there are a $k$-fan $\{P^+_{u,i} \}_{i=1}^{k}$ from $u$ to $V(D) \setminus U$ and a $k$-fan $\{P^-_{u,i}\}_{i=1}^{k}$ from $V(D) \setminus U$ to $u$. 

Let us define
\begin{align}
E_{\rm escape} &:= \bigcup_{u \in U} \left ( \bigcup_{i=1}^{k} E(P^+_{u,i}) \cup \bigcup_{i=1}^{k} E(P^-_{u,i}) \right ),\\
U_{\rm out} &:= \bigcup_{u \in U} \left ( \bigcup_{i=1}^{k} V(P^+_{u,i}) \cup \bigcup_{i=1}^{k} V(P^-_{u,i}) \right ) \setminus U,
\end{align}
which proves $(\rm E1)$. 

For every $u \in U$, it follows that 
\begin{align}
\left | U_{\rm out} \cap \bigcup_{i=1}^{k}V(P^+_{u,i}) \right | = k, \: \: \left | U_{\rm out} \cap \bigcup_{i=1}^{k}V(P^-_{u,i}) \right | = k.
\end{align}
and thus $|U_{\rm out}| \leq 2k|U|$ and $|E_{\rm escape}| \leq |E(D'[U])| + |U_{\rm out}| \leq 4k|U|$ by Proposition~\ref{prop:fewedges}. 

Since $E_{\rm escape} \subseteq E(D')$, it is a subset of $E(D)$. Now we claim that $(E_{\rm escape}, U , U_{\rm out})$ is a $k$-escaper. For every $S \subseteq V(D)$ with $|S| \leq k-1$ and $u \in U \setminus S$, there is $i \in [k]$ with $V(P^+_{u,i}) \cap S = \emptyset$. Since $E(P^+_{u,i}) \subseteq E_{\rm escape}$ and by the definition of $U_{\rm out}$, this proves $(\rm E2)$. Similarly $(\rm E3)$ holds by the same proof.
\end{proof}

We also define an edge-version of escapers.

\begin{DEFN}
Let $k \geq 1$ be an integer and $D$ be a directed multigraph. A \emph{$k$-arc-escaper} in $D$ is a 3-tuple $(E_{\rm escape} , U , U_{\rm out})$ satisfying the following.
\begin{itemize} 
\item[$(\rm E1')$] $U_{\rm out} \subseteq V(D) \setminus U$.
\item[$(\rm E2')$] For every $F \subseteq E(D)$ with $|F| \leq k-1$ and any vertex $u \in U$, a subgraph $D-F$ contains a path from $u$ to a vertex in $U_{\rm out}$ only using edges in $E_{\rm escape}$.
\item[$(\rm E3')$] For every $F \subseteq E(D)$ with $|F| \leq k-1$ and any vertex $v \in U$, a subgraph $D-F$ contains a path from a vertex in $U_{\rm out}$ to $v$ only using edges in $E_{\rm escape}$.
\end{itemize}
\end{DEFN}

Replacing Proposition~\ref{prop:fewedges} by Corollary~\ref{cor:minimal_arc} in the proof of Lemma~\ref{lem:escape}, the following lemma easily follows.

\begin{LEM}\label{lem:escape_arc}
Let $k,n \geq 1$ be integers. Let $D$ be an $n$-vertex strongly $k$-arc-connected directed multigraph, and $U \subsetneq V(D)$. Then there is a $k$-arc-escaper $(E_{\rm escape} , U , U_{\rm out})$ in $D$ such that $|E_{\rm escape}| \leq 4k|U|$ and $|U_{\rm out}| \leq 2k|U|$.
\end{LEM}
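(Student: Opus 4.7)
The plan is to mirror the proof of Lemma~\ref{lem:escape} line by line, replacing vertex-disjoint fans by edge-disjoint fans and Proposition~\ref{prop:fewedges} by Corollary~\ref{cor:minimal_arc}. First I would let $D'$ be a minimally strongly $k$-arc-connected spanning subgraph of $D$, which exists by successively deleting any edge whose removal preserves strong $k$-arc-connectivity. Since $U \subsetneq V(D)$, the set $V(D) \setminus U$ is nonempty, so Proposition~\ref{prop:fan}$(\rm 2)$ applied in $D'$ with $S = V(D) \setminus U$ yields, for each $u \in U$, a $k$-arc-fan $\{P^{+}_{u,i}\}_{i=1}^{k}$ from $u$ to $V(D) \setminus U$ and a $k$-arc-fan $\{P^{-}_{u,i}\}_{i=1}^{k}$ from $V(D) \setminus U$ to $u$.

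Next, I would define $E_{\rm escape}$ and $U_{\rm out}$ exactly as in Lemma~\ref{lem:escape}: collect all edges appearing in any of these arc-fans into $E_{\rm escape}$, and collect all the endpoints of these arc-fans that lie outside $U$ into $U_{\rm out}$. Property $(\rm E1')$ is then immediate. For the size bound on $U_{\rm out}$, each $u \in U$ contributes at most $k$ vertices in $V(D) \setminus U$ from its out-fan and $k$ from its in-fan (since each path of a $k$-arc-fan meets $V(D) \setminus U$ in exactly one vertex, namely its endpoint outside $U$), giving $|U_{\rm out}| \leq 2k|U|$. For $E_{\rm escape}$, I split the edges into those inside $D'[U]$ and those crossing the cut between $U$ and $V(D) \setminus U$. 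Corollary~\ref{cor:minimal_arc} bounds the former by $2k(|U|-1)$, and the crossing edges number at most $2k|U|$ since each of the $2|U|$ arc-fans contributes exactly $k$ crossing edges (the last edge of each out-path and the first edge of each in-path). Summing gives $|E_{\rm escape}| \leq 2k(|U|-1) + 2k|U| \leq 4k|U|$.

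For $(\rm E2')$, fix $F \subseteq E(D)$ with $|F| \leq k-1$ and $u \in U$. Since $P^{+}_{u,1}, \dots, P^{+}_{u,k}$ are pairwise edge-disjoint, some index $i \in [k]$ satisfies $E(P^{+}_{u,i}) \cap F = \emptyset$, and this path lies in $D - F$, uses only edges in $E_{\rm escape}$, and ends at a vertex in $U_{\rm out}$. Property $(\rm E3')$ follows symmetrically using the in-fans.

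I expect essentially no obstacle beyond the translation: the only substantive change is swapping Proposition~\ref{prop:fewedges} for Corollary~\ref{cor:minimal_arc}, and using edge-disjointness of the arc-fans in place of internal-vertex-disjointness to handle the escape guarantee under edge-deletion. One small point worth noting is that the hypothesis $U \subsetneq V(D)$ replaces the hypothesis $|U| \leq |V(D)|-k$ of the vertex version, precisely because Proposition~\ref{prop:fan}$(\rm 2)$ only requires a nonempty target set, not one of size at least $k$.
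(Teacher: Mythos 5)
Your proposal is correct and matches the paper's own argument, which literally consists of repeating the proof of Lemma~\ref{lem:escape} with Proposition~\ref{prop:fewedges} replaced by Corollary~\ref{cor:minimal_arc} and vertex-fans replaced by arc-fans. Your additional remarks (edge-disjointness giving a fan-path avoiding $F$, and $U \subsetneq V(D)$ sufficing because Proposition~\ref{prop:fan}$(\rm 2)$ needs only a nonempty target set) are exactly the points that make the translation go through.
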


\subsection{Hubs}
In this subsection, we consider objects called \emph{hubs}, which allow us to connect a set of vertices with the vertices of dominators. Hubs are one of the main parts in constructing highly connected sparse spanning subgraphs of dense digraphs.

\begin{DEFN}\label{def:hub}
Let $k$ be an integer and $D$ be a digraph. A \emph{$k$-hub} $\mathcal{H}$ in $D$ is a 5-tuple $(E_{\rm hub}, A_0 , B_0 , U_o, U_i)$ that consists of a set $E_{\rm hub} \subseteq E(D)$, two sets $A_0 , B_0 \subseteq V(D)$ with $|A_0|=|B_0|=k$, and subsets $U_o , U_i \subseteq V(D)$ satisfying the following.

\begin{itemize} 
\item[$(\rm H1)$] $A_0 =: \left \{a_1 , \dots , a_k \right \}$, $B_0 =: \left \{b_1 , \dots , b_k \right \}$ and $A_0 \cap B_0 = \emptyset$.
\item[$(\rm H2)$] For every $t \in [k]$ and $S \subseteq V(D)$ with $|S| \leq k-1$, if $u \in U_o \setminus S$ and $a_t \notin S$, then $D-S$ contains a path from $u$ to $a_t$ only using edges in $E_{\rm hub}$.
\item[$(\rm H3)$] For every $t \in [k]$ and $S \subseteq V(D)$ with $|S| \leq k-1$, if $v \in U_i \setminus S$ and $b_t \notin S$, then $D-S$ contains a path from $b_t$ to $v$ only using edges in $E_{\rm hub}$.
\end{itemize}
\end{DEFN}

We also define an edge-version of hubs.

\begin{DEFN}\label{def:hub_arc}
Let $k$ be an integer and $D$ be a directed multigraph. A \emph{$k$-arc-hub} $\mathcal{H}$ in $D$ is a 5-tuple $(E_{\rm hub}, A_0 , B_0 , U_o, U_i)$ that consists of a set $E_{\rm hub} \subseteq E(D)$, two sets $A_0 , B_0 \subseteq V(D)$ with $|A_0|=|B_0|=k$, and subsets $U_o , U_i \subseteq V(D)$ satisfying the following.

\begin{itemize} 
\item[$(\rm H1')$] $A_0 =: \left \{a_1 , \dots , a_k \right \}$, $B_0 =: \left \{b_1 , \dots , b_k \right \}$ and $A_0 \cap B_0 = \emptyset$.
\item[$(\rm H2')$] For every $t \in [k]$, $u \in U_o$ and $F \subseteq E(D)$ with $|F| \leq k-1$, the subgraph $D-F$ contains a path from $u$ to $a_t$ only using edges in $E_{\rm hub}$.
\item[$(\rm H3')$] For every $t \in [k]$, $v \in U_i$ and $F \subseteq E(D)$ with $|F| \leq k-1$, the subgraph $D-F$ contains a path from $b_t$ to $v$ only using edges in $E_{\rm hub}$.
\end{itemize}
\end{DEFN}

The following lemma guarantees the existence of a $k$-hub under some conditions for dense digraphs.
 
\begin{LEM}\label{lem:hubexist}
Let $d,k,m,t_1 , t_2 \geq 1$, $\overline{\Delta},w \geq 0$ be integers with $d \geq 6m + 5\overline{\Delta}$ and a real number $u \geq \frac{d}{15}$. Let $D$ be a  digraph with $\overline{\Delta}(D) \leq \overline{\Delta}$ and at least $10m$ vertices. If $D$ contains a $(t_1 , t_2 , d, m, u)$-trio $(\mathcal{A},\mathcal{B},O^*)$ such that
\begin{itemize} 
\item $(\mathcal{A},\mathcal{B},O^*)$ satisfies the assumptions in Lemma~\ref{lem:trioexist},
\item $\mathcal{A}$ consists of 5-indominators $\left \{(D_i , A_i , x_i , a_i ) \right \}_{i=1}^{m}$, and
\item $\mathcal{B}$ consists of 5-outdominators $\left \{(D_i' , B_i , x_i' , b_i ) \right \}_{i=1}^{m}$.
\end{itemize}
then for every $W_o, W_i \subseteq V(D) \setminus (\bigcup_{i=1}^{m}{A_i} \cup \bigcup_{i=1}^{m}{B_i} \cup O^*)$ with $|W_o|,|W_i| \leq w$, then $D$ satisfies the following.

\begin{itemize} 
\item[$(\rm 1)$] If $m \geq t_1 + t_2 + k$, then there is $E_{\rm conn} \subseteq E(D)$ with $|E_{\rm conn}| \leq 6w(m-t_1-t_2)$ such that  for every $S \subseteq V(D)$ with $|S| \leq k-1$, if $u \in W_o \setminus S$, then there is $t \in [m]$ such that $D-S$ contains a path from $u$ to $a_t$ only using edges in $E_{\rm conn}$, and if $v \in W_i \setminus S$ then there is $t' \in [m]$ such that $D-S$ contains a path from $b_{t'}$ to $v$ only using edges in $E_{\rm conn}$. 

\item[$(\rm 2)$] If $m > 2t_1 + 2t_2 + 3k + \overline{\Delta} - 2$, then $D$ contains a $k$-hub 
$$\mathcal{H} := (E_{\rm hub} , \left \{a_1 , \dots , a_k \right \} , \left \{b_1 , \dots , b_k \right \} , W_o , W_i)$$ 
with $|E_{\rm hub}| \leq 2km + 6w(m-t_1-t_2)$. 
\end{itemize}
\end{LEM}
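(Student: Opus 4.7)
The plan is to prove part (1) first and then bootstrap to part (2) by adjoining one edge $(a_i, a_t)$ (and its $b$-side mirror) per previously constructed path. For each $u \in W_o$, fix a subset $J(u) \subseteq [m]$ of size exactly $m - t_1 - t_2$ among the good indices guaranteed by (T5), and construct one short path $P_{u,i}$ from $u$ to $a_i$ per $i \in J(u)$. In \emph{Case~1} ($u$ is in-dominated by $A_i$), take $a^{u,i} \in A_i \cap N_D^+(u)$ and set $P_{u,i} := u \to a^{u,i} \to a_i$, the last edge (if needed) living in the transitive tournament $D[A_i]$ whose sink is $a_i$. In \emph{Case~2} ($u \in U_i^+$ with $|U_i^+|$ at least the real-number parameter of the trio, so (T7) applies), $u$ has at least $d + |U_i^+|$ out-neighbours in $D_i$; since $u \in U_i^+$ forces these out-neighbours to miss $A_i$, Observation~\ref{obs:trivial} partitions them into in-dominatees of $A_i$, $U_i^+$, and $F_i^+$, so at least $d - 5\overline{\Delta} \geq 6m$ of them are in-dominated by $A_i$. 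Greedily select $v^{u,i}$ from this pool while avoiding $A$ and the previously chosen $v^{u,j}$'s; the forbidden set has size at most $5m + (m - t_1 - t_2 - 1) \leq 6m - 1$, so a valid $v^{u,i}$ exists, and then $P_{u,i} := u \to v^{u,i} \to a^{u,i} \to a_i$ for some $a^{u,i} \in A_i \cap N_D^+(v^{u,i})$. Perform the symmetric construction on $W_i$ using $U_i^-$ and $b_i$, and let $E_{\rm conn}$ be the union of the edges of these paths; since each path has at most $3$ edges, $|E_{\rm conn}| \leq 3(m - t_1 - t_2)(|W_o| + |W_i|) \leq 6w(m - t_1 - t_2)$.

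\textbf{Verifying (1).} Fix $S$ with $|S| \leq k - 1$ and $u \in W_o \setminus S$. The non-$u$ vertices of $P_{u,i}$ lie in $A_i \cup \{v^{u,i}\}$ (with $v^{u,i}$ absent in Case~1). By (T3) the $A_i$'s are pairwise disjoint, and by construction the $v^{u,i}$'s are pairwise distinct and lie outside $A$. Hence each $s \in S$ kills at most one $P_{u,i}$: if $s \in A$ it lies in a unique $A_i$, and if $s \notin A$ it equals $v^{u,j}$ for at most one $j$. Thus fewer than $k \leq m - t_1 - t_2 = |J(u)|$ paths are killed and at least one survives in $D - S$. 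The $W_i$ direction is symmetric.

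\textbf{Bootstrapping to (2).} Take $E_{\rm conn}$ from (1). By (T4) and $m > 2t_1 + 2t_2 + 3k + \overline{\Delta} - 2$, for each $t \in [k]$ the set $I_t := N_{D[\{a_1, \ldots, a_m\}]}^-(a_t)$ satisfies $|I_t| \geq (m - k - \overline{\Delta})/2 \geq t_1 + t_2 + k$, and symmetrically $|I_t'| \geq t_1 + t_2 + k$ on the $b$-side. Define $E_{\rm hub} := E_{\rm conn} \cup \{(a_i, a_t) : t \in [k],\, i \in I_t\} \cup \{(b_t, b_j) : t \in [k],\, j \in I_t'\}$, so $|E_{\rm hub}| \leq 6w(m - t_1 - t_2) + 2km$. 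For (H2), fix $t \in [k]$, $S$ with $|S| \leq k - 1$, $u \in W_o \setminus S$, and $a_t \notin S$. Inclusion-exclusion gives $|J(u) \cap I_t| \geq (m - t_1 - t_2) + (t_1 + t_2 + k) - m = k$, and for each $i \in J(u) \cap I_t$ the concatenation $P_{u,i} \cdot (a_i, a_t)$ is a $u$-to-$a_t$ path in $E_{\rm hub}$ whose non-endpoint vertices still lie in $A_i \cup \{v^{u,i}\}$. The same blocking count from (1) kills at most $k - 1$ of the $\geq k$ concatenations, leaving one intact; (H3) is symmetric.

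\textbf{Main obstacle.} The tightest step is the greedy choice of $v^{u,i}$ in Case~2, where a candidate pool of size $\geq 6m$ must beat a forbidden set of size $\leq 6m - 1$; this fits just barely, using the out-degree guarantee of (T7) together with the hypothesis $d \geq 6m + 5\overline{\Delta}$. The rest is inclusion-exclusion bookkeeping, with the threshold $m > 2t_1 + 2t_2 + 3k + \overline{\Delta} - 2$ used precisely to force $|J(u) \cap I_t| \geq k$ so that the $(k-1)$-blocking budget of $S$ is insufficient.
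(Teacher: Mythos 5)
Your proposal is correct and follows essentially the same route as the paper's proof: the same length-at-most-3 fan paths built from in-domination together with $(\rm T7)$/$(\rm T8)$ (choosing the intermediate vertex outside $A\cup U_i^+\cup F_i^+$ and distinct from earlier choices), the same $E_{\rm conn}$, the same augmentation by at most $2km$ edges among the $a_i$'s and $b_i$'s, and the same disjointness/pigeonhole count to survive the deletion of $|S|\leq k-1$ vertices (the paper phrases your step $|J(u)\cap I_t|\geq k$ as $|N_{D[\{a_1,\dots,a_m\}]}^-(a_t)|+|S^+(u)|-|S|>m$). The only nit is that at the extreme $m=2t_1+2t_2+3k+\overline{\Delta}-1$ your displayed inequality $(m-k-\overline{\Delta})/2\geq t_1+t_2+k$ is off by $1/2$, but integrality of $|I_t|$ repairs it, so the argument stands.
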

\begin{proof}
Since $D$ is a digraph with $\overline{\Delta}(D) \leq \overline{\Delta}$ and $|V(D)| \geq 10m$, there is $(\mathcal{A},\mathcal{B},O^*)$ such that 
\begin{align}\label{def:trio'}
\textrm{$(\mathcal{A},\mathcal{B},O^*)$ is a $\left (t_1 , \: t_2 ,\: d ,\: m ,\: u \right )$-trio in $D$,}
\end{align}
by Lemma~\ref{lem:trioexist}, where $\mathcal{A}$ consists of $m$ distinct 5-indominators $\left \{(D_i , A_i , x_i , a_i ) \right \}_{i=1}^{m}$, $\mathcal{B}$ consists of $m$ distinct 5-outdominators $\left \{(D_i' , B_i , x_i' , b_i ) \right \}_{i=1}^{m}$, $|O^*| < \frac{2mu}{t_1}$ if $t_2 \geq \overline{\Delta}$ and otherwise $|O^*| \leq \frac{2mu}{t_1} + \frac{10\overline{\Delta} m}{t_2}$.

Let $A := \bigcup_{i=1}^{m}A_i$ and $B := \bigcup_{i=1}^{m}{B_i}$. For $1 \leq i \leq m$, let 
$$U_i^+ := \bigcap_{v \in A_i}N_{D_i}^+(v) \setminus \bigcup_{v \in A_i}N_{D_i}^-(v) \:\:, \:\: U_i^- := \bigcap_{v \in B_i}N_{D_i'}^-(v) \setminus \bigcup_{v \in B_i}N_{D_i'}^+(v).$$

For each $i \in [m]$, let $F_i^+ \subseteq V(D_i) \setminus A_i$ be the set of vertices in $V(D_i) \setminus A_i$ that are not in-dominated by $A_i$ and not in $U_i^+$, and $F_i^- \subseteq V(D_i') \setminus B_i$ be the set of vertices in $V(D_i') \setminus B_i$ that are not out-dominated by $B_i$ and not in $U_i^-$. Since every vertex of $D$ has at most $\overline{\Delta}$ non-neighbours and each $|A_i|,|B_i| \leq 5$ for $i \in [m]$, we have
\begin{align}
|A|,|B| &\leq 5m.\label{eqn:AB}\\
|F_i^+|,|F_i^-| &\leq 5 \overline{\Delta}.\label{eqn:ni}
\end{align}

Let $W_o$ and $W_i$ be any subsets of $V(D) \setminus (A \cup B \cup O^*)$ with $|W_o|,|W_i| \leq w$. For each $u \in W_o$, let $I_0^+(u)$ be the set of indices $i \in [m]$ such that $A_i$ in-dominates $u$, and $I_1^+(u) \subseteq [m] \setminus I_0^+(u)$ be the set of indices $i \in [m] \setminus I_0^+(u)$ such that $u \in U_i^+$ and $|U_i^+| \geq u$. Let $S^+(u) := \left \{a_i \: : \: i \in I_0^+(u) \cup I_1^+(u) \right \}$. By $(\rm T5)$, we have $|S^+(u)| \geq m - t_1 - t_2$. By removing some elements in $I_0^+(u)$ and $I_1^+(u)$, we may assume that
\begin{align}\label{eqn:sizes^+}
|S^+(u)| = m - t_1 - t_2.
\end{align}

Now we construct a $|S^+(u)|$-fan $\{P^+_{u,i} \}_{i \in I_0^+(u) \cup I_1^+(u)}$ from $u$ to $S^+(u)$ as follows. For each $i \in I_0^+(u)$, since $A_i$ in-dominates $u$ we pick any vertex $u_i \in A_i \cap N_D^{+}(u)$. If $u_i \ne a_i$, then we can define $P^+_{u,i}$ to be the path $(u , u_i , a_i)$ since $D[A_i]$ contains a spanning transitive tournament by ($\rm ID1$) and~\eqref{def:trio'}. If $u_i = a_i$, then we define $P^+_{u,i}$ to be the path $(u , a_i)$. 

For each $i \in I_1^+(u)$, we have $d \geq 6m + 5\overline{\Delta}$ by the assumption of the lemma. By $(\rm T7)$,~\eqref{eqn:AB} and~\eqref{eqn:ni},
\begin{align*}
|N_{D_i}^{+}(u)| \geq d + |U_i^+| \geq 6m + 5\overline{\Delta} + |U_i^+| \geq m + |U_i^+| + |A| + |F_i^+|
\end{align*}
Thus we may choose $u_i \in N_{D_i}^+(u) \setminus (A \cup U_i^+ \cup F_i^+)$ for each $i \in I_1^+(u)$, so that $u_i \ne u_j$ for two distinct $i,j \in I_0^+(u) \cup I_1^+(u)$ as $|I_0^+(u) \cup I_1^+(u)| \leq m$. 

For each $i \in I_1^+(u)$, $u_i \in V(D_i) \setminus (A_i \cup U_i^+ \cup F_i^+)$ by $(\rm T1)$. This shows that $u_i$ is in-dominated by $A_i$ in $D_i$ and thus we can pick any $u_i' \in N_{D_i}^{+}(u_i) \cap A_i$. If $u_i' \ne a_i$, then we define $P^+_{u,i}$ to be the path $(u , u_i , u_i' , a_i)$, otherwise we define $P^+_{u,i}$ to be the path $(u , u_i , a_i)$. Since $u_i \notin A$, $\left \{P^+_{u,i} \right \}_{i \in I_0^+(u) \cup I_1^+(u)}$ is an $(m-t_1-t_2)$-fan from $u$ to $S^+(u)$. Note that each path in the $|S^+(u)|$-fan is of length at most 3. 

Similarly, for each $v \in W_i$, let $I_0^-(v)$ be the set of $i \in [m]$ such that $B_i$ out-dominates $v$, and $I_1^-(v) := [m] \setminus I_0^-(v)$ be the set of indices $i \in [m] \setminus I_0^-(v)$ such that $v \in U_i^-$ and $|U_i^-| \geq u$. Let $S^-(v) := \left \{b_i \: : \: i \in I_0^-(v) \cup I_1^-(v) \right \}$. By $(\rm T6)$, we have $|S^-(v)| \geq m-t_1 - t_2$. By removing some elements in $I_0^-(v)$ and $I_1^-(v)$, we may assume that
\begin{align}\label{eqn:sizes^-}
|S^-(v)| = m - t_1 - t_2.
\end{align}

Now we construct a $|S^-(v)|$-fan $\{P^-_{v,i} \}_{i \in I_0^-(v) \cup I_1^-(v)}$ from $S^-(v)$ to $v$. For each $i \in I_0^-(v)$, since $B_i$ out-dominates $u$ we pick any vertex $v_i \in B_i \cap N_D^{-}(u)$. If $v_i \ne b_i$, then we can define $P^-_{v,i}$ to be the path $(b_i , v_i , u)$ since $D[B_i]$ contains a spanning transitive tournament by ($\rm OD1$) and~\eqref{def:trio'}. If $v_i = b_i$, then we define $P^-_{v,i}$ to be the path $(b_i , v)$. 

For each $i \in I_1^-(v)$, we have $d \geq 6m + 5\overline{\Delta}$ by the assumption of the lemma. By $(\rm T8)$,~\eqref{eqn:AB} and~\eqref{eqn:ni},
\begin{align*}
|N_{D_i'}^{-}(v)| \geq d + |U_i^-| \geq 6m + 5\overline{\Delta} + |U_i^-| \geq m + |U_i^-| + |B| + |F_i^-|
\end{align*}
Thus we may choose $v_i \in N_{D_i'}^-(u) \setminus (B \cup U_i^- \cup F_i^-)$ for each $i \in I_1^-(v)$, so that $v_i \ne v_j$ for two distinct $i,j \in I_0^-(v) \cup I_1^-(v)$ as $|I_0^-(v) \cup I_1^-(v)| \leq m$. 

For each $i \in I_1^-(v)$, $v_i \in V(D_i') \setminus (B_i \cup U_i^- \cup F_i^-)$ by $(\rm T2)$. This shows that $v_i$ is out-dominated by $B_i$ in $D_i'$ and thus we can pick any $v_i' \in N_{D_i'}^{-}(v_i) \cap B_i$. If $v_i' \ne b_i$, then we define $P^-_{v,i}$ to be the path $(b_i , v_i' , v_i , v)$, otherwise we define $P^-_{v,i}$ to be the path $(b_i , v_i , v)$. Since $v_i \notin A \cup B$, $\left \{P^-_{v,i} \right \}_{i \in I_0^-(v) \cup I_1^-(v)}$ is an $(m-t_1-t_2)$-fan from $S^-(v)$ to $v$. Note that each path in the $|S^-(v)|$-fan is of length at most 3.

Now we prove $(\rm 1)$. For $m \geq t_1 + t_2 + k$, let us define 
\begin{align*}
E_{\rm conn} := \bigcup_{u \in W_o} \bigcup_{i \in S^+(u)}{E(P^+_{u,i})} \cup \bigcup_{v \in W_i} \bigcup_{i \in S^-(v)}{E(P^-_{v,i})}.
\end{align*}

By $|W_o|,|W_i| \leq w$,~\eqref{eqn:sizes^+}, and~\eqref{eqn:sizes^-}, we have 
\begin{align}\label{eqn:e_conn}
|E_{\rm conn}| \leq 6w(m-t_1-t_2). 
\end{align}

For every $S \subseteq V(D)$ with $|S| \leq k-1$, since for $u \in W_o$, $|S^+(u)| \geq m-t_1 -t_2 \geq k$ and for $v \in W_i$, $|S^-(v)| \geq m-t_1 - t_2 \geq k$, there are $t \in I_0^+(u) \cup I_1^+(u)$ with $V(P^+_{u,t}) \cap S = \emptyset$. Similarly, there is $t' \in I_0^-(v) \cup I_1^-(v)$ with $V(P^-_{v,t'}) \cap S = \emptyset$. This proves $(\rm 1)$.

Now we prove $(\rm 2)$. Let us assume that $m \geq 2t_1 + 2t_2 + 3k + \overline{\Delta} - 2$. Note that $m \geq t_1 + t_2 + k$ and thus $(\rm 1)$ is satisfied. Let us define
\begin{align*}
E_{\rm hub} := E_D (\left \{a_1 , \dots , a_k \right \} , \left \{a_1 , \dots , a_{m} \right \}) \cup E_D (\left \{b_1 , \dots , b_k \right \} , \left \{b_1 , \dots , b_{m} \right \}) \cup E_{\rm conn}.
\end{align*}

By~\eqref{eqn:e_conn}, we have
\begin{align*}
|E_{\rm hub}| \leq 2km + |E_{\rm conn}| \leq 2km + 6w(m-t_1-t_2).
\end{align*}

We prove that $(E_{\rm hub} , \left \{a_1 , \dots , a_k \right \} , \left \{b_1 , \dots , b_k \right \} , W_o , W_i)$ satisfies $(\rm H2)$. Let $S \subseteq V(D)$ be a set of at most $k-1$ vertices. For $t \in [k]$ with $a_t \notin S$ and $u \in W_o \setminus S$, it follows that $a_t$ has at least $\frac{m-k-\overline{\Delta}}{2}$ in-neighbours in $D[\left \{a_1 , \dots , a_m \right \}]$ by ($\rm T4$) and~\eqref{def:trio'}. There is a $|S^+(u)|$-fan from $u$ to $S^+(u) \subseteq A_0$ and $|S^+(u)| = m - t_1 - t_2$ by~\eqref{eqn:sizes^+}, it follows that there are at least $m - t_1 - t_2 - k + 1$ $i$'s with $i \in I_0^+(u) \cup I_1^+(u)$ and $V(P^+_{u,i}) \cap S = \emptyset$. Since $m > 2t_1 + 2t_2 + 3k + \overline{\Delta} - 2$ by the assumption of the lemma, we have 
\begin{align*}
|N_{D[\left \{a_1 , \dots , a_m \right \}]}^- (a_t)| + |S^+(u)| - |S| \geq \frac{m-k-\overline{\Delta}}{2} + (m-t_1-t_2) - (k-1) > m
\end{align*}
and by pigeonhole principle, there is $i \in I_0^+(u) \cup I_1^+(u)$ with $V(P^+_{u,i}) \cap S = \emptyset$ and $a_i \in  N_D^-(a_t)$. Then $P := P^+_{u,i} \cup (a_i , a_t)$ is a path from $u$ to $a_t$ that does not intersect with $S$. Note that $E(P) \subseteq E_{\rm hub}$, as $P^+_{u,i} \subseteq E_{\rm hub}$ and $a_i a_t \in E_{\rm hub}$. The proof of $(\rm H3)$ is similar.
\end{proof}

The following lemma guarantees a $k$-arc-hub for dense digraphs under some conditions. Since the proof is almost identical to the proof of Lemma~\ref{lem:hubexist} except for a few parts, we only sketch the proof. The proof differs from the proof of Lemma~\ref{lem:hubexist} for two parts: for every $i \in I_1^+(u)$, we choose each $u_i \in N_{D_i}^+(u) \setminus (U_i^+ \cup F_i^+)$ which may be in $A$, since the paths in $|S^+(u)|$-fan are not necessarily vertex-disjoint. Similarly, for $i \in I_1^-(v)$, we choose $v_i \in N_{D_i'}^-(u) \setminus (U_i^- \cup F_i^-)$ which may be in $B$, since the paths in $|S^-(v)|$-fan are not necessarily vertex-disjoint. Therefore, we only need $d \geq m + 5 \overline{\Delta}$ in the assumption. As the rest of the proof is identical, and we omit the  proof. 

\begin{LEM}\label{lem:hubexist_arc}
Let $d,k,m,t_1 , t_2 \geq 1$, $\overline{\Delta},w \geq 0$ be integers with $d \geq m + 5\overline{\Delta}$ and a real number $u \geq \frac{d}{15}$. Let $D$ be a directed multigraph with $\overline{\Delta}(D) \leq \overline{\Delta}$ and at least $10m$ vertices. If $D$ contains a $(t_1 , t_2 , d, m, u)$-trio $(\mathcal{A},\mathcal{B},O^*)$ such that
\begin{itemize} 
\item $(\mathcal{A},\mathcal{B},O^*)$ satisfies the assumptions in Lemma~\ref{lem:trioexist},
\item $\mathcal{A}$ consists of 5-indominators $\left \{(D_i , A_i , x_i , a_i ) \right \}_{i=1}^{m}$, and
\item $\mathcal{B}$ consists of 5-outdominators $\left \{(D_i' , B_i , x_i' , b_i ) \right \}_{i=1}^{m}$.
\end{itemize}
then for any $W_o, W_i \subseteq V(D) \setminus (\bigcup_{i=1}^{m}{A_i} \cup \bigcup_{i=1}^{m}{B_i} \cup O^*)$ with $|W_o|,|W_i| \leq w$, then $D$ satisfies the following.

\begin{itemize} 
\item[$(\rm 1)$] If $m \geq t_1 + t_2 + k$, then there is $E_{\rm conn} \subseteq E(D)$ with $|E_{\rm conn}| \leq 6w(m-t_1-t_2)$ such that  for every $F \subseteq E(D)$ with $|F| \leq k-1$, if $u \in W_o$ then there is $t \in [m]$ such that $D-F$ contains a path from $u$ to $a_t$ only using edges in $E_{\rm conn}$, and if $v \in W_i$ then there is $t' \in [m]$ such that $D-F$ contains a path from $b_{t'}$ to $v$ only using edges in $E_{\rm conn}$. 

\item[$(\rm 2)$] If $m > 2t_1 + 2t_2 + 3k + \overline{\Delta} - 2$, then $D$ contains a $k$-arc-hub 
$$\mathcal{H} := (E_{\rm hub} , \left \{a_1 , \dots , a_k \right \} , \left \{b_1 , \dots , b_k \right \} , W_o , W_i)$$ 
with $|E_{\rm hub}| \leq 2km + 6w(m-t_1-t_2)$. 
\end{itemize}
\end{LEM}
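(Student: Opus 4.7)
The plan is to mirror the proof of Lemma~\ref{lem:hubexist} step by step, but work with arc-fans instead of vertex-disjoint fans so that the construction becomes cheaper in the required out-/in-degree $d$. First, I would invoke Lemma~\ref{lem:trioexist} to obtain the $(t_1,t_2,d,m,u)$-trio $(\mathcal{A},\mathcal{B},O^*)$ in $D$, together with the associated sets $A=\bigcup A_i$, $B=\bigcup B_i$, $U_i^{\pm}$, and $F_i^{\pm}$. For every $u\in W_o$ I would define the index sets $I_0^+(u)$ (those $i$ for which $A_i$ in-dominates $u$) and $I_1^+(u)$ (those $i$ with $u\in U_i^+$ and $|U_i^+|\ge u$), trim them so that $|S^+(u)|=m-t_1-t_2$, and construct an $|S^+(u)|$-arc-fan $\{P^+_{u,i}\}$ from $u$ to $S^+(u)=\{a_i: i\in I_0^+(u)\cup I_1^+(u)\}$, with each $P^+_{u,i}$ of length at most three. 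The symmetric construction yields arc-fans $\{P^-_{v,i}\}$ for $v\in W_i$.

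The key weakening that gives $d\ge m+5\overline{\Delta}$ instead of $d\ge 6m+5\overline{\Delta}$ is exactly the point the author highlights: because the paths in the fan need only be edge-disjoint, for $i\in I_1^+(u)$ the intermediate vertex $u_i$ is allowed to lie in $A$. I would therefore pick $u_i\in N_{D_i}^+(u)\setminus(U_i^+\cup F_i^+)$, which exists since $|N_{D_i}^+(u)|\ge d+|U_i^+|\ge m+5\overline{\Delta}+|U_i^+|\ge |U_i^+|+|F_i^+|+m$ by (T7) and $|F_i^+|\le 5\overline\Delta$, and then the $u_i$ for distinct $i$ can be chosen distinct (we need $m$ choices, all avoiding the fixed forbidden set and previously-chosen $u_j$'s). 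From $u_i$ we step into $A_i$ along an edge to reach $a_i$, using (ID1) as in Lemma~\ref{lem:hubexist}. The symmetric picks for $v_i\in N_{D_i'}^-(v)\setminus(U_i^-\cup F_i^-)$ handle the in-fans via (T8). Edge-disjointness of the paths $P^+_{u,i}$ over $i$ follows because their first edge $uu_i$ uses distinct $u_i$ and, beyond the second vertex, each path lies entirely inside a different $A_i$ (or ends there), and $A_1,\dots,A_m$ are disjoint by (T3).

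Now I set
\[
E_{\rm conn}:=\bigcup_{u\in W_o}\bigcup_{i\in I_0^+(u)\cup I_1^+(u)}E(P^+_{u,i})\;\cup\;\bigcup_{v\in W_i}\bigcup_{i\in I_0^-(v)\cup I_1^-(v)}E(P^-_{v,i}),
\]
so $|E_{\rm conn}|\le 6w(m-t_1-t_2)$ since each path has at most three edges. For (1) with $m\ge t_1+t_2+k$ and $F\subseteq E(D)$ with $|F|\le k-1$: any arc-fan of size $m-t_1-t_2\ge k$ from $u$ to $S^+(u)$ has some path avoiding $F$ (edge-disjointness means at most $k-1$ of the paths can touch $F$), which produces the required $t\in[m]$; the in-fan side is symmetric.

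For (2), assuming $m>2t_1+2t_2+3k+\overline\Delta-2$, I add the two star-like edge sets on $\{a_i\}$ and $\{b_i\}$:
\[
E_{\rm hub}:=E_D(\{a_1,\dots,a_k\},\{a_1,\dots,a_m\})\;\cup\;E_D(\{b_1,\dots,b_k\},\{b_1,\dots,b_m\})\;\cup\;E_{\rm conn},
\]
giving $|E_{\rm hub}|\le 2km+6w(m-t_1-t_2)$. To verify (H2$'$), given $t\in[k]$, $u\in W_o$, and $F\subseteq E(D)$ with $|F|\le k-1$, I would use (T4): $a_t$ has at least $(m-k-\overline\Delta)/2$ in-neighbours among $\{a_1,\dots,a_m\}$. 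Of the $m-t_1-t_2$ paths in the arc-fan from $u$, at most $k-1$ are blocked by $F$, and at most another $k-1$ of the indices $i$ have the edge $a_ia_t$ blocked by $F$; the arithmetic threshold $(m-k-\overline\Delta)/2+(m-t_1-t_2)-2(k-1)>m$ holds exactly when $m>2t_1+2t_2+3k+\overline\Delta-2$, so by pigeonhole some $i$ survives with both $P^+_{u,i}$ and $a_ia_t$ unharmed, yielding a path $P^+_{u,i}\cup(a_i,a_t)$ in $D-F$ through $E_{\rm hub}$. The verification of (H3$'$) is symmetric. The one delicate bookkeeping step I expect to be the main obstacle is the pigeonhole estimate in the last paragraph: compared to the vertex case, I use $|F|\le k-1$ in two places (blocking a fan-path versus blocking the final edge $a_ia_t$), so the threshold on $m$ must be traced carefully to ensure it matches the stated bound $m>2t_1+2t_2+3k+\overline\Delta-2$.
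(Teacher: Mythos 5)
Your overall route is the paper's own: the paper proves this lemma by rerunning the proof of Lemma~\ref{lem:hubexist} verbatim, with the single change that for $i \in I_1^+(u)$ one chooses $u_i \in N_{D_i}^+(u)\setminus(U_i^+\cup F_i^+)$, so $u_i$ may lie in $A$ (and symmetrically $v_i$ may lie in $B$), which is exactly why $d \ge m+5\overline{\Delta}$ suffices. Your fan construction (distinct $u_i$, hence pairwise edge-disjoint paths of length at most $3$), the bound $|E_{\rm conn}|\le 6w(m-t_1-t_2)$, and the proof of part $(\rm 1)$ via ``at most $k-1$ of the $m-t_1-t_2\ge k$ edge-disjoint paths meet $F$'' all match the intended argument.

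However, the verification of $(\rm H2')$ contains a genuine arithmetic error at precisely the step you flagged. The inequality $\frac{m-k-\overline{\Delta}}{2}+(m-t_1-t_2)-2(k-1)>m$ is equivalent to $m>2t_1+2t_2+5k+\overline{\Delta}-4$, not to $m>2t_1+2t_2+3k+\overline{\Delta}-2$; for $k\ge 2$ the lemma's hypothesis does not imply it, so your pigeonhole (discarding $k-1$ indices for fan paths hit by $F$ and another $k-1$ for blocked final edges $a_ia_t$) does not close under the stated assumption on $m$. The double subtraction is also unnecessary: for the indices $i$ in the fan's index set with $a_i\in N_D^-(a_t)$ (note $i\ne t$, since $a_t\notin N_D^-(a_t)$), the concatenated walks $Q_i:=P^+_{u,i}\cup(a_i,a_t)$ are pairwise edge-disjoint --- the $P^+_{u,i}$ are edge-disjoint by construction, the edges $a_ia_t$ are pairwise distinct, and an edge $a_ia_t$ could only coincide with an edge of some $P^+_{u,j}$ if $j=t$ (its head lies in $A_j$), which is excluded. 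The hypothesis gives $\frac{m-k-\overline{\Delta}}{2}+(m-t_1-t_2)-m\ge k-\frac{1}{2}$, hence at least $k$ such indices $i$; since $|F|\le k-1$ meets at most $k-1$ of these edge-disjoint walks, some $Q_i$ survives in $D-F$, recovering the paper's single-subtraction count and the stated threshold on $m$. The argument for $(\rm H3')$ is symmetric. (One cosmetic point inherited from the paper: the edges needed for the final hop go from $\{a_1,\dots,a_m\}$ into $\{a_1,\dots,a_k\}$, so that is the direction in which $E_{\rm hub}$ should collect them.)
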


\subsection{Absorbers}
In this subsection, we consider objects called absorbers. Roughly speaking, even though we remove few vertices from a digraph, we can connect vertices to a small set of vertices by a path in an absorber. This plays an important role in preserving the vertex-connectivity in a spanning subgraph, and finding sparse absorbers are directly related to finding highly connected sparse spanning subgraphs.

\begin{DEFN}\label{def:abs}
Let $k\geq 1$ be an integer and $D$ be a digraph. A \emph{$k$-absorber} is a 5-tuple $(E_{\rm abs} , V_{\rm ex} , \mathcal{P} , W_i , W_o)$ that consists of a set $E_{\rm abs} \subseteq E(D)$, a set $V_{\rm ex} \subseteq V(D)$, a collection $\mathcal{P} = \left \{P_i \right \}_{i=1}^{k}$ of $k$ vertex-disjoint paths, and sets $W_i,W_o \subseteq V(D)$ satisfying the following.

\begin{itemize} 
\item[$(\rm A1)$] For every $t \in [k]$, both endvertices of $P_t$ are in $V_{\rm ex}$.
\item[$(\rm A2)$] $\bigcup_{t=1}^{k}E(P_t) \subseteq E_{\rm abs}$.
\item[$(\rm A3)$] For every $S \subseteq V(D)$ with $|S| \leq k-1$ and $u \in V(D) \setminus S$, the subgraph $D-S$ has a path from $u$ to a vertex in $W_o \setminus S$ only using edges in $E_{\rm abs}$.
\item[$(\rm A4)$] For every $S \subseteq V(D)$ with $|S| \leq k-1$ and $v \in V(D) \setminus S$, the subgraph $D-S$ has a path from a vertex in $W_i \setminus S$ to $v$ only using edges in $E_{\rm abs}$.
\end{itemize}
\end{DEFN}

We also define an edge-version of absorbers.

\begin{DEFN}\label{def:abs_arc}
Let $k\geq 1$ be an integer and $D$ be a directed multigraph. A \emph{$k$-arc-absorber} is a 5-tuple $(E_{\rm abs} , V_{\rm ex} , \mathcal{P} , W_i , W_o)$ that consists of a set $E_{\rm abs} \subseteq E(D)$, a set $V_{\rm ex} \subseteq V(D)$, a collection $\mathcal{P} = \left \{P_i \right \}_{i=1}^{k}$ of $k$ edge-disjoint paths, and sets $W_i,W_o \subseteq V(D)$ satisfying the following.

\begin{itemize} 
\item[$(\rm A1')$] For each $t \in [t]$, both endvertices of $P_t$ are in $V_{\rm ex}$.
\item[$(\rm A2')$] $\bigcup_{t=1}^{k}E(P_t) \subseteq E_{\rm abs}$.
\item[$(\rm A3')$] For every $F \subseteq E(D)$ with $|F| \leq k-1$ and $u \in V(D)$, the subgraph $D-F$ has a path from $u$ to a vertex in $W_o$ using only edges in $E_{\rm abs}$.
\item[$(\rm A4')$] For every $F \subseteq E(D)$ with $|F| \leq k-1$ and $v \in V(D)$, the subgraph $D-F$ has a path from a vertex in $W_i$ to $v$ using only edges in $E_{\rm abs}$.
\end{itemize}
\end{DEFN}

The following lemma guarantees the existence of a $k$-absorber that uses only few edges in dense digraphs.

\begin{LEM}\label{lem:abs}
Let $k,n \geq 1$ and $\overline{\Delta} \geq 0$ be integers, and $D$ be a strongly $k$-connected $n$-vertex digraph with $\overline{\Delta}(D) \leq \overline{\Delta}$. Let $V_{\rm ex} \subseteq V(D)$ with $|V(D) \setminus V_{\rm ex}| \geq 39k + 38\overline{\Delta}$, and $\mathcal{P}$ be a collection of $k$ vertex-disjoint paths $\left \{P_1 , \dots , P_k \right \}$ such that $P_i$ is a minimal path with endvertices in $V_{\rm ex}$ for every $i \in [k]$. 

Then $D$ has a $k$-absorber $\mathcal{D} = (E_{\rm abs} , V_{\rm ex} , \mathcal{P} , W_i , W_o)$ satisfying the following.

\begin{itemize} 
\item[$(\rm 1)$] $W_i,W_o \subseteq V(D) \setminus V_{\rm ex}$ and $|W_i|,|W_o| = 3k$.
\item[$(\rm 2)$] $|E_{\rm abs}| \leq kn + 226k(k+\overline{\Delta}) + 38(k+\overline{\Delta}) + (5k+1)|V_{\rm ex}|$. 
\end{itemize}
\end{LEM}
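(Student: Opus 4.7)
Set $V^*:=V(D)\setminus V_{\rm ex}$, which has at least $39k+38\overline{\Delta}$ vertices. I will form $E_{\rm abs}$ as the union of five ingredients: the prescribed path edges $\bigcup_i E(P_i)$; a sparse linkage on the non-path vertices of $V^*$ via Lemma~\ref{lem:digraph1'}; a sparse linkage on the internal path vertices via Lemma~\ref{lem:digraph2'}; a $k$-escaper via Lemma~\ref{lem:escape} lifting $V_{\rm ex}$ and the trio's exceptional sets into $V^*$; and a $k$-hub via Lemma~\ref{lem:hubexist}(2) linking the gateway sets of the two linkages to $k$ sinks $\{a_1,\dots,a_k\}$ and sources $\{b_1,\dots,b_k\}$ of a trio in $V^*$. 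The absorber targets will be $W_o=\{a_1,\dots,a_k\}\cup W_o^+$ and $W_i=\{b_1,\dots,b_k\}\cup W_i^+$, padded to size $3k$ by arbitrary $2k$-subsets $W_o^+,W_i^+$ of $V^*$.

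\textbf{Construction.} First apply Lemma~\ref{lem:trioexist} to $D[V^*]$ with $t_1=k$, $t_2=\overline{\Delta}$, and $m$ just above $\max\{3k,\;2t_1+2t_2+3k+\overline{\Delta}-2\}$, so that $t_2\ge\overline{\Delta}$ forces $|O^*|=O(k+\overline{\Delta})$ and Lemma~\ref{lem:hubexist}(2) applies; the bound $|V^*|\ge 39k+38\overline{\Delta}$ is tuned so that $|V^*|\ge 10m$. Write $A=\bigcup_iA_i$, $B=\bigcup_iB_i$, $V_{\rm main}=V^*\setminus(A\cup B\cup O^*)$, $V_{\rm path}^*=V_{\rm main}\cap\bigcup_i{\rm Int}(P_i)$, and $V_{\rm rest}=V_{\rm main}\setminus V_{\rm path}^*$. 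Apply Lemma~\ref{lem:digraph1'} with $U=V_{\rm rest}$ to obtain $D_1'$ and gateway sets $U_o^{(1)},U_i^{(1)}\subseteq V_{\rm rest}$; apply Lemma~\ref{lem:digraph2'} with $U=V_{\rm path}^*$ and the paths $\mathcal P$ to obtain $D_2'$ and gateway sets $U_o^{(2)},U_i^{(2)}\subseteq V_{\rm path}^*$, with the added guarantee that every $u\in V_{\rm path}^*\setminus S$ reaches $(U_o^{(2)}\cup\{y_1,\dots,y_k\})\setminus S$, where $y_t$ is the target endpoint of $P_t$. Apply Lemma~\ref{lem:escape} with $U=V_{\rm ex}\cup A\cup B\cup O^*$ to obtain $E_{\rm escape}$ with $|E_{\rm escape}|\le 4k|V_{\rm ex}|+O(k(k+\overline{\Delta}))$ and $U_{\rm out}\subseteq V_{\rm main}$. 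Finally apply Lemma~\ref{lem:hubexist}(2) with $W_o^{\rm hub}=U_o^{(1)}\cup U_o^{(2)}\cup\{y_1,\dots,y_k\}$ and $W_i^{\rm hub}=U_i^{(1)}\cup U_i^{(2)}\cup\{x_1,\dots,x_k\}$ (where $x_t$ is the source endpoint of $P_t$); this is valid because the path endpoints lie in $V_{\rm ex}$, disjoint from $A\cup B\cup O^*\subseteq V^*$. Include also $\bigcup_iE(D[A_i])\cup\bigcup_iE(D[B_i])$ (an extra $O(k+\overline{\Delta})$ edges).

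\textbf{Verification and count.} Let $E_{\rm abs}$ be the union of all edge sets above. Conditions (A1) and (A2) are built in. For (A3), take $S\subseteq V(D)$ with $|S|\le k-1$ and $u\in V(D)\setminus S$: if $u\in V_{\rm rest}$, use $D_1'$ to reach $U_o^{(1)}\setminus S\subseteq W_o^{\rm hub}$, then the hub to some $a_t\in\{a_1,\dots,a_k\}\setminus S\subseteq W_o\setminus S$; if $u\in V_{\rm path}^*$, use $D_2'$ to reach $(U_o^{(2)}\cup\{y_1,\dots,y_k\})\setminus S\subseteq W_o^{\rm hub}$ and then the hub; if $u\in V_{\rm ex}\cup A\cup B\cup O^*$, use $E_{\rm escape}$ to land in $U_{\rm out}\subseteq V_{\rm main}$ and reduce to one of the previous cases. (A4) is symmetric. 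For the count, $k|V_{\rm rest}|+(k-1)|V_{\rm path}^*|\le k|V^*|=kn-k|V_{\rm ex}|$; using that minimal paths have length at most $\overline{\Delta}+1$, $\sum_i|E(P_i)|\le k(\overline{\Delta}+1)$; the escaper contributes $\le 4k|V_{\rm ex}|+O(k(k+\overline{\Delta}))$ and the hub plus transitive tournaments contribute $O(k(k+\overline{\Delta}))$; after bookkeeping, the total stays within the claimed $kn+226k(k+\overline{\Delta})+38(k+\overline{\Delta})+(5k+1)|V_{\rm ex}|$.

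\textbf{Main obstacle.} The subtlest point is that the linkage $D_2'$ on $V_{\rm path}^*$ can terminate at a path endpoint $y_t\in V_{\rm ex}$ rather than at a gateway in $V^*$, which naively forces re-entry into $V_{\rm ex}$ and an unbounded escaper/linkage alternation. The fix is to fold $\{y_1,\dots,y_k\}$ (and dually $\{x_1,\dots,x_k\}$) directly into $W_o^{\rm hub}$ (resp.\ $W_i^{\rm hub}$), so the hub handles these endpoints in one step. A secondary challenge is choosing the trio parameters $t_1,t_2,m$ so that Lemma~\ref{lem:hubexist}(2) applies while $10m\le|V^*|$ remains compatible with the density hypothesis $|V^*|\ge 39k+38\overline{\Delta}$.
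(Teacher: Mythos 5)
Your overall architecture (trio in $V^*=V(D)\setminus V_{\rm ex}$, escaper for the exceptional vertices, two sparse linkages via Lemmas~\ref{lem:digraph1'} and~\ref{lem:digraph2'}, connection to the dominator sinks/sources) is the same as the paper's, but the step you yourself flag as the main obstacle is resolved incorrectly. You apply Lemma~\ref{lem:hubexist} to the trio living in $D[V^*]$ and put the path endpoints $x_t,y_t\in V_{\rm ex}$ into $W_i^{\rm hub},W_o^{\rm hub}$. That lemma requires $W_o,W_i\subseteq V(D')\setminus(\bigcup A_i\cup\bigcup B_i\cup O^*)$ for the digraph $D'$ that hosts the trio (here $D'=D[V^*]$), and its proof builds the fans out of each $w\in W_o$ from the trio properties $(\rm T5)$--$(\rm T8)$, i.e.\ from the fact that $w$ is in-dominated by most $A_i$ or lies in a large $U_i^+$; none of this is guaranteed, or even meaningful, for vertices of $V_{\rm ex}$, which are not vertices of $D[V^*]$ at all. ``Disjoint from $A\cup B\cup O^*$'' is not sufficient. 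Since this was exactly your device for stopping the escaper/linkage alternation, the circularity you identified remains: from an endpoint $y_t\in V_{\rm ex}$ the escaper only returns you to $U_{\rm out}$, which in your construction may again lie in $V_{\rm path}^*$, whose linkage may again only guarantee a terminal at an endpoint. The paper breaks the loop structurally: it enlarges $V_{\rm ex}$ to $V_{\rm ex}'=V_{\rm ex}\cup A'\cup B'\cup S^*$, takes an escaper $(E_{\rm escape},V_{\rm ex}',V_{\rm out})$, removes $V_{\rm out}$ from the path set $X_1'$, and gives $V_{\rm out}$ its own Lemma~\ref{lem:digraph1'}-linkage whose gateways feed directly into $E_{\rm conn}$, so a route re-enters $V_{\rm ex}'$ at most once.

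There are also quantitative gaps. Using part $(\rm 2)$ of Lemma~\ref{lem:hubexist} is both unnecessary and infeasible here: with $t_1=k$, $t_2=\overline{\Delta}$ it forces $m\geq 5k+3\overline{\Delta}-1$, while Lemma~\ref{lem:trioexist} (and the hypothesis of Lemma~\ref{lem:hubexist}) needs $|V^*|\geq 10m\geq 50k+30\overline{\Delta}-10$, which exceeds the available $39k+38\overline{\Delta}$ whenever $8\overline{\Delta}<11k-10$, in particular in the semicomplete case $\overline{\Delta}=0$ (where moreover $t_2=\overline{\Delta}=0$ violates $t_2\geq 1$). Inside the absorber only part $(\rm 1)$ is needed: the paper takes $m=3k$, $t_1=t_2=k$ (so $m\geq t_1+t_2+k$ and $10m=30k\leq 39k+38\overline{\Delta}$) and simply sets $W_o=\{a_1',\dots,a_{3k}'\}$, $W_i=\{b_1',\dots,b_{3k}'\}$; the full $k$-hub is invoked only at the top level, on a trio in the whole digraph where $n$ is large enough. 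Finally, your edge count relies on the claim that a minimal path has length at most $\overline{\Delta}+1$; this is false, since minimality of $P=(v_1,\dots,v_s)$ forbids only forward edges $v_iv_j$ with $j>i+1$, not backward edges, so minimal paths can have length $\Theta(n)$ even in tournaments. The path edges must instead be charged one-per-internal-vertex against the sets partitioning $V(D)$, as in the paper's bound $|E_{\rm path}|\leq|V_{\rm ex}'|+|V_{\rm out}|+|X_1'|+k$, so your final bookkeeping does not go through as written.
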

\begin{proof}
For $t \in [k]$, let us define $E_\textrm{path} := \bigcup_{t=1}^{k} E(P_t)$ and $D' := D - V_{\rm ex}$.

Since $|V(D')| \geq 39k + 38\overline{\Delta} \geq 10 \cdot 3k$, by applying Lemma~\ref{lem:trioexist} to $D'$ we deduce that
\begin{align}\label{def:trio'''}
\textrm{there is a $\left (k,\:k,\:18k + 5\overline{\Delta},\:3k,\:\frac{18k + 5\overline{\Delta}}{15} \right )$-trio $(\mathcal{A}',\mathcal{B}',S^*)$ in $D'$,}
\end{align}
where $\mathcal{A}'$ consists of $3k$ distinct 5-indominators $\left \{(D_i , A_i' , y_i , a_i' ) \right \}_{i=1}^{3k}$, $\mathcal{B}$ consists of $3k$ distinct 5-outdominators $\left \{(D_i' , B_i' , y_i' , b_i' ) \right \}_{i=1}^{3k}$, and $|S^*| \leq 8k + 32\overline{\Delta}$. 

Let us define 
$$A' := \bigcup_{i=1}^{3k}A_i', \:\:B' := \bigcup_{i=1}^{3k}B_i', \:\:V_{\rm ex}' := V_{\rm ex} \cup A' \cup B' \cup S^*,$$
and $V_i^+ := \bigcap_{v \in A_i'}N_{D_i}^+(v) \setminus \bigcup_{v \in A_i'}N_{D_i}^-(v)$, $V_i^- := \bigcap_{v \in B_i'}N_{D_i'}^-(v) \setminus \bigcup_{v \in B_i'}N_{D_i'}^+(v)$ for every $i \in [3k]$.

Since $|A'|,|B'| \leq 5 \cdot 3k$, it follows that
\begin{align}
|A' \cup B'| &\leq 30k,\label{eqn:a'b'}\\
|A' \cup B' \cup S^*| &\leq 38(k+\overline{\Delta}),\label{eqn:a'b's}\\
|V_{\rm ex}'| &\leq |V_{\rm ex}| + 38(k+\overline{\Delta}). \label{eqn:v_ex'}
\end{align}

Since 
\begin{align*}
|V(D) \setminus V_{\rm ex}'| &\geq |V(D) \setminus V_{\rm ex}| - |A' \cup B' \cup S^*| \\
& \geq  39k + 38\overline{\Delta} - 38(k+\overline{\Delta}) \geq k, 
\end{align*}
by applying  Lemma~\ref{lem:escape} to a set $V_{\rm ex}'$, there is a $k$-escaper $(E_{\rm escape}, V_{\rm ex}' , V_{\rm out})$ with $V_{\rm out} \subseteq V(D) \setminus V_{\rm ex}'$ such that
\begin{align}
|V_{\rm out}| &\leq  2k|V_{\rm ex}'| \leq 2k|V_{\rm ex}| + 76k(k+\overline{\Delta}) \label{eqn:v_out}\\
|E_{\rm escape}| &\leq 4k|V_{\rm ex}'| \leq 4k|V_{\rm ex}| + 152k(k+\overline{\Delta}) \label{eqn:e_escape}.
\end{align}

Let us define
\begin{align}
X_1' &:= \bigcup_{i=1}^{k}V^\textrm{int}(P_i) \setminus (V_{\rm ex}' \cup V_{\rm out})\label{def:x_1}\\
X_1 &:= V(D) \setminus (V_{\rm ex}' \cup V_{\rm out} \cup X_1')\label{def:x_1'}
\end{align}

\begin{CLAIM}\label{claim:conn2'}
There exist sets $U_i^0 , U_o^0 \subseteq V_{\rm out}$, a set $E_0 \subseteq E(D)$, sets $U_i^1,U_o^1 \subseteq X_1$, a set $E_1 \subseteq E(D)$, sets $U_i'^1 , U_o'^1 \subseteq X_1'$ and a set $E_1'\subseteq E(D)$ satisfying the following.

\begin{itemize} 
\item[$(\rm 1)$] $|E_0| \leq k |V_{\rm out}| - k + k \overline{\Delta}$.
\item[$(\rm 2)$] There are $U_i^0, U_o^0 \subseteq V_{\rm out}$ such that $|U_i^0|, |U_o^0| \leq 2k+\overline{\Delta}-1$ and for every $S \subseteq V(D)$ with $|S| \leq k-1$ and for every $u,v \in V_{\rm out} \setminus S$, the subgraph $D-S$ has a path from $u$ to a vertex in $U_o^0 \setminus S$, and a path from a vertex in $U_i^0 \setminus S$ to $v$ such that both paths only use edges in $E_0$.

\item[$(\rm 3)$] $|E_1| \leq k |X_1| - k + k \overline{\Delta}$.
\item[$(\rm 4)$] There are $U_i^1, U_o^1 \subseteq X_1$ such that $|U_i^1|, |U_o^1| \leq 2k+\overline{\Delta}-1$ and for every $S \subseteq V(D)$ with $|S| \leq k-1$ and for every $u,v \in X_1 \setminus S$, the subgraph $D-S$ has a path from $u$ to a vertex in $U_o^1 \setminus S$, and a path from a vertex in $U_i^1 \setminus S$ to $v$ such that both paths only use edges in $E_1$.

\item[$(\rm 5)$] $|E_1'| \leq (k-1)|X_1'| + (\overline{\Delta}+1)(k-1)$.
\item[$(\rm 6)$] There are $U_i'^1, U_o'^1 \subseteq X_1'$ such that $|U_i'^1|,|U_o'^1| \leq 2k+\overline{\Delta}-1$ and for every $S \subseteq V(D)$ with $|S| \leq k-1$ and for every $u,v \in X_1' \setminus S$, the subgraph $D-S$ has a path from $u$ to a vertex in $(U_o'^1 \cup V_\textrm{ex}) \setminus S$ , and a path from a vertex in $(U_i'^1 \cup V_\textrm{ex}) \setminus S$ to $v$ such that both paths only use edges in $E_\textrm{path} \cup E_1'$.
\end{itemize}
\end{CLAIM}
\begin{proof}[Proof of Claim~\ref{claim:conn2'}]
By applying Lemma~\ref{lem:digraph1'} to $D[V_{\rm out}]$ and $D[X_1]$, (1),(2),(3), and (4) follows. Similarly, applying Lemma~\ref{lem:digraph2'} to $D[X_1']$, (5) and (6) follows.
\end{proof}

Let us define 
\begin{align}
U_o := U_o^0 \cup U_o^1 \cup U_o'^1 , \:\:\: U_i := U_i^0 \cup U_i^1 \cup U_i'^1. \label{def:w_1}
\end{align}

Then $|U_i|,|U_o| \leq 3(2k+\overline{\Delta})$.

\begin{CLAIM}\label{claim:e_conn}
There is a set $E_{\rm conn} \subseteq E(D')$ of edges satisfying the following.
\begin{itemize} 
\item[$(1)$] $|E_{\rm conn}| \leq 18k(2k+\overline{\Delta})$.
\item[$(2)$] For every $S \subseteq V(D)$ with $|S| \leq k-1$ and $u \in U_o \setminus S$, there is $t \in [3k]$ such that $D' - S$ contains a path from $u$ to $a_t'$, only using edges in $E_{\rm conn}$.
\item[$(3)$] For every $S \subseteq V(D)$ with $|S| \leq k-1$ and $v \in U_i \setminus S$, there is $t \in [3k]$ such that $D' - S$ contains a path from $b_t'$ to $v$, only using edges in $E_{\rm conn}$.
\end{itemize}
\end{CLAIM}
\begin{proof}
Note that $U_o , U_i \subseteq V(D) \setminus V_{\rm ex}' \subseteq V(D')$. By~\eqref{def:trio'''}, $(\mathcal{A}',\mathcal{B}',S^*)$ satisfies the requirements of Lemma~\ref{lem:hubexist}, hence the claim follows by $(\rm 1)$ of Lemma~\ref{lem:hubexist}.
\end{proof}

Now let us define
\begin{align}
E_{\rm abs} &:= E_{\rm path} \cup E_{\rm escape} \cup E_0 \cup E_1 \cup E_1' \cup E_{\rm conn}, \label{def:e_abs} \\
W_o &:= \left \{a_1' , \dots , a_{3k}' \right \}\\
W_i &:= \left \{b_1' , \dots , b_{3k}' \right \}.
\end{align}

Then $W_o,W_i \subseteq V(D') = V(D) \setminus V_{\rm ex}$. Since $\bigcup_{t=1}^{k}{\rm Int}(P_t) \subseteq V_{\rm ex}' \cup V_{\rm out} \cup X_1'$ , we have $|E_{\rm path}| \leq |V_{\rm ex}'| + |V_{\rm out}| + |X_1'| + k$ by~\eqref{eqn:v_ex'}. 

Note that $V(D) = V_{\rm ex}' \cup V_{\rm out} \cup X_1 \cup X_1'$ by~\eqref{def:x_1'}. By~\eqref{eqn:v_ex'},~\eqref{eqn:v_out},~\eqref{eqn:e_escape}, Claim~\ref{claim:conn2'}, Claim~\ref{claim:e_conn}, and $V(D) = V_{\rm ex}' \cup V_{\rm out} \cup X_1 \cup X_1'$ we have
\begin{align}
|E_{\rm abs}| &\leq  |E_{\rm escape}| + |E_{\rm path}| + |E_0| + |E_1| + |E_1'| + |E_{\rm conn}|\nonumber \\
& \leq  4k|V_{\rm ex}'| + (|V_{\rm ex}'| + |V_{\rm out}| + |X_1'| + k) + (k|V_{\rm out}| - k + k \overline{\Delta}) + (k|X_1| - k + k \overline{\Delta})\nonumber\\
&\qquad  + ((k-1)|X_1'| + k-1 + k \overline{\Delta}) + 18k(2k+\overline{\Delta})\nonumber \\
& \leq  k(|V_{\rm ex}'| + |V_{\rm out}| + |X_1| + |X_1'|) + (3k+1)|V_{\rm ex}'| + |V_{\rm out}| + 3k\overline{\Delta} + 18k(2k+\overline{\Delta})\nonumber\\
& \leq  kn + (3k+1)|V_{\rm ex}| + 114k(k+\overline{\Delta}) + 38(k+\overline{\Delta}) + |V_{\rm out}| + 36k(k+\overline{\Delta}) \nonumber\\
& \leq  kn + 226k(k+\overline{\Delta}) + 38(k+\overline{\Delta}) + (5k+1)|V_{\rm ex}|. \label{eqn:e_abs*}
\end{align}

Let us define
$$\mathcal{D} := (E_{\rm abs} , V_{\rm ex} , \mathcal{P} , W_i , W_o).$$

\begin{CLAIM}\label{claim:abs_end}
$\mathcal{D}$ is a $k$-absorber in $D$.
\end{CLAIM}
\begin{proof}
Both $(\rm A1)$ and $(\rm A2)$ are clear. Let $S \subseteq V(D)$ with $|S| \leq k-1$, and $u,v \in V(D) \setminus S$ be two distinct vertices.

\begin{itemize} 
\item[$(\rm a)$] If $u \in V_{\rm ex}'$, then since $(E_{\rm escape} , V_{\rm ex}' , V_{\rm out})$ is a $k$-escaper, there is a path from $u$ to $u' \in V_{\rm out}$ in $D-S$ using only edges in $E_{\rm escape}$, and there is a path from $u'$ to a vertex $u'' \in U_o$ in $D-S$ only using edges in $E_0$ by Claim~\ref{claim:conn2'}. By Claim~\ref{claim:e_conn}, there is a path from $u''$ to a vertex $u^+ \in W_o$ in $D-S$ only using edges in $E_{\rm conn}$.

\item[$(\rm b)$] If $u \in X_1'$, then there is a path from $u$ to $u' \in U_o \cup V_{\rm ex}$ in $D-S$ only using edges in $E_{\rm path} \cup E_1'$ by Claim~\ref{claim:conn2'}. If $u' \in U_o$, then there is a path from $u'$ to a vertex $u^+ \in W_o$ in $D-S$ only using edges in $E_{\rm conn}$ by Claim~\ref{claim:e_conn}. Otherwise if $u' \in V_{\rm ex} \setminus S$, then there is a path from $u'$ to a vertex $u^+ \in W_o$ in $D-S$ only using edges in $E_{\rm abs}$ by $(\rm a)$.

\item[$(\rm c)$] If $u \in V_{\rm out} \cup X_1$, then there is a path from $u$ to a vertex $u' \in U_o$ in $D-S$ using only edges in $E_0 \cup E_1$ by Claim~\ref{claim:conn2'}. By Claim~\ref{claim:e_conn}, there is a path from $u'$ to a vertex $u^+ \in W_o$ in $D-S$ only using edges in $E_{\rm conn}$.
\end{itemize}

Hence there is a path in $D-S$ from $u$ to $u^+ \in W_o$ only using edges in $E_{\rm abs}$, proving $(\rm A3)$. Similarly, there is a path in $D-S$ from a vertex $v^+ \in W_i$ to $v$ only using edges in $E_{\rm abs}$, proving $(\rm A4)$. This proves the claim.
\end{proof}

By Claim~\ref{claim:abs_end} and~\eqref{eqn:e_abs*}, this completes the proof of the lemma.
\end{proof}

Similarly, the following lemma guarantees the existence of a $k$-arc-absorber that uses only few edges in dense digraphs.

\begin{LEM}\label{lem:abs_arc}
Let $k,n \geq 1$ and $\overline{\Delta} \geq 0$ be integers, and $D$ be a strongly $k$-connected $n$-vertex directed multigraph with $\overline{\Delta}(D) \leq \overline{\Delta}$. Let $V_{\rm ex} \subseteq V(D)$ with $|V(D) \setminus V_{\rm ex}| \geq 33k + 32\overline{\Delta}$, and $\mathcal{P}$ be a collection of $k$ edge-disjoint paths $\left \{P_1 , \dots , P_k \right \}$ such that $P_i$ is a path with endvertices in $V_{\rm ex}$ for every $i \in [k]$. 

Then $D$ has a $k$-arc-absorber $\mathcal{D} = (E_{\rm abs} , V_{\rm ex} , \mathcal{P} , W_i , W_o)$ satisfying the following.

\begin{itemize} 
\item[$(\rm 1)$] $W_i,W_o \subseteq V(D) \setminus V_{\rm ex}$ and $|W_i|,|W_o| = 3k$.
\item[$(\rm 2)$] $|E_{\rm abs}| \leq kn + 210k(k+\overline{\Delta}) + 32(k+\overline{\Delta}) + (5k+1)|V_{\rm ex}|$. 
\end{itemize}
\end{LEM}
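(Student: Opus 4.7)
The plan is to mirror the proof of Lemma~\ref{lem:abs} step by step, substituting the arc-analogues of each tool and being careful about the two structural differences: the paths $P_1,\dots,P_k$ are merely edge-disjoint (so they may share vertices), and the linkage hypothesis concerns edge-deletions rather than vertex-deletions. Concretely, I set $D':=D-V_{\rm ex}$ and first invoke Lemma~\ref{lem:trioexist} on $D'$ to obtain a $(k,k,d,3k,u)$-trio $(\mathcal{A}',\mathcal{B}',S^*)$ with $d=3k+5\overline\Delta$ and $u=d/15$; the sole purpose of using a smaller $d$ than in Lemma~\ref{lem:abs} is to exploit the weaker hypothesis $d\geq m+5\overline\Delta$ of Lemma~\ref{lem:hubexist_arc} (versus $d\geq 6m+5\overline\Delta$ in the vertex case), which lets us trim $|S^*|$. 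Writing $A',B'$ for the unions of the $A_i'$'s and $B_i'$'s respectively and $V_{\rm ex}':=V_{\rm ex}\cup A'\cup B'\cup S^*$, a direct estimate gives $|V_{\rm ex}'|\leq|V_{\rm ex}|+O(k+\overline\Delta)$, and the hypothesis $|V(D)\setminus V_{\rm ex}|\geq 33k+32\overline\Delta$ is enough to run all subsequent steps.

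Next I apply Lemma~\ref{lem:escape_arc} to the set $V_{\rm ex}'$ to obtain a $k$-arc-escaper $(E_{\rm escape},V_{\rm ex}',V_{\rm out})$ with $|V_{\rm out}|\leq 2k|V_{\rm ex}'|$ and $|E_{\rm escape}|\leq 4k|V_{\rm ex}'|$. I then split the remaining vertices as in the vertex case: $X_1':=\bigcup_{i=1}^{k}{\rm Int}(P_i)\setminus(V_{\rm ex}'\cup V_{\rm out})$ and $X_1:=V(D)\setminus(V_{\rm ex}'\cup V_{\rm out}\cup X_1')$. To produce the sparse linkage inside $V_{\rm out}$ and $X_1$, I apply Lemma~\ref{lem:digraph3'} twice; for the interior-of-paths part $X_1'$ I apply Lemma~\ref{lem:digraph4'}, which is the arc-version that handles edge-disjoint (but vertex-sharing) paths and gives the bounds $|E_1'|\leq(k-1)|X_1'|+(k-1)(\overline\Delta+2k-1)$ and $|U_i'^1|,|U_o'^1|\leq 4k+\overline\Delta-3$. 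Let $U_o$ and $U_i$ be the unions of the three relevant sets obtained.

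Now I use Lemma~\ref{lem:hubexist_arc}(1) applied inside $D'$ with $W_o:=U_o$, $W_i:=U_i$ (both of size $O(k+\overline\Delta)$) to produce an edge set $E_{\rm conn}\subseteq E(D')$ of size $O(k(k+\overline\Delta))$ such that, after removing any $\leq k-1$ edges of $D$, every vertex of $U_o$ can reach some $a_t'$ and every vertex of $U_i$ can be reached from some $b_{t'}'$, using only edges of $E_{\rm conn}$. Setting
\[
E_{\rm abs}:=E_{\rm path}\cup E_{\rm escape}\cup E_0\cup E_1\cup E_1'\cup E_{\rm conn},\quad W_o:=\{a_1',\dots,a_{3k}'\},\quad W_i:=\{b_1',\dots,b_{3k}'\},
\]
and $\mathcal{D}:=(E_{\rm abs},V_{\rm ex},\mathcal{P},W_i,W_o)$, the axioms $(\rm A1'),(\rm A2')$ are immediate. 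To verify $(\rm A3')$ (and by symmetry $(\rm A4')$) for $F\subseteq E(D)$ with $|F|\leq k-1$ and $u\in V(D)$, I perform the same three-case chase as in Lemma~\ref{lem:abs}: if $u\in V_{\rm ex}'$ the arc-escaper drops it into $V_{\rm out}$, then $E_0$ (via Lemma~\ref{lem:digraph3'}) brings it to $U_o^0$; if $u\in X_1'$ then $E_{\rm path}\cup E_1'$ brings it (via Lemma~\ref{lem:digraph4'}) to $U_o'^1\cup V_{\rm ex}$, whence a further excursion through $V_{\rm ex}'$ reaches $U_o$; if $u\in V_{\rm out}\cup X_1$ then $E_0\cup E_1$ brings it directly to $U_o$; finally $E_{\rm conn}$ carries $U_o$ into $W_o$.

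The bookkeeping is the main place where I have to be attentive: I expect the edge count to telescope to
\[
|E_{\rm abs}|\leq k(|V_{\rm ex}'|+|V_{\rm out}|+|X_1|+|X_1'|)+(3k+1)|V_{\rm ex}'|+|V_{\rm out}|+(k-1)(2k-1)+k\overline\Delta+O(k(k+\overline\Delta)),
\]
and the sum of set sizes equals $|V(D)|=n$, so the leading term is $kn$; inserting $|V_{\rm ex}'|\leq|V_{\rm ex}|+O(k+\overline\Delta)$ and $|V_{\rm out}|\leq 2k|V_{\rm ex}|+O(k(k+\overline\Delta))$ and collecting constants yields the desired $kn+210k(k+\overline\Delta)+32(k+\overline\Delta)+(5k+1)|V_{\rm ex}|$. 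The main obstacles are precisely (i) confirming that $33k+32\overline\Delta$ free vertices suffice to run both the trio construction and the arc-escaper, and (ii) tracking the slightly worse constants in Lemma~\ref{lem:digraph4'} compared to Lemma~\ref{lem:digraph2'} so as to verify that the coefficient $210$ (rather than $226$) is attained; everything else is a routine transcription of the vertex-connectivity argument.
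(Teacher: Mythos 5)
Your proposal is correct and mirrors the paper's own proof essentially step for step: the same trio on $D-V_{\rm ex}$ with the reduced parameter $d=3k+5\overline\Delta$ justified by the weaker hypothesis of Lemma~\ref{lem:hubexist_arc}, the $k$-arc-escaper from Lemma~\ref{lem:escape_arc} on $V_{\rm ex}'$, the decomposition into $V_{\rm out}$, $X_1$, $X_1'$ handled by Lemmas~\ref{lem:digraph3'} and~\ref{lem:digraph4'}, the connecting set from Lemma~\ref{lem:hubexist_arc}(1), and the same three-case verification of $(\rm A3')$ and $(\rm A4')$, with the bookkeeping collapsing to the stated bound exactly as in the paper.
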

\begin{proof}
For $t \in [k]$, let us define $E_\textrm{path} := \bigcup_{t=1}^{k} E(P_t)$ and $D' := D - V_{\rm ex}$.

Since $|V(D')| \geq 33k + 32\overline{\Delta} \geq 10 \cdot 3k$, by applying Lemma~\ref{lem:trioexist} to $D'$ we deduce that 
\begin{align}\label{def:trio''}
\textrm{there is a $\left (k,\:k,\:3k + 5\overline{\Delta},\:3k,\:\frac{3k + 5\overline{\Delta}}{15} \right )$-trio $(\mathcal{A}',\mathcal{B}',S^*)$ in $D'$,}
\end{align}
where $\mathcal{A}'$ consists of $3k$ distinct 5-indominators $\left \{(D_i , A_i' , y_i , a_i') \right \}_{i=1}^{3k}$, $\mathcal{B}$ consists of $3k$ distinct 5-outdominators $\left \{(D_i' , B_i' , y_i' , b_i') \right \}_{i=1}^{3k}$, and $|S^*| \leq 1.2k + 32\overline{\Delta}$. 

Let us define 
$$A' := \bigcup_{i=1}^{3k}A_i', \:\:B' := \bigcup_{i=1}^{3k}B_i', \:\:V_{\rm ex}' := V_{\rm ex} \cup A' \cup B' \cup S^*,$$
and $V_i^+ := \bigcap_{v \in A_i'}N_{D_i}^+(v) \setminus \bigcup_{v \in A_i'}N_{D_i}^-(v)$, $V_i^- := \bigcap_{v \in B_i'}N_{D_i'}^-(v) \setminus \bigcup_{v \in B_i'}N_{D_i'}^+(v)$ for every $i \in [3k]$.

Since $|A'|,|B'| \leq 5 \cdot 3k$, it follows that
\begin{align}
|A' \cup B'| &\leq 30k,\label{eqn:a'b'_arc}\\
|A' \cup B' \cup S^*| &\leq 32(k+\overline{\Delta}),\label{eqn:a'b's_arc}\\
|V_{\rm ex}'| &\leq |V_{\rm ex}| + 32(k+\overline{\Delta}).\label{eqn:v_ex'_arc}
\end{align}

The rest of the proof is almost identical to the proof of Lemma~\ref{lem:abs}, except for a few parts: we use Lemma~\ref{lem:escape_arc} for $k$-arc-escapers instead of Lemma~\ref{lem:escape} for $k$-escapers. Since the paths in $\left \{P_1 , \dots , P_k \right \}$ are edge-disjoint and each $P_i$ is not necessarily minimal, as we use Lemmas~\ref{lem:digraph3'} and~\ref{lem:digraph4'} instead of Lemmas~\ref{lem:digraph1'} and~\ref{lem:digraph2'} respectively. Note that Lemma~\ref{lem:digraph4'} has a slightly worse bound than Lemma~\ref{lem:digraph2'}. Finally, we replace Lemma~\ref{lem:hubexist} by Lemma~\ref{lem:hubexist_arc} in the proof of Claim~\ref{claim:e_conn}. 

Since 
\begin{align*}
|V(D) \setminus V_{\rm ex}'| &\geq |V(D) \setminus V_{\rm ex}| - |A' \cup B' \cup S^*| \\
& \geq  33k + 32\overline{\Delta} - 32(k+\overline{\Delta}) \geq k, 
\end{align*}
by applying  Lemma~\ref{lem:escape_arc} to a set $V_{\rm ex}'$, there is a $k$-arc-escaper $(E_{\rm escape}, V_{\rm ex}' , V_{\rm out})$ with $V_{\rm out} \subseteq V(D) \setminus V_{\rm ex}'$ such that
\begin{align}
|V_{\rm out}| &\leq  2k|V_{\rm ex}'| \leq 2k|V_{\rm ex}| + 64k(k+\overline{\Delta}) \label{eqn:v_out_arc}\\
|E_{\rm escape}| &\leq 4k|V_{\rm ex}'| \leq 4k|V_{\rm ex}| + 128k(k+\overline{\Delta}) \label{eqn:e_escape_arc}.
\end{align}

Let us define
\begin{align}
X_1' &:= \bigcup_{i=1}^{k}V^\textrm{int}(P_i) \setminus (V_{\rm ex}' \cup V_{\rm out})\label{def:x_1_arc}\\
X_1 &:= V(D) \setminus (V_{\rm ex}' \cup V_{\rm out} \cup X_1')\label{def:x_1'_arc}
\end{align}


\begin{CLAIM}\label{claim:conn2'_arc}
There exist sets $U_i^0 , U_o^0 \subseteq V_{\rm out}$, a set $E_0 \subseteq E(D)$, sets $U_i^1,U_o^1 \subseteq X_1$, a set $E_1 \subseteq E(D)$, sets $U_i'^1 , U_o'^1 \subseteq X_1'$ and a set $E_1'\subseteq E(D)$ satisfying the following.

\begin{itemize} 
\item[$(\rm 1)$] $|E_0| \leq k |V_{\rm out}| - k + k \overline{\Delta}$.
\item[$(\rm 2)$] There are $U_i^0, U_o^0 \subseteq V_{\rm out}$ such that $|U_i^0|, |U_o^0| \leq 2k+\overline{\Delta}-1$ and for every $F \subseteq E(D)$ with $|S| \leq k-1$ and for every $u,v \in V_{\rm out}$, the subgraph $D-F$ has a path from $u$ to a vertex in $U_o^0$, and a path from a vertex in $U_i^0$ to $v$ such that both paths only use edges in $E_0$.

\item[$(\rm 3)$] $|E_1| \leq k |X_1| - k + k \overline{\Delta}$.
\item[$(\rm 4)$] There are $U_i^1, U_o^1 \subseteq X_1$ such that $|U_i^1|, |U_o^1| \leq 2k+\overline{\Delta}-1$ and for every $F \subseteq E(D)$ with $|F| \leq k-1$ and for every $u,v \in X_1$, the subgraph $D-F$ has a path from $u$ to a vertex in $U_o^1$, and a path from a vertex in $U_i^1 $ to $v$ such that both paths only use edges in $E_1$.

\item[$(\rm 5)$] $|E_1'| \leq (k-1)|X_1'| + (k-1)(\overline{\Delta}+2k-1)$.
\item[$(\rm 6)$] There are $U_i'^1, U_o'^1 \subseteq X_1'$ such that $|U_i'^1|,|U_o'^1| \leq 4k+\overline{\Delta}-3$ and for every $F \subseteq E(D)$ with $|F| \leq k-1$ and for every $u,v \in X_1' \setminus S$, the subgraph $D-F$ has a path from $u$ to a vertex in $U_o'^1 \cup V_\textrm{ex}$ , and a path from a vertex in $U_i'^1 \cup V_\textrm{ex}$ to $v$ such that both paths only use edges in $E_\textrm{path} \cup E_1'$.
\end{itemize}
\end{CLAIM}
\begin{proof}[Proof of Claim~\ref{claim:conn2'_arc}]
By applying Lemma~\ref{lem:digraph3'} to $D[V_{\rm out}]$ and $D[X_1]$, (1),(2),(3), and (4) follows. Similarly, applying Lemma~\ref{lem:digraph4'} to $D[X_1']$, (5) and (6) follows.
\end{proof}

Let us define 
\begin{align}
U_o := U_o^0 \cup U_o^1 \cup U_o'^1 , \:\:\: U_i := U_i^0 \cup U_i^1 \cup U_i'^1. \label{def:w_1}
\end{align}

Then $|U_i|,|U_o| \leq 8k+3\overline{\Delta}$.

\begin{CLAIM}\label{claim:e_conn_arc}
There is a set $E_{\rm conn} \subseteq E(D')$ of edges satisfying the following.
\begin{itemize} 
\item[$(1)$] $|E_{\rm conn}| \leq 6k(8k+3\overline\Delta)$.
\item[$(2)$] For every $S \subseteq V(D)$ with $|S| \leq k-1$ and $u \in U_o \setminus S$, there is $t \in [3k]$ such that $D' - S$ contains a path from $u$ to $a_t'$, only using edges in $E_{\rm conn}$.
\item[$(3)$] For every $S \subseteq V(D)$ with $|S| \leq k-1$ and $v \in U_i \setminus S$, there is $t \in [3k]$ such that $D' - S$ contains a path from $b_t'$ to $v$, only using edges in $E_{\rm conn}$.
\end{itemize}
\end{CLAIM}
\begin{proof}
Note that $U_o , U_i \subseteq V(D) \setminus V_{\rm ex}' \subseteq V(D')$. By~\eqref{def:trio''}, $(\mathcal{A}',\mathcal{B}',S^*)$ satisfies the requirements of Lemma~\ref{lem:hubexist_arc}, hence the claim follows by $(\rm 1)$ of Lemma~\ref{lem:hubexist_arc}.
\end{proof}

Now let us define
\begin{align}
E_{\rm abs} &:= E_{\rm path} \cup E_{\rm escape} \cup E_0 \cup E_1 \cup E_1' \cup E_{\rm conn}, \label{def:e_abs} \\
W_o &:= \left \{a_1' , \dots , a_{3k}' \right \}\\
W_i &:= \left \{b_1' , \dots , b_{3k}' \right \}.
\end{align}

Then $W_o,W_i \subseteq V(D') = V(D) \setminus V_{\rm ex}$. Since $\bigcup_{t=1}^{k}{\rm Int}(P_t) \subseteq V_{\rm ex}' \cup V_{\rm out} \cup X_1'$ , we have $|E_{\rm path}| \leq |V_{\rm ex}'| + |V_{\rm out}| + |X_1'| + k$ by~\eqref{eqn:v_ex'_arc}. 

Note that $V(D) = V_{\rm ex}' \cup V_{\rm out} \cup X_1 \cup X_1'$ by~\eqref{def:x_1'_arc}. By~\eqref{eqn:v_ex'_arc},~\eqref{eqn:v_out_arc},~\eqref{eqn:e_escape_arc}, Claim~\ref{claim:conn2'_arc}, Claim~\ref{claim:e_conn_arc}, and $V(D) = V_{\rm ex}' \cup V_{\rm out} \cup X_1 \cup X_1'$ we have
\begin{align}
|E_{\rm abs}| &\leq  |E_{\rm escape}| + |E_{\rm path}| + |E_0| + |E_1| + |E_1'| + |E_{\rm conn}|\nonumber \\
& \leq  4k|V_{\rm ex}'| + (|V_{\rm ex}'| + |V_{\rm out}| + |X_1'| + k) + (k|V_{\rm out}| - k + k \overline{\Delta}) + (k|X_1| - k + k \overline{\Delta}) \nonumber\\
&\qquad + ((k-1)|X_1'| + (k-1) \overline{\Delta} + 2k^2 - 3k + 1) + 6k(8k+3\overline{\Delta})\nonumber \\
& \leq  k(|V_{\rm ex}'| + |V_{\rm out}| + |X_1| + |X_1'|) + (3k+1)|V_{\rm ex}'| + |V_{\rm out}| + 50k^2 + 21k\overline{\Delta}\nonumber\\
& \leq  kn + (3k+1)|V_{\rm ex}| + 96k(k+\overline{\Delta}) + 32(k+\overline{\Delta}) + |V_{\rm out}| + 50k^2 + 21k \overline\Delta \nonumber\\
& \leq  kn + (210k^2 + 181k \overline\Delta) + 32(k+\overline{\Delta}) + (5k+1)|V_{\rm ex}|. \label{eqn:e_abs*_arc}
\end{align}

Let us define
$$\mathcal{D} := (E_{\rm abs} , V_{\rm ex} , \mathcal{P} , W_i , W_o).$$

\begin{CLAIM}\label{claim:abs_end_arc}
$\mathcal{D}$ is a $k$-arc-absorber in $D$.
\end{CLAIM}
\begin{proof}
Both $(\rm A1')$ and $(\rm A2')$ are clear. Let $F \subseteq E(D)$ with $|F| \leq k-1$, and $u,v \in V(D)$ be two distinct vertices.

\begin{itemize} 
\item[$(\rm a)$] If $u \in V_{\rm ex}'$, then since $(E_{\rm escape} , V_{\rm ex}' , V_{\rm out})$ is a $k$-arc-escaper, there is a path from $u$ to $u' \in V_{\rm out}$ in $D-F$ using only edges in $E_{\rm escape}$, and there is a path from $u'$ to a vertex $u'' \in U_o$ in $D-F$ only using edges in $E_0$ by Claim~\ref{claim:conn2'_arc}. By Claim~\ref{claim:e_conn_arc}, there is a path from $u''$ to a vertex $u^+ \in W_o$ in $D-F$ only using edges in $E_{\rm conn}$.

\item[$(\rm b)$] If $u \in X_1'$, then there is a path from $u$ to $u' \in U_o \cup V_{\rm ex}$ in $D-F$ only using edges in $E_{\rm path} \cup E_1'$ by Claim~\ref{claim:conn2'_arc}. If $u' \in U_o$, then there is a path from $u'$ to a vertex $u^+ \in W_o$ in $D-F$ only using edges in $E_{\rm conn}$ by Claim~\ref{claim:e_conn_arc}. Otherwise if $u' \in V_{\rm ex}$, then there is a path from $u'$ to a vertex $u^+ \in W_o$ in $D-F$ only using edges in $E_{\rm abs}$ by $(\rm a)$.

\item[$(\rm c)$] If $u \in V_{\rm out} \cup X_1$, then there is a path from $u$ to a vertex $u' \in U_o$ in $D-F$ using only edges in $E_0 \cup E_1$ by Claim~\ref{claim:conn2'_arc}. By Claim~\ref{claim:e_conn_arc}, there is a path from $u'$ to a vertex $u^+ \in W_o$ in $D-F$ only using edges in $E_{\rm conn}$.
\end{itemize}

Hence there is a path in $D-F$ from $u$ to $u^+ \in W_o$ only using edges in $E_{\rm abs}$, proving $(\rm A3')$. Similarly, there is a path in $D-F$ from a vertex $v^+ \in W_i$ to $v$ only using edges in $E_{\rm abs}$, proving $(\rm A4')$.
\end{proof}

By Claim~\ref{claim:abs_end_arc} and~\eqref{eqn:e_abs*_arc}, this completes the proof of the lemma.
\end{proof}

\section{Proof of the main result}\label{sec:proof}
We divide Theorem~\ref{thm:main} into two parts as follows. First of all, the following theorem establishes the upper bound of the minimum number of edges in a strongly $k$-connected spanning subgraph.

\begin{THM}\label{thm:main_1}
For all integers $k,n \geq 1$ and $\overline{\Delta} \geq 0$, every strongly $k$-connected $n$-vertex digraph $D$ with $\overline{\Delta}(D) \leq \overline{\Delta}$ contains a strongly $k$-connected spanning subgraph with at most $kn + 790k\overline{\Delta} + 790k^2$ edges.
\end{THM}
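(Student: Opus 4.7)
The plan is to follow the high-level strategy outlined in Section~\ref{sec:proofsketch} and assemble the sparse subgraph $D_{\mathrm{sparse}}$ from three ingredients already developed: a backbone of $k$ vertex-disjoint minimal paths linking sinks of $5$-indominators to sources of $5$-outdominators, an absorber from Lemma~\ref{lem:abs} that routes every vertex into small sets $W_o,W_i$, and a hub from Lemma~\ref{lem:hubexist} that routes from $W_o,W_i$ into the endpoints of the backbone. When $n$ is bounded by a constant multiple of $k+\overline\Delta$ (below the size threshold needed to invoke Lemmas~\ref{lem:trioexist} and~\ref{lem:abs}) the bound $2k(n-k)$ from Mader already lies inside $kn+790k(k+\overline\Delta)$, so we may focus on the regime where $n$ exceeds a linear threshold in $k+\overline\Delta$.

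Concretely, I first apply Lemma~\ref{lem:trioexist} to $D$ with parameters $t_1,t_2,m,d,u$ chosen so that $t_2\ge\overline\Delta$ (killing the $\overline\Delta$ term of $|O^*|$ in (T9)), $m>2t_1+2t_2+3k+\overline\Delta-2$ (needed by Lemma~\ref{lem:hubexist}(2)), $d\ge 6m+5\overline\Delta$, $u\ge d/15$, and each of $t_1,t_2,m$ is $\Theta(k+\overline\Delta)$. This yields an outer trio $(\mathcal A,\mathcal B,O^*)$ with $5$-indominators $(D_i,A_i,x_i,a_i)$ and $5$-outdominators $(D_i',B_i,x_i',b_i)$ for $i\in[m]$, and $|O^*|=O(k+\overline\Delta)$. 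Set
\[
V_{\mathrm{ex}}:=O^*\cup\bigcup_{i=1}^{m}(A_i\cup B_i),\qquad |V_{\mathrm{ex}}|=O(k+\overline\Delta),
\]
so that $|V(D)\setminus V_{\mathrm{ex}}|\ge 39k+38\overline\Delta$, as required by Lemma~\ref{lem:abs}. Using Proposition~\ref{prop:fan}(3) I then pick $k$ vertex-disjoint paths from $\{a_1,\ldots,a_k\}$ to $\{b_1,\ldots,b_k\}$; since (T4) is stated set-wise on these two $k$-sets, the relabeling $\sigma\mapsto\mathrm{id}$ within $[k]$ preserves it, and I shorten each to a minimal $a_t$-$b_t$ path $P_t$ in $D$.

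I then apply Lemma~\ref{lem:abs} to $V_{\mathrm{ex}}$ and $\mathcal P=\{P_1,\ldots,P_k\}$ to obtain a $k$-absorber $(E_{\mathrm{abs}},V_{\mathrm{ex}},\mathcal P,W_i,W_o)$ with $|W_i|=|W_o|=3k$ and $W_o,W_i\subseteq V(D)\setminus V_{\mathrm{ex}}$; because $O^*\subseteq V_{\mathrm{ex}}$, these are automatically disjoint from the exceptional set of the outer trio, so Lemma~\ref{lem:hubexist}(2) applies and yields a $k$-hub
\[
\mathcal H=(E_{\mathrm{hub}},\{a_1,\ldots,a_k\},\{b_1,\ldots,b_k\},W_o,W_i).
\]
The spanning subgraph is $D_{\mathrm{sparse}}:=(V(D),E_{\mathrm{abs}}\cup E_{\mathrm{hub}})$; note that the backbone edges already sit inside $E_{\mathrm{abs}}$ by (A2). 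To check strong $k$-connectivity, fix $S\subseteq V(D)$ with $|S|\le k-1$ and $u,v\notin S$: some $P_t$ is vertex-disjoint from $S$ (there are $k$ of them), so chaining (A3)$\to$(H2)$\to P_t\to$(H3)$\to$(A4) produces a path $u\to w_o\to a_t\to b_t\to w_i\to v$ inside $D_{\mathrm{sparse}}-S$, as desired.

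The main obstacle will be the numerical balancing of constants to meet the stated $790$-coefficient bound. Lemma~\ref{lem:abs}(2) contributes $kn+226k(k+\overline\Delta)+38(k+\overline\Delta)+(5k+1)|V_{\mathrm{ex}}|$, in which the term $(5k+1)|V_{\mathrm{ex}}|$ is dominant and grows linearly with $m$ and with $|O^*|$; Lemma~\ref{lem:hubexist}(2) contributes $2km+6\cdot 3k\cdot(m-t_1-t_2)$. Both depend on the outer-trio parameters, and the bound $|O^*|\le 2mu/t_1$ forces $t_1$ to grow with $k+\overline\Delta$ in order to keep $|V_{\mathrm{ex}}|$ of the right order—so the parameters $(t_1,t_2,m,d,u)$ must be tuned jointly, not greedily. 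A secondary technical point is that the small-$n$ regime (in which Lemma~\ref{lem:trioexist} or Lemma~\ref{lem:abs} does not apply because their size preconditions fail) must be separated cleanly and handled by Mader's theorem, which, as noted, already gives a subgraph within the $kn+790k(k+\overline\Delta)$ budget in that regime.
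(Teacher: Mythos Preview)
Your proposal is correct and follows essentially the same route as the paper: the paper also handles the small-$n$ regime via Mader (with thresholds $n<4k+3$ and $n<200(k+\overline\Delta)$), then fixes the outer trio with the specific choice $(t_1,t_2,d,m,u)=(k+\overline\Delta,\ \overline\Delta,\ 30k+35\overline\Delta,\ 5(k+\overline\Delta),\ 7(k+\overline\Delta)/3)$, yielding $|A\cup B\cup O^*|\le 74(k+\overline\Delta)$, and then applies Lemma~\ref{lem:abs} and Lemma~\ref{lem:hubexist}(2) exactly as you describe, arriving at $|E_{\mathrm{abs}}|+|E_{\mathrm{hub}}|\le kn+678k(k+\overline\Delta)+112(k+\overline\Delta)\le kn+790k(k+\overline\Delta)$. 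Your identification of the parameter constraints and the connectivity-verification chain $(\mathrm{A3})\to(\mathrm{H2})\to P_t\to(\mathrm{H3})\to(\mathrm{A4})$ matches the paper's argument.
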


Secondly, the following theorem establishes the upper bound of the minimum number of edges in a strongly $k$-arc-connected spanning subgraph.

\begin{THM}\label{thm:main_2}
For all integers $k,n \geq 1$ and $\overline{\Delta} \geq 0$, every strongly $k$-arc-connected $n$-vertex directed multigraph $D$ with $\overline{\Delta}(D) \leq \overline{\Delta}$ contains a strongly $k$-arc-connected spanning subgraph with at most $kn + 666k\overline{\Delta} + 666k^2$ edges.
\end{THM}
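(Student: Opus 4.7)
The plan is to mirror the strategy that will be used for Theorem~\ref{thm:main_1}, using the arc-analogues of all the objects built in Section~\ref{sec:object}. First I would dispose of the small-$n$ case: if $n$ is below a threshold of order $O(k+\overline{\Delta})$ (concretely, anything forcing $2k(n-1) \leq kn + 666k\overline{\Delta} + 666k^2$), invoke Dalmazzo's theorem in $(\mathrm{2})$ of the introduction to get a strongly $k$-arc-connected spanning subgraph with at most $2k(n-1)$ edges, which already satisfies the target bound. So I may assume $n$ is large enough that Lemmas~\ref{lem:trioexist},~\ref{lem:hubexist_arc}, and~\ref{lem:abs_arc} all apply.

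Next, apply Lemma~\ref{lem:trioexist} to $D$ with parameters $t_1 = t_2 = k$, $d$ a small linear function of $k + \overline{\Delta}$ (at least $m + 5\overline{\Delta}$ so that Lemma~\ref{lem:hubexist_arc} is usable), $u = d/15$, and $m$ a little larger than $2t_1 + 2t_2 + 3k + \overline{\Delta} - 2$ so that part $(\mathrm{2})$ of Lemma~\ref{lem:hubexist_arc} is available. This produces a $(t_1,t_2,d,m,u)$-trio $(\mathcal{A},\mathcal{B},O^*)$ with dominators $(D_i,A_i,x_i,a_i)$ and $(D_i',B_i,x_i',b_i)$ for $i \in [m]$, satisfying $|O^*| = O(k + \overline{\Delta})$ by $(\mathrm{T9})$. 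By $(\mathrm{T4})$ together with Proposition~\ref{prop:fan}$(\mathrm{4})$, one finds $k$ edge-disjoint paths $P_1, \dots, P_k$ in $D$ such that, after relabeling indices in $[k]$, $P_i$ runs from $a_i$ to $b_i$. Set $V_{\rm ex} := \bigcup_{i=1}^{m} A_i \cup \bigcup_{i=1}^{m} B_i \cup O^*$, which has size $O(k + \overline{\Delta})$ since $|A_i|, |B_i| \leq 5$.

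Now apply Lemma~\ref{lem:abs_arc} with this $V_{\rm ex}$ and $\mathcal{P} := \{P_1, \dots, P_k\}$ (whose endvertices lie in $V_{\rm ex}$) to obtain an arc-absorber $(E_{\rm abs}, V_{\rm ex}, \mathcal{P}, W_i, W_o)$ with $W_o, W_i \subseteq V(D) \setminus V_{\rm ex}$ of size $3k$, and $|E_{\rm abs}| \leq kn + 210k(k+\overline{\Delta}) + 32(k+\overline{\Delta}) + (5k+1)|V_{\rm ex}|$. Then apply Lemma~\ref{lem:hubexist_arc}$(\mathrm{2})$ to the trio $(\mathcal{A},\mathcal{B},O^*)$ with $W_o, W_i$ as the target sets (legitimate since $W_o, W_i$ avoid $V_{\rm ex}$, hence avoid $\bigcup A_i \cup \bigcup B_i \cup O^*$). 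This yields an arc-hub $(E_{\rm hub}, \{a_1,\dots,a_k\}, \{b_1,\dots,b_k\}, W_o, W_i)$ with $|E_{\rm hub}| \leq 2km + 18k(m-t_1-t_2) = O(k(k+\overline{\Delta}))$. The candidate subgraph is $D^\star := (V(D), E_{\rm abs} \cup E_{\rm hub})$.

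To verify strong $k$-arc-connectivity of $D^\star$, fix $F \subseteq E(D)$ with $|F| \leq k-1$ and $u,v \in V(D)$. Property $(\mathrm{A3'})$ gives a path in $D-F$ from $u$ to some $u^+ \in W_o$ using only edges in $E_{\rm abs}$. Since $P_1, \dots, P_k$ are edge-disjoint and $|F| \leq k-1$, some $P_t$ has $E(P_t) \cap F = \emptyset$; pick such a $t$. Property $(\mathrm{H2'})$ provides an $E_{\rm hub}$-path from $u^+$ to $a_t$ in $D-F$, then $P_t \subseteq E_{\rm abs}$ gives $a_t \to b_t$, $(\mathrm{H3'})$ gives $E_{\rm hub}$-path $b_t \to v^+$ for some $v^+ \in W_i$, and $(\mathrm{A4'})$ gives $v^+ \to v$; concatenating produces the required $u$--$v$ walk in $D^\star - F$. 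The main obstacle is the constant-tracking: one must choose $t_1, t_2, m, d, u$ so that all hypotheses of Lemmas~\ref{lem:trioexist},~\ref{lem:hubexist_arc}, and~\ref{lem:abs_arc} hold simultaneously while the sum $|E_{\rm abs}| + |E_{\rm hub}|$ is kept below $kn + 666k\overline{\Delta} + 666k^2$. This is a routine but tedious optimization using the explicit bounds supplied by those lemmas, in particular the $(5k+1)|V_{\rm ex}|$ contribution from the absorber.
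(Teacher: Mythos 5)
Your skeleton coincides with the paper's: handle small $n$ via Dalmazzo, build a trio with Lemma~\ref{lem:trioexist}, take $k$ edge-disjoint Menger paths between $\{a_1,\dots,a_k\}$ and $\{b_1,\dots,b_k\}$, feed $V_{\rm ex}=\bigcup_i A_i\cup\bigcup_i B_i\cup O^*$ and $\mathcal{P}$ into Lemma~\ref{lem:abs_arc}, feed $W_o,W_i$ into Lemma~\ref{lem:hubexist_arc}(2), and route $u\to W_o\to a_t\to b_t\to W_i\to v$ avoiding $F$; that part is sound. The genuine gap is your concrete trio parameters. With $t_1=t_2=k$, $m\approx 7k+\overline{\Delta}$, $d\geq m+5\overline{\Delta}$ and $u=d/15$, property (T9) only guarantees $|O^*|\leq \frac{2mu}{t_1}+\frac{10\overline{\Delta} m}{t_2}$, and both terms are of order $(k+\overline{\Delta})^2/k$; when $\overline{\Delta}\gg k$ this is \emph{not} $O(k+\overline{\Delta})$, so your assertion that $|O^*|=O(k+\overline{\Delta})$ ``by (T9)'' fails. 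Consequently $|V_{\rm ex}|$ can only be bounded by $\Omega(\overline{\Delta}^2/k)$, the $(5k+1)|V_{\rm ex}|$ term in Lemma~\ref{lem:abs_arc} then contributes $\Omega(\overline{\Delta}^2)$ edges, and the total exceeds $kn+666k\overline{\Delta}+666k^2$ once $\overline{\Delta}$ is a large multiple of $k$. So what remains is not ``routine constant-tracking'': with your choice of $t_1,t_2$ the scheme cannot reach the stated bound at all in the regime $\overline{\Delta}\gg k$.

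The missing idea is that $t_1$ and $t_2$ must scale with $\overline{\Delta}$: one takes $t_2\geq\overline{\Delta}$ so that the second clause of (T9) eliminates the $\frac{10\overline{\Delta} m}{t_2}$ term entirely, and $t_1=\Theta(k+\overline{\Delta})$ so that $\frac{2mu}{t_1}=O(k+\overline{\Delta})$. This is exactly what the paper does, choosing $(t_1,t_2,d,m,u)=\bigl(k+\overline{\Delta},\,\overline{\Delta},\,5k+10\overline{\Delta},\,5(k+\overline{\Delta}),\,\frac{k+2\overline{\Delta}}{3}\bigr)$, which gives $|O^*|\leq 4k+7\overline{\Delta}$ and $|A\cup B\cup O^*|\leq 57(k+\overline{\Delta})$, hence $|E_{\rm abs}|\leq kn+495k(k+\overline{\Delta})+89(k+\overline{\Delta})$ and $|E_{\rm hub}|\leq 82k(k+\overline{\Delta})$, totalling at most $kn+666k(k+\overline{\Delta})$. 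Note that enlarging $t_1,t_2$ forces $m$ up as well, since Lemma~\ref{lem:hubexist_arc}(2) needs $m>2t_1+2t_2+3k+\overline{\Delta}-2=5k+5\overline{\Delta}-2$, and one must re-check $d\geq m+5\overline{\Delta}$, $u\geq d/15$ and $n\geq 10m$; with these corrected parameters the rest of your argument goes through essentially as you wrote it.
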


Both Theorems~\ref{thm:main_1} and~\ref{thm:main_2} prove Theorem~\ref{thm:main}. Now we are ready to prove Theorem~\ref{thm:main_1}.

\begin{proof}[Proof of Theorem~\ref{thm:main_1}]

Let $D$ be a strongly $k$-connected $n$-vertex digraph with $\overline{\Delta}(D) \leq \overline{\Delta}$. For $n < 4k+3$, we have $|E(D)| \leq 2\binom{n}{2} < 16k^2 + 20k + 6 \leq 790k(k+\overline{\Delta})$. For $4k+3 \leq n < 200(k + \overline{\Delta})$, let $D'$ be a minimally strongly $k$-connected spanning subgraph of $D$. By the result of Mader~\cite{mader1985}, we have $|E(D')| \leq 2kn \leq 400k(k+\overline{\Delta}) \leq 790k(k+\overline{\Delta})$. 

We may assume that $n \geq 200(k + \overline{\Delta})$. By Lemma~\ref{lem:trioexist}, $D$ contains a 3-tuple $(\mathcal{A},\mathcal{B},O^*)$ such that
\begin{align}\label{def:trio_main_1}
\textrm{$(\mathcal{A},\mathcal{B},O^*)$ is a $\left ( k+\overline{\Delta} ,\: \overline{\Delta} ,\: 30k + 35\overline{\Delta} ,\: 5(k+\overline{\Delta}) ,\: \frac{7(k+\overline{\Delta})}{3} \right )$-trio,}
\end{align}
where $\mathcal{A}$ consists of $5(k+\overline{\Delta})$ distinct 5-indominators $\left \{(D_i , A_i , x_i , a_i ) \right \}_{i=1}^{5(k+\overline{\Delta})}$, $\mathcal{B}$ consists of $5(k+\overline{\Delta})$ distinct 5-outdominators $\left \{(D_i' , B_i , x_i' , b_i) \right \}_{i=1}^{5(k+\overline{\Delta})}$, and $|O^*| \leq 24(k+\overline{\Delta})$. 

Let $A := \bigcup_{i=1}^{5(k+\overline{\Delta})}A_i$ and $B := \bigcup_{i=1}^{5(k+\overline{\Delta})}B_i$. For $i \in [5(k+\overline{\Delta})]$, let $U_i^+ := \bigcap_{v \in A_i}N_{D_i}^+(v) \setminus \bigcup_{v \in A_i}N_{D_i}^-(v)$ and $U_i^- := \bigcap_{v \in B_i}N_{D_i'}^-(v) \setminus \bigcup_{v \in B_i}N_{D_i'}^+(v)$.

Since $|A|,|B| \leq 5 \cdot 5(k+\overline{\Delta})$ and $|O^*| < 24(k+\overline{\Delta})$, it follows that
$$ |A \cup B \cup O^*| \leq 74(k+\overline{\Delta}). $$

By Menger's theorem, let $P_1 , \dots , P_k$ be $k$ vertex-disjoint paths from $\left \{a_1 , \dots , a_k \right \}$ to $\left \{b_1 , \dots , b_k \right \}$ such that there is a permutation $\sigma : [k] \to [k]$ and for $i \in [k]$, $P_i$ is a path from $a_i$ to $b_{\sigma(i)}$. Without loss of generality, we may assume that $P_i$ is a minimal path from $a_i$ to $b_{\sigma(i)}$ for $i \in [k]$. Let $\mathcal{P} := \left \{P_1 , \dots , P_k \right \}$.

Since $|V(D)| - |A \cup B \cup O^*| \geq 200(k+\overline{\Delta}) - 74(k+\overline{\Delta}) \geq 39k + 38\overline{\Delta}$, we apply Lemma~\ref{lem:abs} so that $D$ contains a $k$-absorber 
$$\mathcal{D} := (E_{\rm abs} , A \cup B \cup O^* , \mathcal{P} , W_i , W_o)$$ 
with $W_i,W_o \subseteq V(D) \setminus (A \cup B \cup O^*)$, $|W_i|,|W_o| = 3k$, and 
\begin{align}
|E_{\rm abs}| & \leq kn + 226k(k+\overline{\Delta}) + 38(k+\overline{\Delta}) + (5k+1)|A \cup B \cup O^*| \nonumber\\
& \leq kn + 596k(k+\overline{\Delta}) + 112(k+\overline{\Delta}), \label{eqn:e_abs''}
\end{align}
since $|A \cup B \cup O^*| \leq 74(k+\overline{\Delta})$. 

Since $W_i,W_o \subseteq V(D) \setminus (A \cup B \cup O^*)$ with $|W_i|,|W_o| = 3k$ and~\eqref{def:trio_main_1}, we apply Lemma~\ref{lem:hubexist} with $3k$ playing the role of $w$. By $(\rm 2)$ of Lemma~\ref{lem:hubexist}, $D$ has a $k$-hub 
$$\mathcal{H} := (E_{\rm hub}, \left \{a_1 , \dots , a_k \right \} , \left \{b_1 , \dots , b_k \right \} , W_o , W_i)$$ 
such that 
\begin{align}\label{eqn:e_hub''}
|E_{\rm hub}| \leq 82k(k+\overline{\Delta}).
\end{align}

Let $E_L := E_{\rm abs} \cup E_{\rm hub}$. By~\eqref{eqn:e_abs''} and~\eqref{eqn:e_hub''},
\begin{align*}
|E_L| &\leq |E_{\rm abs}| + |E_{\rm hub}|\\
& \leq kn + 596k(k+\overline{\Delta}) + 82k(k+\overline{\Delta}) + 112(k+\overline{\Delta})\\
& \leq kn + 678k(k+\overline{\Delta}) + 112(k+\overline{\Delta})\\
& \leq kn + 790k(k+\overline{\Delta}).
\end{align*}

Let $D' := (V(D), E_L)$ be a spanning subgraph of $D$. Now it remains to prove that $D'$ is strongly $k$-connected. Let $S \subseteq V(D')$ with $|S| \leq k-1$ and $u,v \in V(D') \setminus S$. Let $i \in [k]$ be an index such that $V(P_i) \cap S = \emptyset$. If $u \in W_o$, then $u' := u$. Otherwise, $D'-S$ contains a path from $u$ to a vertex $u' \in W_o \setminus S$ since $\mathcal{D}$ is a $k$-absorber in $D$. Since $\mathcal{H}$ is a $k$-hub, $D'-S$ contains a path from $u'$ to $a_i$, showing that $D'-S$ contains a path from $u$ to $a_i$. Similarly, $D'-S$ contains a path from $b_{\sigma(i)}$ to $v$. Connecting from $a_i$ to $b_{\sigma(i)}$ by $P_i$, we deduce that $D'-S$ contains a path from $u$ to $v$, as desired.
\end{proof}

Now we prove Theorem~\ref{thm:main_2}, and the proof is analogous to the proof of Theorem~\ref{thm:main_1}.

\begin{proof}[Proof of Theorem~\ref{thm:main_2}]
Let $D$ be a strongly $k$-arc-connected $n$-vertex digraph with $\overline{\Delta}(D) \leq \overline{\Delta}$. For $n < 100(k + \overline{\Delta})$, let $D'$ be a minimally strongly $k$-arc-connected spanning subgraph of $D$. By the result of Dalmazzo~\cite{dalmazzo1977}, we have $|E(D')| \leq 2kn \leq 200k(k+\overline{\Delta}) \leq 666k(k+\overline{\Delta})$. 

We may assume that $n \geq 100(k + \overline{\Delta})$. By Lemma~\ref{lem:trioexist}, $D$ contains a 3-tuple $(\mathcal{A},\mathcal{B},O^*)$ such that
\begin{align}\label{def:trio_main_2}
\textrm{$(\mathcal{A},\mathcal{B},O^*)$ is a $\left ( k+\overline{\Delta} ,\: \overline{\Delta} ,\: 5k + 10\overline{\Delta} ,\: 5(k+\overline{\Delta}) ,\: \frac{k+2\overline{\Delta}}{3} \right )$-trio,}
\end{align}
where $\mathcal{A}$ consists of $5(k+\overline{\Delta})$ distinct 5-indominators $\left \{(D_i , A_i , x_i , a_i ) \right \}_{i=1}^{5(k+\overline{\Delta})}$, $\mathcal{B}$ consists of $5(k+\overline{\Delta})$ distinct 5-outdominators $\left \{(D_i' , B_i , x_i' , b_i) \right \}_{i=1}^{5(k+\overline{\Delta})}$, and $|O^*| \leq \frac{10k + 20\overline{\Delta}}{3} \leq 4k + 7\overline{\Delta}$. 

Let $A := \bigcup_{i=1}^{5(k+\overline{\Delta})}A_i$ and $B := \bigcup_{i=1}^{5(k+\overline{\Delta})}B_i$. For $i \in [5(k+\overline{\Delta})]$, let $U_i^+ := \bigcap_{v \in A_i}N_{D_i}^+(v) \setminus \bigcup_{v \in A_i}N_{D_i}^-(v)$ and $U_i^- := \bigcap_{v \in B_i}N_{D_i'}^-(v) \setminus \bigcup_{v \in B_i}N_{D_i'}^+(v)$.

Since $|A|,|B| \leq 5 \cdot 5(k+\overline{\Delta})$ and $|O^*| < 4k + 7\overline{\Delta}$, it follows that
$$ |A \cup B \cup O^*| \leq 57(k+\overline{\Delta}). $$

By Menger's theorem, let $P_1 , \dots , P_k$ be $k$ edge-disjoint paths from $\left \{a_1 , \dots , a_k \right \}$ to $\left \{b_1 , \dots , b_k \right \}$ such that there is a permutation $\sigma : [k] \to [k]$ where for $i \in [k]$, $P_i$ is a path from $a_i$ to $b_{\sigma(i)}$. Let $\mathcal{P} := \left \{P_1 , \dots , P_k \right \}$.

The rest of the proof is analogous to the proof of Theorem~\ref{thm:main_1}. As $|V(D)| - |A \cup B \cup O^*| \geq 100(k+\overline{\Delta}) - 57(k+\overline{\Delta}) \geq 33k + 32\overline{\Delta}$, we apply Lemma~\ref{lem:abs_arc} so that $D$ contains a $k$-arc-absorber 
$$\mathcal{D_{\rm arc}} := (E_{\rm abs} , A \cup B \cup O^* , \mathcal{P} , W_i , W_o)$$ 
with $W_i,W_o \subseteq V(D) \setminus (A \cup B \cup O^*)$, $|W_i|,|W_o| = 3k$, and 
\begin{align}
|E_{\rm abs}| & \leq kn + 210k(k+\overline{\Delta}) + 32(k+\overline{\Delta}) + (5k+1)|A \cup B \cup O^*| \nonumber\\
& \leq kn + 495k(k+\overline{\Delta}) + 89(k+\overline{\Delta}), \label{eqn:e_abs'''}
\end{align}
since $|A \cup B \cup O^*| \leq 57(k+\overline{\Delta})$.

Since $W_i,W_o \subseteq V(D) \setminus (A \cup B \cup O^*)$ with $|W_i|,|W_o| = 3k$ and~\eqref{def:trio_main_2}, we apply Lemma~\ref{lem:hubexist_arc} with $3k$ playing the role of $w$. By $(\rm 2)$ of Lemma~\ref{lem:hubexist_arc}, $D$ has a $k$-arc-hub 
$$\mathcal{H_{\rm arc}} := (E_{\rm hub}, \left \{a_1 , \dots , a_k \right \} , \left \{b_1 , \dots , b_k \right \} , W_o , W_i)$$ 
such that 
\begin{align}\label{eqn:e_hub'''}
|E_{\rm hub}| \leq 82k(k+\overline{\Delta}).
\end{align}

Let $E_L := E_{\rm abs} \cup E_{\rm hub}$. By~\eqref{eqn:e_abs'''} and~\eqref{eqn:e_hub'''},
\begin{align*}
|E_L| &\leq |E_{\rm abs}| + |E_{\rm hub}|\\
& \leq kn + 495k(k+\overline{\Delta}) + 82k(k+\overline{\Delta}) + 89(k+\overline{\Delta})\\
& \leq kn + 577k(k+\overline{\Delta}) + 89(k+\overline{\Delta})\\
& \leq kn + 666k(k+\overline{\Delta}).
\end{align*}

Let $D' := (V(D), E_L)$ be a spanning subgraph of $D$. Now it remains to prove that $D'$ is strongly $k$-arc-connected. Let $F \subseteq E(D')$ with $|F| \leq k-1$ and $u,v \in V(D')$. Let $i \in [k]$ be an index such that $E(P_i) \cap F = \emptyset$. If $u \in W_o$, then $u' := u$. Otherwise, $D'-F$ contains a path from $u$ to a vertex $u' \in W_o$ since $\mathcal{D_{\rm arc}}$ is a $k$-arc-absorber in $D$. Since $\mathcal{H_{\rm arc}}$ is a $k$-arc-hub, $D'-F$ contains a path from $u'$ to $a_i$, showing that $D'-F$ contains a path from $u$ to $a_i$. Similarly, $D'-F$ contains a path from $b_{\sigma(i)}$ to $v$. Connecting from $a_i$ to $b_{\sigma(i)}$ by $P_i$, we deduce that $D'-F$ contains a path from $u$ to $v$, as desired.
\end{proof}


\section{Concluding Remarks}\label{sec:conclude}
\subsection{Improving the upper bound}
For any integer $k \geq 1$ and a digraph $D$, let $h(k,D)$ be the minimum number of edges in a spanning subgraph $D'$ of $D$ with $\delta^+(D'),\delta^-(D') \geq k$. Bang-Jensen, Huang, and Yeo~\cite{bang2004spanning} proved that $h(k,T) \leq k|V(T)| + \frac{k(k+1)}{2}$ for every tournament $T$ with $\delta^+(T),\delta^-(T) \geq k$, and $h(k,T) \leq k|V(T)| + \frac{k(k-1)}{2}$ if the tournament $T$ is strongly $k$-arc-connected (see~\cite[Proposition 2.1]{bang2004spanning}). They also conjectured that $h(k,T)$ is equal to the minimum number of edges in a strongly $k$-arc-connected spanning subgraph of $T$, for every strongly $k$-arc-connected tournament $T$. Using the ideas of the proof of~\cite[Proposition 2.1]{bang2004spanning}, we prove the following.

\begin{PROP}\label{prop:opt}
For integers $k,n \geq 1$ and an integer $\overline{\Delta} \geq 2k-1$, $h(k,D) \leq kn + k\overline{\Delta}$ for every strongly $k$-arc-connected $n$-vertex digraph $D$ with $\overline{\Delta}(D) \leq \overline{\Delta}$.
\end{PROP}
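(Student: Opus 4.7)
The plan is to translate the problem to a minimum $k$-edge-cover question on the bipartite representation $H := BG(D)$ from Section~\ref{sec:prelim}. A spanning subgraph $D' \subseteq D$ with $\delta^+(D'), \delta^-(D') \geq k$ corresponds bijectively to an edge subset $F \subseteq E(H)$ with $d_F(x) \geq k$ for every $x \in V(H)$, so $h(k,D)$ equals the minimum size of such an $F$. Strong $k$-arc-connectivity of $D$ ensures $\delta(H) \geq k$, so the minimum $k$-edge-cover of $H$ is well-defined.

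The central step is to show $\max_{S \subseteq V^+}\bigl(|S| - |N_H(S)|\bigr) \leq \overline{\Delta}$. Suppose for contradiction that $|S| - |N_H(S)| \geq \overline{\Delta} + 1$ for some $S$, and set $T := V^- \setminus N_H(S)$ (viewing $V^+, V^-$ as copies of $V(D)$), so that $E_D(S,T) = \emptyset$. For any $u, v \in S \cap T$, the conditions $u \in S$ and $v \in T$ force $uv \notin E(D)$, and by symmetry $vu \notin E(D)$; hence $S \cap T$ is a set of pairwise non-neighbours in $D$, i.e., a clique in the complement of $UG(D)$, giving $|S \cap T| \leq \overline{\Delta} + 1$. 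Combined with $|S \cup T| \leq n$, the assumed inequality forces equality throughout: $V = S \cup T$, $|S \cap T| = \overline{\Delta} + 1$, and $V \setminus S = T \setminus (S \cap T)$. The set $V \setminus S$ is nonempty because $\delta^-(D) \geq 1$ implies $N_H(V^+) = V^-$ and hence $T \ne S$; strong $k$-arc-connectivity then yields $e_D(S, V \setminus S) \geq k \geq 1$, contradicting $E_D(S, T) = \emptyset$. By K\"onig's theorem, $\mu(H) \geq n - \overline{\Delta}$.

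To convert the matching bound into an integer $k$-edge-cover of size at most $kn + k\overline{\Delta}$, I follow the strategy of~\cite[Proposition 2.1]{bang2004spanning}, adapted from tournaments to general digraphs via $H$. Start with a maximum matching $M$ of $H$ and extend it to a minimum edge cover $C$ of size $2n - \mu(H) \leq n + \overline{\Delta}$; then augment $C$ by a subgraph $F' \subseteq H \setminus C$ that brings every vertex-degree up to at least $k$. A careful iteration of the Hall-deficiency argument above—using the hypothesis $\overline{\Delta} \geq 2k - 1$ to ensure that $H \setminus C$ has enough residual degree to admit the required $(k-1)$-edge-cover structure—produces $F'$ with $|F'| \leq (k-1)(n + \overline{\Delta})$. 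The union $C \cup F'$ is then the desired $k$-edge-cover of size at most $k(n + \overline{\Delta}) = kn + k\overline{\Delta}$.

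The hardest part will be the final construction of $F'$: the Hall-deficiency bound cleanly controls the \emph{fractional} $k$-cover (its LP value being $k(2n - \mu(H)) \leq kn + k\overline{\Delta}$), but turning this into an \emph{integer} $k$-cover with the same sharp constant requires the slack provided by $\overline{\Delta} \geq 2k - 1$. Without that slack, the residual graph $H \setminus C$ would not meet the analogous degree condition needed to repeat the argument recursively, and the resulting bound would be weaker by a factor depending on $k$.
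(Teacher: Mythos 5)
Your reduction to a minimum ``$k$-edge-cover'' of $H=BG(D)$ is sound (it is the paper's own min-flow-with-demands formulation in different clothing), and your deficiency argument is correct: the strong $k$-arc-connectivity of $D$ does force $|S|-|N_H(S)|\le\overline{\Delta}$ for all $S\subseteq V^+$, giving a minimum edge cover of size at most $n+\overline{\Delta}$ and hence the case $k=1$. The gap is exactly the passage from $1$ to $k$, which you acknowledge is the hardest part but never carry out. The proposed iteration on $H\setminus C$ has nothing to run on: your deficiency bound for $H$ used the strong $k$-arc-connectivity of $D$, and after deleting the up to $n+\overline{\Delta}$ arcs corresponding to $C$ the remaining digraph need not even be strongly connected, so no analogous deficiency bound for $H\setminus C$ follows; moreover a star centre of the cover $C$ can lose far more than one unit of degree, so even the residual minimum degree is uncontrolled. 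The parenthetical claim that the fractional $k$-cover has LP value $k(2n-\mu(H))$ is also false (the LP with edge capacities $1$ is integral on bipartite graphs, and in the example below its value exceeds $k(2n-\mu)$).

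More seriously, the abstract statement your pipeline would deliver --- that minimum degree $\ge k$ together with deficiency at most $\overline{\Delta}$ forces a $k$-edge-cover of size at most $k(n+\overline{\Delta})$, i.e.\ that the minimum $k$-cover is at most $k$ times the minimum $1$-cover --- is false. Take $k=2$, left vertices $x,a_1,\dots,a_{n-1}$, right vertices $y,b_1,\dots,b_{n-1}$, and edges $xy$, $xb_i$, $ya_i$, $a_ib_i$ for all $i$. This bipartite graph has minimum degree $2$, deficiency $0$, and minimum edge cover of size $n$, yet every (even fractional) $2$-edge-cover must contain all $3(n-1)$ edges incident with the degree-$2$ vertices $a_i,b_i$, so the minimum $2$-cover is $3(n-1)>2n$ for $n\ge 4$. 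So deficiency plus minimum degree --- the only information your argument retains about $H$ --- cannot yield the proposition; one must also use that $\overline{\Delta}(D)\le\overline{\Delta}$ makes every vertex subset $W$ induce at least $|W|(|W|-1-\overline{\Delta})/2$ edges, combined with the hypothesis $\overline{\Delta}\ge 2k-1$. That is precisely what the paper's proof does: it applies the Min-Flow Max-Demand theorem to the same network and bounds each cut via $k|V_{T,S}|-|E(D[V_{T,S}])|\le k\overline{\Delta}+k$, which is the step missing from your proposal.
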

\begin{proof}
Let $V_1 := \left \{v_1 \: : \: v \in V(D) \right \}$ and $V_2 := \left \{v_2 \: : \: v \in V(D) \right \}$ be two disjoint copies of $V(D)$. Let $\mathcal{N}$ be a network with a vertex-set $\left \{s,t \right \} \cup V_1 \cup V_2$ and an edge-set 
$$\left \{sv_1 \: : \: v \in V(D) \right \} \cup \left \{v_2 t \: : \: v \in V(D) \right \} \cup \left \{u_1 v_2 \: : \: uv \in E(D) \right \}.$$

We may assume that $s,t \notin V_1 \cup V_2$. Let $\ell : E(\mathcal{N}) \to \mathbb{R}_{\geq 0}$ be a lower bound function such that $\ell(sv_1) = \ell(v_2 t) = k$ for every $v \in V(D)$, and $\ell(e) = 0$ for the other edges $e \in E(\mathcal{N})$. Let $c : E(\mathcal{N}) \to \mathbb{R}_{\geq 0} \cup \left \{\infty \right \}$ be a capacity function such that $c(sv_1) = c(v_2 t) = \infty$ for every $v \in V(D)$ and $c(u_1 v_2)=1$ for every $uv \in E(D)$. One can easily check that the minimum $(s,t)$-flow of $\mathcal{N}$ is equal to $h(k,D)$. By Min-Flow Max-Demand Theorem (see~\cite[Theorem 4.9.1]{bang2008digraphs}), the minimum $(s,t)$-flow is equal to the maximum of $\ell(S,T)-c(T,S)$, where $\left \{S,T \right \}$ is a partition of $V(\mathcal{N})$ with $s \in S$ and $t \in T$.

Let $\left \{S,T \right \}$ be a partition of $V(\mathcal{N})$ with $s \in S$ and $t \in T$. For $A,B \in \left \{S,T \right \}$, let $V_{A,B} := \left \{v \in V(D) \: : \: v_1 \in A, \: v_2 \in B \right \}$. Then
\begin{align*}
\ell(S,T) &= k(V_{T,S} + V_{T,T}) + k(V_{S,S} + V_{T,S}) = k|V(D)| + k|V_{T,S}| - k|V_{S,T}|\\
c(T,S) &= e_D (V_{T,S} \cup V_{T,T} , V_{S,S} \cup V_{T,S}) = |E(D[V_{T,S}])| + e_D(V_{T,S} , V_{S,S}) + e_D(V_{T,T} , V_{S,S} \cup V_{T,S}).
\end{align*}

Now we aim to prove $\ell(S,T)- c(T,S) \leq kn + k\overline\Delta$. If there are at least three empty sets in $\left \{V_{S,S},V_{S,T},V_{T,S},V_{T,T} \right \}$, then it is easy to check that $\ell(S,T)-c(T,S) \leq kn$. Hence we may assume that there are at least two nonempty sets in $\left \{V_{S,S},V_{S,T},V_{T,S},V_{T,T} \right \}$. We claim that 
$$\ell(S,T)-c(T,S) \leq kn + k|V_{T,S}| - |E(D[V_{T,S}])| - k \leq kn + k \overline\Delta.$$

If $V_{S,T}=V_{T,T}=\emptyset$ then $e_D(V_{T,T},V_{S,S} \cup V_{T,S}) = 0$ and $e_D(V_{T,S},V_{S,S}) \geq k$, implying $\ell(S,T)-c(T,S) \leq kn + k|V_{T,S}| - |E(D[V_{T,S}])| - k$. If $V_{S,T} = \emptyset$ and $V_{T,T} \ne \emptyset$ then $e_D(V_{T,T} , V_{S,S} \cup V_{T,S}) \geq k$ since $D$ is strongly $k$-arc-connected. Therefore, either $|V_{S,T}| \geq 1$ or $e_D(V_{T,T} , V_{S,S} \cup V_{T,S}) \geq k$. In either case, it follows that $\ell(S,T) - c(T,S) \leq kn + k|V_{T,S}| - |E(D[V_{T,S}])| - k.$

Since $|E(D[V_{T,S}])| \geq \max (0 , |V_{T,S}|(|V_{T,S}| - 1 - \overline{\Delta}) / 2 )$, we have
\[k|V_{T,S}| - |E(D[V_{T,S}])| \leq
  \begin{cases}
k|V_{T,S}|& \text{if }|V_{T,S}| < \overline{\Delta}+1, \\
k|V_{T,S}| - \frac{|V_{T,S}|}{2}(|V_{T,S}|-\overline{\Delta}-1)& \text{otherwise}.
  \end{cases}
\]

If $|V_{T,S}| < \overline{\Delta}+1$, then $k|V_{T,S}| - |E(D[V_{T,S}])| < k\overline{\Delta} + k$ and thus $\ell(S,T) - c(T,S) < kn + k\overline{\Delta}$. Let us assume that $|V_{T,S}| \geq \overline{\Delta}+1$. Since the function $f(x) = \frac{x(2k + \overline{\Delta} + 1 - x)}{2}$ is a decreasing function for $x \geq k + \frac{\overline{\Delta} + 1}{2}$ and $|V_{T,S}| \geq \overline{\Delta}+1 \geq k + \frac{\overline{\Delta} + 1}{2}$, we have
\begin{align*}
k|V_{T,S}| - |E(D[V_{T,S}])| &\leq k|V_{T,S}| - \frac{|V_{T,S}|}{2}(|V_{T,S}|-\overline{\Delta}-1) = f(|V_{T,S}|)\\
& \leq f(\overline{\Delta}+1) = k\overline{\Delta} + k.
\end{align*}
and thus $\ell(S,T) - c(T,S) \leq kn + k|V_{T,S}| - |E(D[V_{T,S}])| - k \leq kn + k\overline{\Delta}$. This completes the proof.
\end{proof}

Since the oriented graph $G_{n_1 , n_2 , k , \overline{\Delta}}$ in Section~\ref{sec:lowerbdd} with $n = n_1 + n_2 + \overline{\Delta} + 1$ satisfies $h(k , G_{n_1 , n_2 , k , \overline{\Delta}}) \geq kn + k \overline{\Delta}$ if $\overline{\Delta} \geq 2k-1$, Proposition~\ref{prop:opt} implies that $h(k , G_{n_1 , n_2 , k , \overline{\Delta}}) = kn + k \overline{\Delta}$ when $\overline{\Delta} \geq 2k-1$. 

For $k=1$, Bang-Jensen, Huang, and Yeo~\cite[Theorem 8.3]{bang2004spanning} proved that every strongly connected $n$-vertex digraph $D$ with $\overline{\Delta}(D) \leq \overline{\Delta}$ contains a spanning strongly connected subgraph with at most $n + \overline{\Delta}$ edges. We conjecture that the multiplicative constant of $k\overline{\Delta}$ of Theorem~\ref{thm:main} can be improved to 1, which is best possible.

\begin{CONJ}\label{que:lowerbdd}
~\
\begin{itemize} 
\item[$(1)$] There is $C>0$ such that for integers $k,n \geq 1$ and $\overline{\Delta} \geq 0$, every strongly $k$-connected $n$-vertex digraph $D$ with $\overline{\Delta}(D) \leq \overline{\Delta}$ contains a strongly $k$-connected spanning subgraph with at most $kn + k\overline{\Delta} + Ck^2$ edges.

\item[$(2)$] There is $C'>0$ such that for integers $k,n \geq 1$ and $\overline{\Delta} \geq 0$, every strongly $k$-arc-connected $n$-vertex directed multigraph $D$ with $\overline{\Delta}(D) \leq \overline{\Delta}$ contains a strongly $k$-arc-connected spanning subgraph with at most $kn + k\overline{\Delta} + C'k^2$ edges.
\end{itemize}
\end{CONJ}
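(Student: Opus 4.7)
The plan is to tighten the absorber-plus-hub framework of Sections~\ref{sec:object}--\ref{sec:proof} so that essentially the only edges counted above the degree-minimum lower bound $kn + k\overline\Delta$ (provided by Proposition~\ref{prop:opt}) are the $O(k^2)$ edges used by the hub, the dominators, and the $k$ connecting paths $P_1, \dots, P_k$. The structural observation is that the factor $k\overline\Delta$ with its large constant in Theorem~\ref{thm:main} really comes from the escaper step: in Lemma~\ref{lem:escape}, the set $V_{\rm ex}'$ of ``exceptional'' vertices already has size $\Theta(k+\overline\Delta)$, and one spends $O(k|V_{\rm ex}'|)$ edges on $k$-fans from each exceptional vertex to the outside. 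To reach Conjecture~\ref{que:lowerbdd}, these escape edges should be \emph{recycled} from the degree-constrained core rather than purchased anew.

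First I would fix a spanning subgraph $D_0$ of $D$ with $\delta^+(D_0), \delta^-(D_0) \geq k$ and $|E(D_0)| \leq kn + k\overline\Delta$; the existence of $D_0$ is exactly what the min-flow/max-demand computation behind Proposition~\ref{prop:opt} yields. Next, among all such degree-minimal subgraphs, I would select one that is as ``connectivity-friendly'' as possible: for a prescribed exceptional set $V_{\rm ex}$ of size $O(k+\overline\Delta)$, I want each $v \in V_{\rm ex}$ to already admit a $k$-fan into $V(D)\setminus V_{\rm ex}$ and a $k$-fan from $V(D)\setminus V_{\rm ex}$ inside $D_0$. In other words, $D_0$ itself should serve as the escaper. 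This is plausible because the slack in the flow LP around $V_{\rm ex}$ is exactly what the escaper needs, and one should be able to reroute flow so as to concentrate ``non-forced'' edges on the boundary of $V_{\rm ex}$.

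Assuming such $D_0$ has been produced, I would then construct a trio $(\mathcal A, \mathcal B, O^*)$ of size $O(k+\overline\Delta)$ as in Lemma~\ref{lem:trioexist}, and on top of $D_0$ build a hub $\mathcal H$ as in Lemma~\ref{lem:hubexist}. The dominator edges within $\mathcal A \cup \mathcal B$ total $O(k+\overline\Delta)$, but each $a_i$ and $b_i$ already has in/out-degree $\geq k$ in $D_0$, so by choosing the dominators \emph{after} fixing $D_0$ and charging their edges against the $D_0$-edges at $a_i, b_i$, only $O(k^2)$ of them are genuinely new. The $k$ connecting paths $P_1,\dots,P_k$ between the $a_i$'s and $b_{\sigma(i)}$'s can be routed to use edges of $D_0$ except for $O(k)$ entry/exit edges per path. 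Adding $D_0$, the new hub edges, the new dominator edges, and the new $P_i$-edges gives a strongly $k$-connected spanning subgraph with at most $kn + k\overline\Delta + Ck^2$ edges for some absolute $C > 0$. The argument for part $(2)$ is parallel, using the arc-versions of the ingredients and Corollary~\ref{cor:minimal_arc}.

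The main obstacle is the recycling step: proving that one can pick a near-minimum $D_0$ that \emph{simultaneously} supports the fans required by the absorber, the dominator edges required by the trio, and the connecting paths. The LP behind Proposition~\ref{prop:opt} has many optimal integer solutions, but a generic one need not contain the particular fans one wants, and swapping one fan in typically destroys another. A promising route is a joint flow/connectivity LP that enforces both the degree constraints and $k$ vertex-disjoint paths between prescribed vertex sets, combined with a Mader-style splitting-off argument (in the spirit of Theorem~\ref{thm:mader_minimal}) to keep the edge count tight. A secondary technical issue is the regime $n = O(k+\overline\Delta)$, where the bound $kn + k\overline\Delta$ is comparable to $2kn$ and must be handled directly; in this range Mader's theorem already gives the right order of magnitude and the $Ck^2$ slack can absorb any loss.
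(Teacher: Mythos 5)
You should note first that the statement you are proving is Conjecture~\ref{que:lowerbdd}: the paper does not prove it, and only observes that it holds trivially when $\overline{\Delta} \geq n-k-1$ (via Mader) and that the related Proposition~\ref{prop:opt} settles only the degree-constrained relaxation $h(k,D)$, which carries no connectivity guarantee. So your text cannot be a proof ``by the paper's route''; it is a research plan, and you yourself identify its central step --- choosing a degree-minimal $D_0$ that simultaneously contains the escaping fans of Lemma~\ref{lem:escape}, the dominator edges, and the connecting paths --- as unproven. That step is not a technicality: a spanning subgraph with $\delta^{+},\delta^{-} \geq k$ and at most $kn + k\overline{\Delta}$ edges can be badly disconnected, and the entire difficulty of the conjecture is precisely that reconnecting it with the existing machinery costs $\Theta\bigl(k(k+\overline{\Delta})\bigr)$ additional edges rather than $O(k^2)$. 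The suggestion to reroute flow in the LP behind Proposition~\ref{prop:opt} so that ``non-forced'' edges concentrate on the boundary of $V_{\rm ex}$ is exactly the assertion that needs a proof; an optimal integral flow need not contain any prescribed $k$-fan, and no mechanism (the splitting-off idea is only named, not developed) is given for enforcing this without paying extra edges.

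There is also a concrete accounting error even granting the recycling step: in the framework you reuse, the hub, the trio and the connecting fans do \emph{not} cost only $O(k^2)$ new edges when $\overline{\Delta} \gg k$. Lemma~\ref{lem:trioexist} is applied with $m = \Theta(k+\overline{\Delta})$ dominator pairs --- condition $(\mathrm{T4})$ needs $m$ well above $k+\overline{\Delta}$ for the degree bound $\tfrac{m-k-\overline{\Delta}}{2}$ to be useful, and $(\mathrm{T9})$ forces $t_2 \geq \overline{\Delta}$, which via the hypothesis $m > 2t_1 + 2t_2 + 3k + \overline{\Delta} - 2$ of Lemma~\ref{lem:hubexist}(2) pushes $m$ to $\Theta(k+\overline{\Delta})$ --- and the resulting hub uses $2km + 6w(m-t_1-t_2) = \Theta\bigl(k(k+\overline{\Delta})\bigr)$ edges, for instance all of $E_D\bigl(\{a_1,\dots,a_k\},\{a_1,\dots,a_m\}\bigr)$ together with fans from $W_o,W_i$ to $m-t_1-t_2$ dominators. ``Charging'' these against $D_0$-edges incident to $a_i,b_i$ reduces nothing unless they are literally edges of $D_0$, and the dominators and hub are constructed by a greedy process oblivious to $D_0$, so no such containment is guaranteed. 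Consequently, even with a recycled escaper, the bound your outline yields has the same shape $kn + O\bigl(k(k+\overline{\Delta})\bigr)$ as Theorem~\ref{thm:main}; reaching $kn + k\overline{\Delta} + Ck^2$ would additionally require reducing the number of dominators (or the cost of linking to them) to a quantity independent of $\overline{\Delta}$, which is a genuinely new idea not supplied here. The proposal is a sensible direction, but it does not establish the conjecture.
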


Since Mader~\cite{mader1985} proved that every strongly $k$-connected $n$-vertex digraph contains a strongly $k$-connected spanning subgraph with at most $2kn - k(k+1)$ edges, Conjecture~\ref{que:lowerbdd} is true for $\overline{\Delta} \geq n-k-1$.


\subsection{Almost-regular spanning subgraphs}
There are many studies regarding finding spanning regular subgraphs in tournaments. One of the typical examples of spanning regular subgraphs is a union of edge-disjoint Hamiltonian cycles, and there are some results relating edge-disjoint Hamiltonian cycles and the vertex-connectivity of tournaments. Thomassen~\cite{thomassen1982} conjectured that there is a function $f : \mathbb{N} \to \mathbb{N}$ such that every strongly $f(k)$-connected tournament contains $k$ edge-disjoint Hamiltonian cycles, and K\"uhn, Lapinskas, Osthus, and Patel~\cite{kuhn2014proof} proved that $f(k) = O(k^2 (\log k)^2)$ suffices and constructed a strongly $\frac{(k-1)^2}{4}$-connected tournament with no $k$ edge-disjoint Hamiltonian cycles. Recently, Pokrovskiy~\cite{pokrovskiy2016edge} proved that $f(k) = O(k^2)$ suffices, which is asymptotically sharp. 

As a variation of the problem, one may ask the minimum $m=m(k)$ such that every strongly $mk$-connected tournament $T$ contains a spanning $k$-regular subgraph. The following lemma proves that $m \geq \frac{k+1}{2}$, and the result of Pokrovskiy~\cite{pokrovskiy2016edge} is asymptotically best possible even if we relax the condition of existence of $k$ edge-disjoint Hamiltonian cycles to the existence of spanning $k$-regular subgraph. Recall that $T_{n_1 , n_2 , k}$ is a strongly $k$-connected $(n_1+n_2+k)$-vertex tournament defined in Section~\ref{sec:lowerbdd}. We remark that an almost identical construction can be found in~\cite[Proposition 5.1]{kuhn2014proof}.

\begin{LEM}\label{lem:lowerbdd2}
Let $m,k \geq 1$ be integers. For a $(5mk+2)$-vertex tournament $T_{2mk+1 , 2mk+1 , mk}$, every spanning subgraph $D$ of $T_{2mk+1 , 2mk+1 , mk}$ with $\delta^{+}(D) , \delta^{-}(D) \geq k$ contains at least $\frac{k-2m+1}{5m}$ vertices of either in-degree or out-degree more than $k$ in $D$.
\end{LEM}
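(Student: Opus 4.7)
The construction $T_{n_1,n_2,k'}$ from Section~\ref{sec:lowerbdd} consists of a $k'$-vertex transitive tournament $T_1$, two tournaments $T_2$ and $T_3$ of sizes $n_1$ and $n_2$, and the $k'$ bypass edges $\{b_ia_i\}$ sending $V(T_3)$ back into $V(T_2)$. In our case $k'=mk$ and $n_1=n_2=2mk+1$, yet we only assume $\delta^+(D),\delta^-(D)\geq k$, not $\geq mk$. The plan is to rerun the edge-counting argument used for Proposition~\ref{prop:lowerbdd}(2) in this relaxed regime, giving a lower bound on $|E(D)|$, and then translate that bound into a count of vertices whose in- or out-degree strictly exceeds $k$. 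We may assume $k>2m-1$, since the claimed bound is otherwise nonpositive.

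For the lower bound on $|E(D)|$, let $\sigma=(v_1,\dots,v_{mk})$ be the transitive ordering of $T_1$. In $T$, the in-neighbours of $v_i$ lie in $V(T_2)\cup\{v_1,\dots,v_{i-1}\}$, so $d_D^-(v_i)\geq k$ forces $e_D(V(T_2),v_i)\geq k-i+1$ for $1\leq i\leq k$, whence $e_D(V(T_2),V(T_1))\geq k(k+1)/2$. As the only edges in $T$ from $V(T_3)$ to $V(T_2)$ are the $mk$ bypass edges, we obtain
\[
\sum_{v\in V(T_2)}d_D^+(v)-\sum_{v\in V(T_2)}d_D^-(v)\geq e_D(V(T_2),V(T_1))-mk\geq \frac{k(k-2m+1)}{2}.
\]
Combining this with $\sum_{v\in V(T_2)}d_D^-(v)\geq kn_1$ and adding the trivial lower bounds $k|V(T_1)|$ and $k|V(T_3)|$ for the out-degree sums on the other two parts yields $|E(D)|\geq kn+k(k-2m+1)/2$, where $n=5mk+2$.

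To convert this into a vertex count, note that $\sum_v d_D^+(v)=\sum_v d_D^-(v)=|E(D)|$, so $\sum_v[(d_D^+(v)-k)+(d_D^-(v)-k)]\geq k(k-2m+1)$. Only vertices in $A:=\{v\,:\,d_D^+(v)>k\text{ or }d_D^-(v)>k\}$ contribute to this sum, and each such $v$ contributes at most $(n-1)-2k=5mk-2k+1$ since $D$ is a subgraph of the tournament $T$. Hence $|A|\geq k(k-2m+1)/(5mk-2k+1)\geq (k-2m+1)/(5m)$, where the last inequality reduces to $2k\geq 1$ after cancelling the positive factor $k-2m+1$. The substantive step is the first one, which parallels the proof of Proposition~\ref{prop:lowerbdd}(2) but must be tracked carefully in the different parameter regime $k'=mk$ with $\delta^\pm(D)\geq k$; the conversion to a vertex count in the final step is routine.
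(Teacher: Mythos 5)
Your proof is correct and follows essentially the same argument as the paper's: the same bound $e_D(V(T_2),V(T_1))\ge k(k+1)/2$ coming from the transitive ordering of $T_1$, and the same accounting of the out-/in-degree imbalance on $V(T_2)$ against the $mk$ bypass edges from $V(T_3)$. The only difference is in the final packaging: the paper bounds the exceptional sets $S^+\subseteq V(T_2)$ and $S^-\subseteq V(T_3)$ separately using the maximum out-degree $5mk+1$, while you convert the global bound $|E(D)|\ge kn+\tfrac{k(k-2m+1)}{2}$ into a vertex count via the per-vertex excess cap $(n-1)-2k$; both yield the stated $\tfrac{k-2m+1}{5m}$.
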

\begin{proof}
Let $T_{2mk+1,2mk+1,mk}$ be the tournament with subtournaments $T_1$, $T_2$ and $T_3$ defined in Section~\ref{sec:lowerbdd}. 

Let $D$ be any spanning subgraph of $T$ such that $\delta^{+}(D),\delta^{-}(D) \geq k$. Let $S^+ \subseteq V(T_2)$ be the set of vertices $v$ in $V(T_2)$ such that $d_D^{+}(v) > k$. 

Since $d_T^{+}(v) \leq 5mk+1$ for any $v \in V(T_2)$ and every vertex in $V(T_1)$ has in-degree at least $k$ in $D$, it follows that $(5mk+1)|S^+| + k(2mk+1 - |S^+|) \geq \sum_{v \in V(T_2)}d_D^{+}(v)$ and $e_D(V(T_2),V(T_1))$ is at least $\frac{k(k+1)}{2}$. Hence
\begin{align*}
(5mk+1)|S^+| + k(2mk+1 - |S^+|) &\geq \sum_{v \in V(T_2)}d_D^{+}(v) \\
&\geq e_D(V(T_2),V(T_1)) - e_D(V(T_3),V(T_2)) + \sum_{w \in V(T_2)}d_D^{-}(w)\\
&\geq \frac{k(k+1)}{2} - mk + k(2mk+1),
\end{align*}
implying that $|S^+| \geq \frac{k(k+1-2m)}{2(5mk-k+1)} \geq \frac{k+1-2m}{10m}$. Let $S^- \subseteq V(T_3)$ be the set of vertices $v$ in $V(T_3)$ such that $d_D^{-}(v) > k$. Similarly, $|S^-| \geq \frac{k+1-2m}{10m}$, and it follows that $D$ contains at least $\frac{k-2m+1}{5m}$ vertices with either in-degree or out-degree more than $k$ in $D$.
\end{proof}

Rather than finding spanning regular subgraphs in semicomplete digraphs, we may consider finding \emph{almost} regular spanning subgraph (all vertices except few vertices have the same in/out-degrees) in semicomplete digraphs. Corollary~\ref{cor:tournaments} implies that every strongly $k$-connected semicomplete digraph contains a strongly $k$-connected spanning subgraph such that all vertices except for $O(k^2)$ vertices have both in-degree and out-degree exactly $k$. We conjecture the following.

\begin{CONJ}\label{que:almostreg}
~\
\begin{itemize} 
\item[$(\rm 1)$] For integers $k,n \geq 1$ and given a strongly $k$-connected semicomplete digraph $D$, there exists a set $S \subseteq V(D)$ with $|S| = O(k)$ such that there is a strongly $k$-connected spanning subgraph $D'$ of $D$ with $d_{D'}^{+}(v) = d_{D'}^{-}(v) = k$ for every $v \in V(D) \setminus S$, and $d_{D'}^{+}(w) = d_{D'}^{-}(w) = O(k)$ for every $w \in V(D)$.

\item[$(\rm 2)$] For integers $k,n \geq 1$ and given a strongly $k$-arc-connected semicomplete directed multigraph $D$, there exists a set $S \subseteq V(D)$ with $|S| = O(k)$ such that there is a strongly $k$-arc-connected spanning subgraph $D'$ of $D$ with $d_{D'}^{+}(v) = d_{D'}^{-}(v) = k$ for every $v \in V(D) \setminus S$, and $d_{D'}^{+}(w) = d_{D'}^{-}(w) = O(k)$ for every $w \in V(D)$.
\end{itemize}
\end{CONJ}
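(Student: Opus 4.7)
Corollary~\ref{cor:tournaments} already produces a strongly $k$-connected spanning subgraph with $kn+O(k^2)$ edges, and a subgraph satisfying the conjecture has the same global edge count $kn+O(k^2)$: the conjecture therefore amounts to a \emph{redistribution} claim, asking that the $O(k^2)$ excess be confined to $O(k)$ vertices, each with $O(k)$ total in/out degree. Since the edge count matches globally, the task is structural rather than extremal.

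My plan is to refine the construction behind Theorem~\ref{thm:main_1}. In that proof the ``central'' vertices---the endpoints $a_i,b_i$ of the linking paths $P_i$, the indominators $A_i$ and outdominators $B_i$ of the trio, together with the exceptional set $O^*$---already form a set $H$ of size $O(k)$ when $\overline{\Delta}=0$. I would take $S:=H$ as the exceptional vertex set of the conjecture. Then I would (i)~choose the $(\sigma,k,2k-1)$-good subgraph of Proposition~\ref{prop:order} so that each vertex $v_j$ with $j\in [2k,n-2k+1]$ has in-degree \emph{and} out-degree equal to exactly $k$---a greedy matching-style refinement of the bipartite-representation argument used in the proof of that proposition should give this; (ii)~verify that the hub edges $E_{\rm hub}$, the connector edges $E_{\rm conn}$ and the endvertices of the path edges $E_\textrm{path}$ all lie in $H$, which is automatic from the construction; and (iii)~modify the escape step so that every escape edge either lies inside $H$ or replaces one of the $k$ in/out edges selected in (i).

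The hardest step will be (iii): Lemma~\ref{lem:escape} creates $U_{\rm out}$ of size $\Omega(k|V_{\rm ex}|)=\Omega(k^2)$, and each vertex of $U_{\rm out}\setminus H$ receives fresh in-edges via a $k$-fan, pushing its in-degree above $k$. My intended fix is to replace $U_{\rm out}$ by an $O(k)$-sized absorbing set $H'\subseteq H$, routing the $k$-fans of the escaper through the hub and closing them back into $H'$. The delicate point will be preserving the absorber properties $(\rm A3)$--$(\rm A4)$ after deleting $k-1$ vertices, because non-central vertices have no slack beyond their $k$ in-edges and $k$ out-edges. A promising workaround is to generate the escape $k$-fans and the sparse linkage \emph{together}---both come from Menger's theorem---so that the escape paths reuse edges already present in the sparse linkage, adding no degree to non-central vertices. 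Part~(2) of the conjecture should follow by the same plan with Lemma~\ref{lem:escape} replaced by Lemma~\ref{lem:escape_arc} and the hub/absorber lemmas replaced by their arc-versions.
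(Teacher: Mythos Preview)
The statement you are attempting to prove is a \emph{conjecture} in the paper, not a theorem: it appears in the concluding remarks as Conjecture~\ref{que:almostreg}, introduced with ``We conjecture the following,'' and the paper offers no proof. There is therefore nothing in the paper to compare your proposal against; you are sketching an attack on an open problem.

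That said, your sketch has concrete gaps beyond the acknowledged difficulty in step~(iii). In step~(ii) you claim that the hub edges $E_{\rm hub}$ and the connector edges $E_{\rm conn}$ ``all lie in $H$''; this is false. By construction $E_{\rm conn}$ consists of short fans from the sets $U_o,U_i$ (which lie in $V(D)\setminus V_{\rm ex}'\subseteq V(D)\setminus H$) to the sinks $a_t'$ and from the sources $b_t'$, so every vertex of $U_o\cup U_i$ picks up $\Theta(k)$ additional incident edges from $E_{\rm conn}$ alone. The same happens one level up with $W_o,W_i$ and $E_{\rm hub}$. You could absorb $U_o\cup U_i\cup W_o\cup W_i$ into $S$ since they have size $O(k)$, but then the escape and absorber machinery must be re-run relative to the enlarged exceptional set, and it is not clear the process terminates without the exceptional set growing to $\Theta(k^2)$. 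Step~(i) also needs more than a ``greedy matching-style refinement'': Proposition~\ref{prop:order} only guarantees out-degree and in-degree \emph{at least} $k$, and the total edge count $kn-k+k\overline{\Delta}$ leaves $\Theta(k^2)$ excess that the proof of that proposition does not localise. Finally, your proposed fix for~(iii)---reusing sparse-linkage edges for escape fans---runs into the obstacle that escape fans must leave the set $V_{\rm ex}'$, whereas the sparse linkage on $V_{\rm out}$, $X_1$, $X_1'$ lives entirely outside $V_{\rm ex}'$; the two edge sets are disjoint by design, so there is nothing to reuse.
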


Note that the statements in Conjecture~\ref{que:almostreg} imply that $|E(D')| \leq k|V(D)| + O(k^2)$, strengthening Corollary~\ref{cor:tournaments}. By Lemma~\ref{lem:lowerbdd2}, we remark that the size $O(k)$ of $S$ cannot be improved further, since every spanning subgraph $D$ of a tournament $T_{2k+1,2k+1,k}$ with $\delta^{+}(D),\delta^{-}(D) \geq k$ contains at least $\frac{k-1}{4}$ vertices of either in-degree or out-degree more than $k$ in $D$.

\section{Acknowledgements}
The author would like to thank Sang-il Oum and Jaehoon Kim for their valuable comments and suggestions. The author would also like to thank anonymous referees for their careful reading and suggestions.


\end{document}